\numberwithin{equation}{section}
\numberwithin{figure}{section}
  \theoremstyle{plain}
  \newtheorem*{thm*}{\protect\theoremname}
  \theoremstyle{plain}
  \newtheorem*{cor*}{\protect\corollaryname}
\theoremstyle{plain}
\newtheorem{thm}{\protect\theoremname}[section]
  \theoremstyle{definition}
  \newtheorem{defn}[thm]{\protect\definitionname}
  \theoremstyle{plain}
  \newtheorem{prop}[thm]{\protect\propositionname}
  \theoremstyle{plain}
  \newtheorem{lem}[thm]{\protect\lemmaname}
  \theoremstyle{remark}
  \newtheorem{rem}[thm]{\protect\remarkname}
\theoremstyle{definition}
\newtheorem{parn}{}[subsection]
  \providecommand{\corollaryname}{Corollary}
  \providecommand{\definitionname}{Definition}
  \providecommand{\lemmaname}{Lemma}
  \providecommand{\propositionname}{Proposition}
  \providecommand{\remarkname}{Remark}
  \providecommand{\theoremname}{Theorem}
\providecommand{\theoremname}{Theorem}
\begin{document}
\author{Adrien Dubouloz, David R. Finston, and Imad Jaradat} 
\address{Adrien Dubouloz\\ CNRS\\ Institut de Math\'{e}matiques de Bourgogne\\ Universit\'{e} de Bourgogne\\ 9 Avenue Alain Savary\\ BP 47870\\ 21078 Dijon Cedex\\ France} \email{Adrien.Dubouloz@u-bourgogne.fr} 
\address{David R. Finston\\ Mathematics Department\\ Brooklyn College, CUNY\\ 2900 Bedford Avenue\\ Brooklyn, NY 11210} \email{dfinston@brooklyn.cuny.edu} 
\address{ Imad Jaradat\\ Department of Mathematical Sciences\\ New Mexico State University\\ Las Cruces, New Mexico 88003} \email{imad\_jar@nmsu.edu} \thanks{Research supported in part by NSF Grant OISE-0936691 and ANR Grant "BirPol" ANR-11-JS01-004-01.}

\title{Proper triangular $\mathbb{G}_{a}$-actions on $\mathbb{A}^{4}$
are translations}
\begin{abstract}
We describe the structure of geometric quotients for proper locally
triangulable $\mathbb{G}_{a}$-actions on locally trivial $\mathbb{A}^{3}$-bundles
over a n\oe therian normal base scheme $X$ defined over a field
of characteristic $0$. In the case where $\dim X=1$, we show in
particular that every such action is a translation with geometric
quotient isomorphic to the total space of a vector bundle of rank
$2$ over $X$. As a consequence, every proper triangulable $\mathbb{G}_{a}$-action
on the affine four space $\mathbb{A}_{k}^{4}$ over a field of characteristic
$0$ is a translation with geometric quotient isomorphic to $\mathbb{A}_{k}^{3}$. 
\end{abstract}
\maketitle

\section*{Introduction}

The study of algebraic actions of the additive group $\mathbb{G}_{a}=\mathbb{G}_{a,\mathbb{C}}$
on complex affine spaces $\mathbb{A}^{n}=\mathbb{A}_{\mathbb{C}}^{n}$
has a long history which began in 1968 with a pioneering result of
Rentschler \cite{Ren68} who established that every such action on
the plane $\mathbb{A}^{2}$ is triangular in a suitable polynomial
coordinate system. Consequently, every fixed point free $\mathbb{G}_{a}$-action
on $\mathbb{A}^{2}$ is a translation, in the sense that the geometric
quotient $\mathbb{A}^{2}/\mathbb{G}_{a}$ is isomorphic to $\mathbb{A}^{1}$
and that $\mathbb{A}^{2}$ is equivariantly isomorphic to $\mathbb{A}^{2}/\mathbb{G}_{a}\times\mathbb{G}_{a}$
where $\mathbb{G}_{a}$ acts by translations on the second factor. 

Arbitrary $\mathbb{G}_{a}$-actions turn out to be no longer triangulable
in higher dimensions \cite{Bass84}. But the question whether a fixed
point free $\mathbb{G}_{a}$-action on $\mathbb{A}^{3}$ is a translation
or not was settled affirmatively, first for triangulable actions by
Snow \cite{Snow88} in 1988, then by Deveney and the second author
\cite{DevFin94a} in 1994 under the additional assumption that the
action is proper and then in general by Kaliman \cite{Kal04} in 2004.
The argument for triangulable actions depends on their explicit form
in an appropriate coordinate system which is used to check that the
algebraic quotient $\pi:\mathbb{A}^{3}\rightarrow\mathbb{A}^{3}/\!/\mathbb{G}_{a}={\rm Spec}(\Gamma(\mathbb{A}^{3},\mathcal{O}_{\mathbb{A}^{3}})^{\mathbb{G}_{a}})$
is a geometric quotient and that $\mathbb{A}^{3}/\!/\mathbb{G}_{a}$
is isomorphic to $\mathbb{A}^{2}$. For proper actions, the properness
implies that the geometric quotient $\mathbb{A}^{3}/\mathbb{G}_{a}$,
which a priori only exists as an algebraic space, is separated whence
a scheme by virtue of Chow's Lemma. This means equivalently that the
$\mathbb{G}_{a}$-action is not only locally equivariantly trivial
in the \'etale topology but in fact locally trivial in the Zariski
topology, i.e. that $\mathbb{A}^{3}$ is covered by invariant Zariski
affine open subsets of the from $V_{i}=U_{i}\times\mathbb{G}_{a}$
on which $\mathbb{G}_{a}$ acts by translations on the second factor.
Since $\mathbb{A}^{3}$ is factorial, the open subsets $V_{i}$ can
even be chosen to be principal, which implies in turn that $\mathbb{A}^{3}/\mathbb{G}_{a}$
is a quasi-affine scheme, in fact an open subset of $\mathbb{A}^{3}/\!/\mathbb{G}_{a}\simeq\mathbb{A}^{2}$
with at most finite complement. The equality $\mathbb{A}^{3}/\mathbb{G}_{a}=\mathbb{A}^{3}/\!/\mathbb{G}_{a}$
ultimately follows by comparing Euler characteristics. Kaliman's general
proof proceeds along a completely different approach, drawing on topological
arguments to show directly that the algebraic quotient morphism $\pi:\mathbb{A}^{3}\rightarrow\mathbb{A}^{3}/\!/\mathbb{G}_{a}$
is a locally trivial $\mathbb{A}^{1}$-bundle. 

Kaliman's result can be reinterpreted as the striking fact that the
topological contractiblity of $\mathbb{A}^{3}$ is a strong enough
constraint to guarantee that a fixed point free $\mathbb{G}_{a}$-action
on it is automatically proper. This implication fails completely in
higher dimensions where non proper fixed point free $\mathbb{G}_{a}$-actions
abound, even in the case of triangular actions on $\mathbb{A}^{4}$
as illustrated by Deveney-Finston-Gehrke in \cite{DevFinGe94}. Starting
from dimension $5$, it is known that properness and triangulability
are no longer enough to imply global equivariant triviality or at
least local equivariant triviality in the Zariski topology, as shown
by examples of triangular actions on $\mathbb{A}^{5}$ with either
strictly quasi-affine geometric quotients or with geometric quotients
existing only as separated algebraic spaces constructed respectively
by Winkelmann \cite{Win90} and Deveney-Finston \cite{DevFin95}.
\\

But the question whether a proper $\mathbb{G}_{a}$-action on $\mathbb{A}^{4}$
is a translation or is at least  locally equivariantly trivial in
the Zariski topology remains open. Very little progress had been made
in the study of these actions during the last decades, and the only
existing partial results so far concern triangular actions: Deveney,
van Rossum and the second author \cite{DevFinvR04} established in
2004 that a Zariski locally equivariantly trivial triangular $\mathbb{G}_{a}$-action
on $\mathbb{A}^{4}$ is a translation. The proof depends on the finite
generation of the ring of invariants for such actions established
by Daigle-Freudenburg \cite{DaiFreu01} and exploits the very particular
structure of these rings. Incidentally, it is known in general that
local triviality for a proper action on $\mathbb{A}^{n}$ follows
from the finite generation and regularity of the ring of invariants.
But even knowing the former for triangular actions on $\mathbb{A}^{4}$,
a direct proof of the latter condition remains elusive. The second
positive result concerns a special type of triangular $\mathbb{G}_{a}$-actions
generated by derivations of $\mathbb{C}[x,y,z,u]$ of the form $r(x)\partial_{y}+q(x,y)\partial_{z}+p(x,y)\partial_{u}$
where $r(x)\in\mathbb{C}[x]$ and $p(x,y),q(x,y)\in\mathbb{C}[x,y,]$.
To insist on the fact that $p(x,y)$ belongs to $\mathbb{C}[x,y]$
and not only to $\mathbb{C}[x,y,z]$ as it would be the case for a
general triangular situation, these derivations (and the $\mathbb{G}_{a}$-actions
they generate) were named \emph{twin-triangular} in \cite{DevFin02}.
The case where $r(x)$ has simple roots was first settled in 2002
by Deveney and the second author in \emph{loc. cit.} by explicitly
computing the invariant ring $\mathbb{C}[x,y,z,u]^{\mathbb{G}_{a}}$
and investigating the structure of the algebraic quotient morphism
$\mathbb{A}^{4}\rightarrow\mathbb{A}^{4}/\!/\mathbb{G}_{a}=\mathrm{Spec}(\mathbb{C}[x,y,z_{1},z_{2}]^{\mathbb{G}_{a}})$.
The simplicity of the roots of $r(x)$ was crucial to achieve the
computation, and the generalization of the result to arbitrary twin-triangular
actions obtained in 2012 by the first two authors \cite{DubFin11}
required completely different methods which focused more on the nature
of the corresponding geometric quotients $\mathbb{A}_{\mathbb{C}}^{4}/\mathbb{G}_{a}$.
The latter a priori exist only as separated algebraic spaces and the
crucial step in \emph{loc. cit.} was to show that for twin-triangular
actions they are in fact schemes, or, equivalently that proper twin-triangular
$\mathbb{G}_{a}$-actions on $\mathbb{A}^{4}$ are not only locally
equivariantly trivial in the \'etale topology but also in the Zariski
topology. This enabled in turn the use of the aforementioned result
of Deveney-Finston-van Rossum to conclude that such actions are indeed
translations. 

One of the main obstacles to extend the above results to arbitrary
triangular actions comes from the fact that in contrast with fixed
point freeness, the property for a triangular $\mathbb{G}_{a}$-action
on $\mathbb{A}^{4}$ to be proper is in general subtle to characterize
effectively in terms of its associated locally nilpotent derivation.
A good illustration of these difficulties is given by the following
family of fixed point free $\mathbb{G}_{a}$-actions 
\[
\sigma_{r}:\mathbb{G}_{a}\times\mathbb{A}^{4}\rightarrow\mathbb{A}^{4},\;\left(t,(x,y,z,u)\right)\mapsto(x,y+tx^{2},z+2yt+x^{2}t^{2},u+(1+x^{r}z)t+x^{r}yt^{2}+\frac{1}{3}x^{r+1}t^{3})\quad r\geq1,
\]
generated by the triangular derivations $\delta_{r}=x^{2}\partial_{y}+2y\partial_{z}+(1+x^{r}z)\partial_{u}$
of $\mathbb{C}[x,y,z,u]$, which are either non proper if $r=1,2$
or translations otherwise. The fact that $\sigma_{r}$ is a translation
for every $r\geq4$ follows immediately from the observation that
$\delta_{r}$ admits the variable $s=u-x^{r-2}yz+\frac{2}{3}x^{r-4}y^{3}$
as a global slice. The case $r=3$ is slightly more complicated: one
can first observe that $\delta_{3}$ is conjugated via the triangular
change of variable $\tilde{u}=u-x^{r-2}yz$ to the twin-triangular
derivation $x^{2}\partial_{y}+2y\partial_{z}+(1-2xy^{2})\partial_{\tilde{u}}$
of $\mathbb{C}[x,y,z,\tilde{u}]$. The projection $\mathrm{pr}_{x,y,\tilde{u}}:\mathbb{A}^{4}\rightarrow\mathbb{A}^{3}$
is then equivariant for the fixed point free $\mathbb{G}_{a}$-action
on $\mathbb{A}^{3}$ generated by the triangular derivation $x^{2}\partial_{y}+(1-2xy^{2})\partial_{\tilde{u}}$
of $\mathbb{C}[x,y,\tilde{u}]$ and it descends to a locally trivial
$\mathbb{A}^{1}$-bundle $\rho:\mathbb{A}^{4}/\mathbb{G}_{a}\rightarrow\mathbb{A}^{3}/\mathbb{G}_{a}\simeq\mathbb{A}^{2}$
between the respective geometric quotients. Since $\mathbb{A}^{2}$
is affine and factorial, $\rho$ is a trivial $\mathbb{A}^{1}$-bundle
and hence the $\mathbb{G}_{a}$-action generated by $\delta_{3}$
is a translation. On the other hand, the non properness of $\sigma_{2}$
can be seen quickly via the invariant hypersurface method outlined
in \cite{DubFin11}, namely, one checks in this case by a direct computation
that the induced $\mathbb{G}_{a}$-action on the invariant hypersurface
$H=\left\{ x^{2}z=y^{2}-\frac{3}{2}\right\} \subset\mathbb{A}^{4}$
is not proper, with non separated geometric quotient $H/\mathbb{G}_{a}$
isomorphic to the product of the affine line $\mathbb{A}^{1}$ with
the affine line with a double origin. The failure of properness in
the case where $r=1$ is even more subtle to analyze since in contrast
with the previous case, the induced action on every invariant hypersurface
of the form $H_{\lambda}=\left\{ x^{2}z=y^{2}-\lambda\right\} $,
$\lambda\in\mathbb{C}$, turns out to be proper. Going back to the
definition of the properness for the action $\sigma_{1}$, which says
that the morphism $\Phi=(\mathrm{pr}_{2},\sigma_{1}):\mathbb{G}_{a}\times\mathbb{A}^{4}\rightarrow\mathbb{A}^{4}\times\mathbb{A}^{4}$
is proper, one can argue that the union of the following sequence
of points 
\[
(p_{n},q_{n})=(p_{n};\mu_{1}(\sqrt{n^{3}},p_{n}))=((\frac{\sqrt[3]{6}}{n},-\frac{\sqrt[3]{36}}{2\sqrt{n}},\frac{1}{\sqrt[3]{6}\sqrt{n}},0);(\frac{\sqrt[3]{6}}{n},\frac{\sqrt[3]{36}}{2\sqrt{n}},\frac{1}{\sqrt[3]{6}\sqrt{n}},1))\in\mathbb{A}^{4}\times\mathbb{A}^{4},\quad n\in\mathbb{N}
\]
and its limit $(p_{\infty};q_{\infty})=(p_{\infty},\mu_{1}(1,p_{\infty}))=\left((0,0,0,0);(0,0,0,1)\right)$
is a compact subset of $\mathbb{A}^{4}\times\mathbb{A}^{4}$ equipped
with the analytic topology whose inverse image by $\Phi$ is unbounded.
So $\Phi$ is not proper as an analytic map between the corresponding
varieties equipped with their respective underlying structures of
analytic manifolds and hence is not proper in the algebraic category
either.\\

In this article, we reconsider proper triangular actions on $\mathbb{A}^{4}$
in broader framework and we develop new techniques to overcome the
above difficulties. These enable in turn to completely solve the question
of global equivariant triviality for such actions. Since a triangular
$\mathbb{G}_{a}$-action on $\mathbb{A}^{4}=\mathrm{Spec}(\mathbb{C}[x,y,z,u])$
preserves the variable $x$, it can be considered as an action of
the additive group scheme $\mathbb{G}_{a,\mathbb{C}[x]}=\mathbb{G}_{a}\times_{\mathrm{Spec}(\mathbb{C})}\mathrm{Spec}(\mathbb{C}[x])$
on the affine $3$-space $\mathbb{A}_{\mathbb{C}[x]}^{3}$ over $\mathrm{Spec}(\mathbb{C}[x])$
so that the setup is in fact $3$-dimensional over a parameter space.
The properties for a $\mathbb{G}_{a,\mathbb{C}[x]}$-action on $\mathbb{A}_{\mathbb{C}[x]}^{3}$
to be proper or triangulable being both local on the parameter space,
a cost free generalization is obtained by replacing $\mathrm{Spec}(\mathbb{C}[x])$
by an arbitrary n\oe therian normal scheme $X$ defined over a field
of characteristic zero and the trivial $\mathbb{A}^{3}$-bundle $\mathrm{pr}_{x}:\mathbb{A}_{\mathbb{C}[x]}^{3}\rightarrow\mathrm{Spec}(\mathbb{C}[x])$
of $\mathrm{Spec}(\mathbb{C}[x])$ by a Zariski locally trivial $\mathbb{A}^{3}$-bundle
$\pi:E\rightarrow X$. Our main result then reads as follows: 
\begin{thm*}
Let $X$ be a n\oe therian normal scheme defined over a field of
characteristic zero, let $\pi:E\rightarrow X$ be a Zariski locally
trivial $\mathbb{A}^{3}$-bundle equipped with a proper locally triangulable
$\mathbb{G}_{a,X}$-action and let $\mathrm{p}:\mathfrak{X}=E/\mathbb{G}_{a,X}\rightarrow X$
be the geometric quotient taken in the category of algebraic $X$-spaces.
Then there exists an open sub-scheme $U$ of $X$ with $\mathrm{codim}_{X}(X\setminus U)\geq2$
such that $\mathfrak{X}_{U}=\mathrm{p}^{-1}(U)\rightarrow U$ has
the structure of a Zariski locally trivial $\mathbb{A}^{2}$-bundle. 
\end{thm*}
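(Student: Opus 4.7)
The plan is to work Zariski-locally on $X$ and reduce the triangular action on $E$ to two successive one-step extensions provided by its triangular form. Since the conclusion allows the excision of a closed subset of $X$ of codimension $\geq 2$, I may replace $X$ by an affine open $\mathrm{Spec}(A)$ over which $\pi$ trivializes and the action is globally triangular: $E=\mathrm{Spec}(A[y,z,u])$ with $A$ noetherian normal, and the action is generated by
\[
\delta=a\,\partial_{y}+b(y)\,\partial_{z}+c(y,z)\,\partial_{u},\qquad a\in A,\ b(y)\in A[y],\ c(y,z)\in A[y,z].
\]
Fixed-point freeness, implied by properness, forces the ideal $(a,b(y),c(y,z))$ to be the unit ideal of $A[y,z,u]$; normality of $X$ then lets me control, after removing a codimension $\geq 2$ closed subset, the divisorial behavior of $a$ and of the leading coefficients of $b(y)$ and $c(y,z)$.

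Next I factor the action through an intermediate $2$-dimensional quotient. The subring $A[y,z]\subset A[y,z,u]$ is $\delta$-stable, and the restricted derivation $\delta_{0}=a\,\partial_{y}+b(y)\,\partial_{z}$ generates a $\mathbb{G}_{a,X}$-action on $Y:=X\times\mathbb{A}^{2}$ for which the projection $\tau\colon E\to Y$ is equivariant. Passing to geometric quotients yields an $X$-morphism $\bar\tau\colon\mathfrak{X}\to\mathfrak{Y}:=Y/\mathbb{G}_{a,X}$, and the goal becomes to show, over a suitable open $U\subseteq X$ with $\mathrm{codim}_{X}(X\setminus U)\geq 2$, that both $\mathfrak{Y}_{U}\to U$ and $\bar\tau_{U}\colon\mathfrak{X}_{U}\to\mathfrak{Y}_{U}$ are Zariski locally trivial $\mathbb{A}^{1}$-bundles; composing them produces the desired Zariski locally trivial $\mathbb{A}^{2}$-bundle structure on $\mathfrak{X}_{U}\to U$. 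The first item I expect to handle as a Rentschler-type statement for the relative $2$-dimensional action: on the Zariski open where $a\neq 0$, $y/a$ furnishes a slice after inverting $a$, identifying $\mathfrak{Y}$ with $\mathrm{Spec}(A[w])$ for an explicit invariant $w\in A[1/a][y,z]$; on the vanishing locus of $a$, normality of $X$ and the codimension $\geq 2$ flexibility permit a re-triangulation promoting a suitable coefficient of $b(y)$ to the new leading term.

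The second item is where properness intervenes decisively, and I expect it to be the main obstacle. The fibration $\bar\tau$ inherits from $\tau$ the structure of an $\mathbb{A}^{1}$-fibration of relative dimension one, but Zariski local triviality requires substantially more than fixed-point freeness: the family $\sigma_{r}$ discussed in the Introduction shows that non-proper fixed-point-free triangular actions on $\mathbb{A}^{4}$ exist for which the analogous $\mathbb{A}^{1}$-fibrations fail to be separated. Properness is used to upgrade $\bar\tau$ to an equivariant $\mathbb{G}_{a}$-torsor over an open of $\mathfrak{Y}$, hence to a Zariski locally trivial $\mathbb{A}^{1}$-bundle, and normality of $X$ is then invoked to extend the structure across codimension $1$ boundaries so that only codimension $\geq 2$ loci need be removed. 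I anticipate the technical crux to be the patching of the local $\mathbb{A}^{1}$-bundle structures across the divisors where the initial triangular coordinates degenerate, requiring a careful analysis of divisor classes on $\mathfrak{Y}$ together with a relative form of the quasi-affineness criterion for $\bar\tau\colon\mathfrak{X}\to\mathfrak{Y}$.
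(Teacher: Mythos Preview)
Your proposal has a genuine gap at its starting point: the intermediate action $\delta_{0}=a\,\partial_{y}+b(y)\,\partial_{z}$ on $Y=X\times\mathbb{A}^{2}$ is \emph{not} fixed-point free in general. Fixed-point freeness of $\delta$ on $E$ only gives $(a,b(y),c(y,z))=A[y,z,u]$, and this does not force $(a,b(y))=A[y,z]$. Over a DVR with uniformizer $x$, the typical shape (after the paper's preliminary reductions) is $a=x^{n}$ with $n>0$ and $b(y)\equiv \overline{q}(y)\pmod{x}$ a nonconstant polynomial in $\kappa[y]$; then $\delta_{0}$ vanishes along the closed subscheme $\{x=0,\ \overline{q}(y)=0\}$, which meets every fibre over the closed point of $S$. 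Consequently $\mathfrak{Y}=Y/\mathbb{G}_{a,X}$ does not exist as an algebraic space in any useful sense: it is only the quotient \emph{stack} $[\mathbb{A}^{2}_{S}/\mathbb{G}_{a,S}]$, as the paper itself remarks in the Introduction. Your map $\bar\tau:\mathfrak{X}\to\mathfrak{Y}$ is therefore not a morphism of spaces to which ``Zariski locally trivial $\mathbb{A}^{1}$-bundle'' can be applied, and the Rentschler-type step you propose for $\mathfrak{Y}_{U}\to U$ has no target. The only situation in which your two-step factorization works directly is precisely the easy case treated at the start of \S\ref{sub:Reduction-to-Affineness}, where $\overline{q}$ is a nonzero constant so that $\delta_{0}$ \emph{is} fixed-point free; the paper disposes of that case in a paragraph.

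The paper's route is essentially orthogonal to yours. After localizing at codimension~$1$ points of $X$ (so $A$ is a DVR), it does not attempt to realize $\mathfrak{X}$ as a bundle over an intermediate two-variable quotient. Instead it first proves a structural constraint on the derivation itself: via a ``$\sharp$-reduction'' and an explicit non-properness criterion (computing the closure of the graph of the action in $\mathbb{P}^{1}_{S}\times\mathbb{A}^{3}_{S}\times\mathbb{A}^{3}_{S}$), it shows that a \emph{proper} triangular derivation must be conjugate to a \emph{twin}-triangular one, i.e.\ with $c(y,z)=c(y)$ independent of $z$ (Proposition~\ref{prop:Main-Local}). Only then, exploiting the symmetry between the two projections $\mathrm{pr}_{y,z_{+}}$ and $\mathrm{pr}_{y,z_{-}}$, does it build two explicit invariant principal open subsets of $\mathbb{A}^{3}_{S}$ whose quotients are shown to be affine via an \'etale Galois base change and a direct gluing argument (Section~\ref{sec:Twin-Triviality}). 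Properness enters not to make a pre-existing $\mathbb{A}^{1}$-fibration separated, but first to force twin-triangulability and then, in the endgame, to guarantee that certain transition functions $f_{\overline g,\overline g'}\in\tilde R_{x}$ have poles of strictly positive order, which is exactly what makes the glued scheme affine. Your outline would need to replace its middle section entirely with an argument of this kind.
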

Note in particular that since in the original problem, the base $X=\mathrm{Spec}(\mathbb{C}[x])$
is $1$-dimensional, this Theorem and an appeal to the aforementioned
result \cite{DevFinvR04} are enough to settle the question for $\mathbb{A}_{\mathbb{C}}^{4}$.
The conclusion of the above Theorem is essentially optimal. Indeed,
in the example due to Winkelmann \cite{Win90}, one has $X=\mathrm{Spec}(\mathbb{C}[x,y])$,
$\pi=\mathrm{pr}_{x,y}:\mathbb{A}_{X}^{3}=\mathrm{Spec}(\mathbb{C}[x,y][u,v,w])\rightarrow X$
equipped with the proper triangular $\mathbb{G}_{a,X}$-action generated
by the $\mathbb{C}[x,y]$-derivation $\partial=x\partial_{u}+y\partial_{v}+(1+xv-yu)\partial_{w}$
of $\mathbb{C}[x,y][u,v,w]$, and the geometric quotient $\mathrm{p}:\mathfrak{X}=\mathbb{A}_{X}^{3}/\mathbb{G}_{a,X}\rightarrow X$
is the strictly quasi-affine complement of the closed subset $\left\{ x=y=z=0\right\} $
in the $4$-dimensional smooth affine quadric $Q\subset\mathbb{A}_{X}^{3}$
with equation $xt_{2}+yt_{1}=z(z+1)$. The structure morphism $\mathrm{p}:\mathfrak{X}\rightarrow X$
is easily seen to be an $\mathbb{A}^{2}$-fibration, which restricts
to a locally trivial $\mathbb{A}^{2}$-bundle over the open subset
$U=X\setminus\{(0,0)\}$. However, there is no Zariski or \'etale
open neighborhood of the origin $(0,0)\in X$ over which $\mathrm{p}:\mathfrak{X}\rightarrow X$
restricts to a trivial $\mathbb{A}^{2}$-bundle for otherwise $\mathrm{p}:\mathfrak{X}\rightarrow X$
would be an affine morphism and so $\mathfrak{X}$ would be an affine
scheme. The situation for the $\mathbb{C}[x,y]$-derivation $\partial=x\partial_{u}+y\partial_{v}+(1+xv^{2})\partial_{w}$
of $\mathbb{C}[x,y][u,v,w]$ constructed by Deveney-Finston \cite{DevFin95}
is very similar: here the geometric quotient $\mathfrak{X}=\mathbb{A}_{X}^{3}/\mathbb{G}_{a,X}$
is a separated algebraic space which is not a scheme and the structure
morphism $\mathrm{p}:\mathfrak{X}\rightarrow X$ is again an $\mathbb{A}^{2}$-fibration
restricting to a Zariski locally trivial $\mathbb{A}^{2}$-bundle
over $U=X\setminus\{(0,0)\}$ but whose restriction to any Zariski
or \'etale open neighborhood of the origin $(0,0)\in X$ is nontrivial.
\\

\noindent In contrast, in the case of a $1$-dimensional affine base,
we can immediately derive the following Corollaries: 
\begin{cor*}
Let $\pi:E\rightarrow S$ be a rank $3$ vector bundle over an affine
Dedekind scheme $S=\mathrm{Spec}(A)$ defined over a field $k$ of
characteristic $0$. Then every proper locally triangulable $\mathbb{G}_{a,S}$-action
on $E$ is equivariantly trivial with geometric quotient $E/\mathbb{G}_{a,S}$
isomorphic to a vector bundle of rank $2$ over $S$, stably isomorphic
to $E$. \end{cor*}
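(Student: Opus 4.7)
The plan is to apply the main Theorem directly with $X=S$, and then to upgrade the resulting $\mathbb{A}^{2}$-bundle on the quotient to a genuine vector bundle by appealing to the theorem of Bass--Connell--Wright.

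First, because $S$ is a Dedekind scheme, hence of dimension one, any open subscheme $U\subseteq S$ satisfying $\mathrm{codim}_{S}(S\setminus U)\geq 2$ coincides with $S$; the main Theorem therefore yields that $\mathrm{p}:\mathfrak{X}=E/\mathbb{G}_{a,S}\rightarrow S$ is globally a Zariski locally trivial $\mathbb{A}^{2}$-bundle. In particular $\mathrm{p}$ is an affine morphism to the affine base $S$, so $\mathfrak{X}$ is itself an affine scheme. The quotient morphism $E\to\mathfrak{X}$ is a principal $\mathbb{G}_{a}$-bundle (properness of the action in characteristic zero forces it to be free), classified by $H^{1}(\mathfrak{X},\mathcal{O}_{\mathfrak{X}})=0$; so it is trivial. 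Any global section then yields a $\mathbb{G}_{a,S}$-equivariant $S$-isomorphism $E\cong\mathfrak{X}\times_{S}\mathbb{A}^{1}_{S}$, with $\mathbb{G}_{a,S}$ acting by translation on the second factor. This establishes the equivariant triviality.

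Second, I invoke the Bass--Connell--Wright theorem: a Zariski locally polynomial algebra of relative dimension $n$ over a Noetherian ring is the symmetric algebra of a finitely generated projective module of rank $n$. Applied to $\mathrm{p}_{*}\mathcal{O}_{\mathfrak{X}}$, it produces a locally free $\mathcal{O}_{S}$-module $\mathcal{V}$ of rank $2$ together with an $S$-isomorphism $\mathfrak{X}\cong\mathrm{Spec}_{S}\mathrm{Sym}_{\mathcal{O}_{S}}(\mathcal{V}^{\vee})$. Hence the geometric quotient is isomorphic to the total space of a rank $2$ vector bundle over $S$.

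Third, combining these two steps gives $E\cong\mathrm{Spec}_{S}\mathrm{Sym}_{\mathcal{O}_{S}}(\mathcal{V}^{\vee}\oplus\mathcal{O}_{S})$ as an $S$-scheme. Writing $E=\mathrm{Spec}_{S}\mathrm{Sym}_{\mathcal{O}_{S}}(\mathcal{E}^{\vee})$ with $\mathcal{E}$ the rank $3$ locally free sheaf associated with the vector bundle $E$, the comparison of the sheaves of relative differentials under this $S$-isomorphism yields $\pi^{*}\mathcal{E}^{\vee}\cong\pi^{*}(\mathcal{V}^{\vee}\oplus\mathcal{O}_{S})$ on the common underlying scheme. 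Since $\pi:E\to S$ admits a section (for instance the zero section of the vector bundle structure $E=\mathrm{Spec}_{S}\mathrm{Sym}_{\mathcal{O}_{S}}(\mathcal{E}^{\vee})$), the functor $\pi^{*}$ is injective on isomorphism classes of locally free sheaves, giving $\mathcal{E}^{\vee}\cong\mathcal{V}^{\vee}\oplus\mathcal{O}_{S}$ and hence the desired stable isomorphism between $E$ and the geometric quotient.

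The main obstacle, in my view, lies in step two: the appeal to Bass--Connell--Wright is the essential non-formal input needed to promote the Zariski locally trivial $\mathbb{A}^{2}$-bundle furnished by the main Theorem into a bona fide rank $2$ vector bundle. Once this is granted, steps one and three reduce to formal manipulations with $\mathbb{G}_{a}$-torsors and sheaves of relative differentials over the affine Dedekind base.
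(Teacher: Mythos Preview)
Your proof is correct and follows essentially the same approach as the paper: apply the Main Theorem on the one-dimensional base to get a Zariski locally trivial $\mathbb{A}^{2}$-bundle, invoke Bass--Connell--Wright \cite{BCW77} to upgrade it to a rank~$2$ vector bundle, and use affineness of the quotient to trivialize the $\mathbb{G}_{a}$-torsor. The only notable difference is that the paper simply asserts the final isomorphism $E\simeq E/\mathbb{G}_{a,S}\times_{S}\mathbb{A}_{S}^{1}$ holds ``as vector bundles over $S$'' without further comment, whereas you supply an explicit justification via relative differentials and pullback along a section---a genuine improvement in rigor, since an $S$-scheme isomorphism between total spaces of vector bundles does not tautologically yield an isomorphism of the underlying locally free sheaves.
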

\begin{proof}
By the previous Theorem, the geometric quotient $\mathrm{p}:E/\mathbb{G}_{a,S}\rightarrow S$
has the structure of a Zariski locally trivial $\mathbb{A}^{2}$-bundle,
hence is a vector bundle of rank $2$ by \cite{BCW77}. In particular,
$E/\mathbb{G}_{a,S}$ is affine which implies in turn that $\rho:E\rightarrow E/\mathbb{G}_{a,S}$
is a trivial $\mathbb{G}_{a,S}$-bundle. So $E\simeq E/\mathbb{G}_{a,S}\times_{S}\mathbb{A}_{S}^{1}$
as vector bundles over $S$. \end{proof}
\begin{cor*}
Let $S=\mathrm{Spec}(A)$ be an affine Dedekind scheme defined over
a field of characteristic $0$. Then every proper triangular $\mathbb{G}_{a,S}$-action
on $\mathbb{A}_{S}^{3}$ is a translation. \end{cor*}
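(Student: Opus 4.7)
The plan is to deduce this Corollary directly from the previous one together with the Steinitz classification of projective modules over a Dedekind domain. Since a triangular $\mathbb{G}_{a,S}$-action on $\mathbb{A}_{S}^{3}$ is tautologically locally triangulable, and $\mathbb{A}_{S}^{3}$ is a rank $3$ vector bundle over $S$, the hypotheses of the preceding Corollary apply. The first step is to invoke it, which yields both an equivariant $S$-isomorphism $\mathbb{A}_{S}^{3}\simeq V\times_{S}\mathbb{A}_{S}^{1}$ and, as part of its proof, the fact that the rank $2$ geometric quotient $V=\mathbb{A}_{S}^{3}/\mathbb{G}_{a,S}$ is stably trivial, i.e.\ $V\oplus\mathcal{O}_{S}\simeq\mathcal{O}_{S}^{3}$ as vector bundles over $S$.

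The second and final step is to upgrade this stable triviality to an actual trivialization of $V$, and it is here (and only here) that the $1$-dimensionality of $S$ is essential. By Steinitz's theorem, every rank $r$ vector bundle $P$ on an affine Dedekind scheme $S=\mathrm{Spec}(A)$ splits as $\mathcal{O}_{S}^{r-1}\oplus\det(P)$, so its isomorphism class is entirely determined by $\det(P)\in\mathrm{Pic}(S)$. Taking determinants in $V\oplus\mathcal{O}_{S}\simeq\mathcal{O}_{S}^{3}$ gives $\det(V)\simeq\mathcal{O}_{S}$, and hence $V\simeq\mathbb{A}_{S}^{2}$. Combining this with the equivariant splitting above produces the equivariant isomorphism $\mathbb{A}_{S}^{3}\simeq\mathbb{A}_{S}^{2}\times_{S}\mathbb{G}_{a,S}$, which is by definition a translation.

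No serious obstacle should arise in this argument: all of the delicate geometric content has already been packaged into the main Theorem and its first Corollary, and the only additional input is the purely algebraic Steinitz theorem. The sole point to verify carefully is that the stable isomorphism produced by the preceding Corollary is genuinely one of vector bundles rather than merely of $S$-schemes, but this is built into its proof, in which $V$ appears as a vector bundle summand of $\mathbb{A}_{S}^{3}$.
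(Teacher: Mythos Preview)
Your proof is correct and follows essentially the same approach as the paper: invoke the previous Corollary to obtain that the geometric quotient $V=\mathbb{A}_{S}^{3}/\mathbb{G}_{a,S}$ is a stably trivial rank $2$ vector bundle over the Dedekind scheme $S$, and then conclude $V\simeq\mathbb{A}_{S}^{2}$. The only cosmetic difference is that the paper cites \cite[IV 3.5]{Bas68} for the passage from stably trivial to trivial, whereas you spell this out via the Steinitz classification and a determinant computation; these are equivalent inputs.
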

\begin{proof}
By the previous Corollary, $\mathbb{A}_{S}^{3}/\mathbb{G}_{a,S}$
is a stably trivial vector bundle of rank $2$ over $S$, whence is
isomorphic to the trivial bundle $\mathbb{A}_{S}^{2}$ over $S$ by
virtue of \cite[IV 3.5]{Bas68}. 
\end{proof}
\noindent Coming back to the original problem for triangular $\mathbb{G}_{a,k}$-actions
on $\mathbb{A}_{k}^{4}$, the previous Corollary does in fact eliminate
the need for \cite{DevFinvR04} hence the dependency on the fact that
the corresponding rings of invariants are finitely generated: 
\begin{cor*}
If $k$ is a field of characteristic $0$, then every proper triangular
$\mathbb{G}_{a,k}$-action on $\mathbb{A}_{k}^{4}$ is a translation.\end{cor*}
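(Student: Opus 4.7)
The plan is to deduce this Corollary directly from the preceding Corollary by treating the affine line $\mathrm{Spec}(k[x])$ as a parameter space, thereby recasting the triangular $\mathbb{G}_{a,k}$-action on $\mathbb{A}_{k}^{4}$ as a triangular $\mathbb{G}_{a,S}$-action on $\mathbb{A}_{S}^{3}$ for $S=\mathrm{Spec}(k[x])$. The first step is the observation (already flagged in the introduction) that for any triangular derivation $\delta$ of $k[x,y,z,u]$ one has $\delta(x)=0$, so $\delta$ is automatically $k[x]$-linear. Consequently, writing $S=\mathrm{Spec}(k[x])\simeq\mathbb{A}_{k}^{1}$, which is an affine Dedekind scheme over the characteristic zero field $k$, the associated $\mathbb{G}_{a,k}$-action on $\mathbb{A}_{k}^{4}=\mathrm{Spec}(k[x][y,z,u])$ is naturally a $\mathbb{G}_{a,S}$-action on the trivial $\mathbb{A}^{3}$-bundle $\mathbb{A}_{S}^{3}\to S$, still triangular because the triangular form of $\delta$ on $k[x,y,z,u]$ is exactly the triangular form of the corresponding $\mathcal{O}_{S}$-derivation of $k[x][y,z,u]$.

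The second step is to check that properness transfers under this reinterpretation. Denoting by $\Phi:\mathbb{G}_{a,k}\times_{k}\mathbb{A}_{k}^{4}\to\mathbb{A}_{k}^{4}\times_{k}\mathbb{A}_{k}^{4}$ the morphism $(t,p)\mapsto(p,t\cdot p)$ associated to the $\mathbb{G}_{a,k}$-action, and by $\Phi_{S}:\mathbb{G}_{a,S}\times_{S}\mathbb{A}_{S}^{3}\to\mathbb{A}_{S}^{3}\times_{S}\mathbb{A}_{S}^{3}$ its analogue for the relative action, one has a canonical identification $\mathbb{G}_{a,S}\times_{S}\mathbb{A}_{S}^{3}=\mathbb{G}_{a,k}\times_{k}\mathbb{A}_{k}^{4}$, and $\Phi$ factors as $\Phi_{S}$ followed by the closed immersion $\mathbb{A}_{S}^{3}\times_{S}\mathbb{A}_{S}^{3}\hookrightarrow\mathbb{A}_{k}^{4}\times_{k}\mathbb{A}_{k}^{4}$ cut out by equality of the two $x$-coordinates. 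Since closed immersions are separated and $\Phi$ is proper by hypothesis, the standard cancellation property yields that $\Phi_{S}$ is itself proper, so the relative action is a proper triangular $\mathbb{G}_{a,S}$-action on $\mathbb{A}_{S}^{3}$.

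At this point there is no further obstacle: the preceding Corollary applies to this relative action to give that it is a translation, with geometric quotient isomorphic to $\mathbb{A}_{S}^{2}\simeq\mathbb{A}_{k}^{3}$, whence $\mathbb{A}_{k}^{4}\simeq\mathbb{A}_{k}^{3}\times_{k}\mathbb{G}_{a,k}$ equivariantly, as required. All the substantive work has been absorbed into the main Theorem and its two preceding Corollaries; the only verification requiring any care in the present reduction is the transfer of properness from the absolute to the relative setting, which is handled by the closed immersion observation above.
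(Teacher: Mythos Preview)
Your proof takes essentially the same approach as the paper's and is correct in substance; your explicit verification that properness transfers from the absolute to the relative setting (via the closed-immersion factorization $\mathbb{A}_{S}^{3}\times_{S}\mathbb{A}_{S}^{3}\hookrightarrow\mathbb{A}_{k}^{4}\times_{k}\mathbb{A}_{k}^{4}$) is in fact more careful than the paper's, which simply asserts it.

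One small slip: for a general triangular derivation $\delta$ of $k[x,y,z,u]$ one has $\delta(x)\in k$, not necessarily $\delta(x)=0$. The case $\delta(x)=c\neq 0$ is trivial since $c^{-1}x$ is then a global slice and the action is already a translation; the paper handles this by writing ``we may assume'' that $\partial$ has no $\partial_{x}$-term. You should add the same caveat rather than asserting $\delta(x)=0$ outright.
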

\begin{proof}
Letting $\mathbb{A}_{k}^{4}=\mathrm{Spec}(k[x,y,z,u])$, we may assume
that the action is generated by a $k$-derivation of the form $\partial=r(x)\partial_{y}+q(x,y)\partial_{z}+p(x,y,z)\partial_{u}$.
As explained above, the latter can be considered as a triangular $k[x]$-derivation
of $k[x][y,z,u]$ generating a proper $\mathbb{G}_{a,k[x]}$-action
on $\mathbb{A}_{k}^{4}=\mathbb{A}_{k[x]}^{3}$ which is, by the previous
Corollary, a trivial $\mathbb{G}_{a}$-bundle over its geometric quotient
$\mathbb{A}_{k}^{4}/\mathbb{G}_{a,k}\simeq\mathbb{A}_{k[x]}^{3}/\mathbb{G}_{a,k[x]}\simeq\mathbb{A}_{k[x]}^{2}\simeq\mathbb{A}_{k}^{3}$. 
\end{proof}
Let us now briefly explain the general philosophy behind the proof.
After localizing at codimension $1$ points of $X$, the Main Theorem
reduces to the statement that a proper $\mathbb{G}_{a,S}$-action
$\sigma:\mathbb{G}_{a,S}\times_{S}\mathbb{A}_{S}^{3}\rightarrow\mathbb{A}_{S}^{3}$
on the affine affine space $\mathbb{A}_{S}^{3}=\mathrm{Spec}(A[y,z,u])$
over the spectrum of a discrete valuation ring, generated by a triangular
$A$-derivation $\partial=a\partial_{y}+q(y)\partial_{z}+p(y,z)\partial_{u}$
of $A[y,z,u]$, where $a\in A\setminus\left\{ 0\right\} $, $q(y)\in A[y]$
and $p(y,z)\in A[y,z]$, is a translation. Triangularity immediately
implies that the restriction of $\sigma$ to the generic fiber of
$\mathrm{pr}_{S}:\mathbb{A}_{S}^{3}\rightarrow S$ is a translation
with $a^{-1}y$ as a global slice. This reduces the problem to the
study of neighborhoods of points of the geometric quotient $\mathfrak{X}=\mathbb{A}_{S}^{3}/\mathbb{G}_{a,S}$
supported on the closed fiber of the structure morphism $\mathrm{p}:\mathfrak{X}\rightarrow S$.
A second feature of triangularity is that $\sigma$ commutes with
the action $\tau:\mathbb{G}_{a,S}\times_{S}\mathbb{A}_{S}^{3}\rightarrow\mathbb{A}_{S}^{3}$
generated by the $A$-derivation $\partial_{u}$ which therefore descends
to a $\mathbb{G}_{a,S}$-action $\overline{\tau}$ on the geometric
quotient $\mathfrak{X}=\mathbb{A}_{S}^{3}/\mathbb{G}_{a,S}$. On the
other hand, $\sigma$ descends via the projection $\mathrm{pr}_{y,z}:\mathbb{A}_{S}^{3}\rightarrow\mathbb{A}_{S}^{2}=\mathrm{Spec}(A[y,z])$
to the action $\overline{\sigma}$ on $\mathbb{A}_{S}^{2}$ generated
by the $A$-derivation $\overline{\partial}=a\partial_{y}+q(y)\partial_{z}$
of $A[y,z]$. Even though $\overline{\sigma}$ and $\overline{\tau}$
are no longer fixed point free in general, if we take the quotient
of $\mathbb{A}_{S}^{2}$ by the action $\overline{\sigma}$ as an
algebraic stack $[\mathbb{A}_{S}^{2}/\mathbb{G}_{a,S}]$ we obtain
a cartesian square \[\xymatrix{ \mathbb{A}^3_S \ar[d]_{\mathrm{pr}_{y,z}} \ar[r] & \mathfrak{X}=\mathbb{A}^3_S/\mathbb{G}_{a,S} \ar[d] \\ \mathbb{A}^2_S \ar[r] & [\mathbb{A}^2_S/\mathbb{G}_{a,S}]}\]which
simultaneously identifies the quotient stacks $[\mathbb{A}_{S}^{2}/\mathbb{G}_{a,S}]$
for the action $\overline{\sigma}$ and $[\mathfrak{X}/\mathbb{G}_{a,S}]$
for the action $\overline{\tau}$ with the quotient stack of $\mathbb{A}_{S}^{3}$
for the $\mathbb{G}_{a,S}^{2}$-action defined by the commuting actions
$\sigma$ and $\tau$. In this setting, the global equivariant triviality
of the action $\sigma$ becomes equivalent to the statement that a
separated algebraic $S$-space $\mathfrak{X}$ admitting a $\mathbb{G}_{a,S}$-action
whose algebraic stack quotient $[\mathfrak{X}/\mathbb{G}_{a,S}]$
is isomorphic to that of a triangular $\mathbb{G}_{a,S}$-action on
$\mathbb{A}_{S}^{2}$ is an affine scheme. 

While a direct proof of this reformulation seems totally out of reach
with existing methods, it turns out that its conclusion holds over
a certain $\mathbb{G}_{a,S}$-invariant principal open subset $V$
of $\mathbb{A}_{S}^{2}$ which dominates $S$ and for which the algebraic
stack quotient $[V/\mathbb{G}_{a,S}]$ is in fact represented by a
locally separated algebraic sub-space of $[\mathbb{A}_{S}^{2}/\mathbb{G}_{a,S}]$.
This provides at least an affine open subscheme $V\times_{S}\mathbb{A}_{S}^{1}/\mathbb{G}_{a,S}$
of $\mathfrak{X}$ dominating $S$, and leaves us with a closed subset
of codimension at most $2$ of $\mathfrak{X}$, supported on the closed
fiber of $\mathrm{p}:\mathfrak{X}\rightarrow S$, in a neighborhood
of which no further information is a priori available to decide even
the schemeness of $\mathfrak{X}$. But similar to the argument in
\cite{DubFin11}, this situation can be rescued for twin-triangular
actions: the fact that for such actions $\partial u=p(y,z)$ is actually
a polynomial in $y$ only enables the same reasoning with respect
to the other projection $\mathrm{pr}_{y,u}:\mathbb{A}_{S}^{3}\rightarrow\mathbb{A}_{S}^{2}=\mathrm{Spec}(A[y,u])$,
yielding a second affine open sub-scheme $V'\times_{S}\mathbb{A}_{S}^{1}/\mathbb{G}_{a,S}$
of $\mathfrak{X}$ dominating $S$. This implies at least the schemeness
of $\mathfrak{X}$, provided that the open subsets $V$ and $V'$
can be chosen so that the union of the corresponding open subschemes
of $\mathfrak{X}$ covers the closed fiber of $\mathrm{p}:\mathfrak{X}\rightarrow S$.
\\

The scheme of the article is the following. The first two sections
recall basic notions and discuss a couple of preliminary technical
reductions. The third section is devoted to establishing an effective
criterion for non properness of fixed point free triangular actions
from which we deduce the intermediate fact that every proper triangular
action is twin-triangulable. Then in the next section, we establish
that proper twin-triangular actions are indeed translations. Here,
in contrast with the proof for the complex case given in \cite{DubFin11},
our argument is independent of finite generation of rings of invariants
and reduces the systematic study of algebraic spaces quotients to
a minimum thanks to an appropriate Sheshadri cover trick \cite{Sesh72}.

\section{Recollection on proper, fixed point free and locally triangulable
$\mathbb{G}_{a}$-actions }

\subsection{Proper versus fixed point free actions}

\indent\newline\noindent Recall that an action $\sigma:\mathbb{G}_{a,S}\times_{S}E\rightarrow E$
of the additive group scheme $\mathbb{G}_{a,S}=\mathrm{Spec}_{S}(\mathcal{O}_{S}[t])=S\times_{\mathbb{Z}}\mathrm{Spec}(\mathbb{Z}[t])$
on an $S$-scheme $E$ is called proper if the morphism $\Phi=(\mathrm{pr}_{2},\sigma):\mathbb{G}_{a,S}\times_{S}E\rightarrow E\times_{S}E$
is proper. 

\begin{parn} If $S$ is moreover defined over a field $k$ of characteristic
zero, then the fact that $\mathbb{G}_{a,k}$ is affine and has no
nontrivial algebraic subgroups implies that properness is equivalent
to $\Phi$ being a closed immersion. In particular, a proper $\mathbb{G}_{a,S}$-action
is in this case fixed point free and as such, is equivariantly locally
trivial in the \'etale topology on $E$. That is, there exists an
affine $S$-scheme $U$ and a surjective \'etale morphism $f:V=U\times_{S}\mathbb{G}_{a,S}\rightarrow E$
which is equivariant for the action of $\mathbb{G}_{a,S}$ on $U\times_{S}\mathbb{G}_{a,S}$
by translations on the second factor. This implies in turn the existence
of a geometric quotient $\rho:E\rightarrow\mathfrak{X}=E/\mathbb{G}_{a,S}$
in the form of an \'etale locally trivial principal $\mathbb{G}_{a,S}$-bundle
over an algebraic $S$-space $\mathrm{p}:\mathfrak{X}\rightarrow S$
(see e.g. \cite[10.4]{LMB00}). Informally, $\mathfrak{X}$ is the
quotient of $U$ by the \'etale equivalence relation which identifies
two points $u,u'\in U$ whenever there exists $t,t'\in\mathbb{G}_{a,S}$
such that $f(u,t)=f(u',t')$.

\end{parn}

\begin{parn} \label{par:Properness_charac} Conversely, a fixed point
free $\mathbb{G}_{a,S}$-action is proper if and only if the geometric
quotient $\mathfrak{X}=E/\mathbb{G}_{a,S}$ is a separated $S$-space.
Indeed, by definition $\mathrm{p}:\mathfrak{X}\rightarrow S$ is separated
if and only if the diagonal morphism $\Delta:\mathfrak{X}\rightarrow\mathfrak{X}\times_{S}\mathfrak{X}$
is a closed immersion, a property which is local on the target with
respect to the fpqc topology \cite[II.3.8]{Knu71} and \cite[VIII.5.5]{SGA1}.
Since $\rho:E\rightarrow\mathfrak{X}$ is a $\mathbb{G}_{a,S}$-bundle,
taking the fpqc base change by $\rho\times\rho:E\times_{S}E\rightarrow\mathfrak{X}\times_{S}\mathfrak{X}$
yields a cartesian square \[\xymatrix{\mathbb{G}_{a,S} \times_S E \ar[r]^{\Phi} \ar[d]_{\rho \circ \mathrm{pr}_2} & E \times_S E \ar[d]^{\rho\times\rho} \\ \mathfrak{X} \ar[r]^-{\Delta} & \mathfrak{X} \times_S \mathfrak{X} }\]from
which we see that $\Delta$ is a closed immersion if and only if $\Phi$
is. 

\end{parn}

\subsection{Locally triangulable actions}

\indent\newline\noindent Given an affine scheme $S=\mathrm{Spec}(A)$
defined over a field of characteristic zero, an action $\sigma:\mathbb{G}_{a,S}\times_{S}\mathbb{A}_{S}^{n}\rightarrow\mathbb{A}_{S}^{n}$
generated by a locally nilpotent $A$-derivation $\partial$ of $\Gamma(\mathbb{A}_{S}^{n},\mathcal{O}_{\mathbb{A}_{S}^{n}})$
is called \emph{triangulable} if there exists an isomorphism of $A$-algebras
$\tau:\Gamma(\mathbb{A}_{A}^{n},\mathcal{O}_{\mathbb{A}_{A}^{n}})\stackrel{\sim}{\rightarrow}A[x_{1},\cdots,x_{n}]$
such that the conjugate $\delta=\tau\circ\partial\circ\tau^{-1}$
of $\partial$ is triangular with respect to the ordered coordinate
system $(x_{1},\ldots,x_{n})$, i.e. has the form 
\[
\delta=p_{0}\frac{\partial}{\partial x_{1}}+\sum_{i=1}^{n}p_{i-1}(x_{1},\ldots,x_{i-1})\frac{\partial}{\partial x_{i}}
\]
where $p_{0}\in A$ and where for every $i=1,\ldots,n$, $p_{i-1}(x_{1},\ldots,x_{i-1})\in A[x_{1},\ldots,x_{i-1}]\subset A[x_{1},\ldots,x_{n}]$.
By localizing this notion over the base $S$, we arrive at the following
definition:
\begin{defn}
Let $X$ be a scheme defined over a field of characteristic zero and
let $\pi:E\rightarrow X$ be a Zariski locally trivial $\mathbb{A}^{n}$-bundle
over $X$. An action $\sigma:\mathbb{G}_{a,X}\times_{X}E\rightarrow E$
of $\mathbb{G}_{a,X}$ on $E$ is called \emph{locally triangulable}
if there exists a covering of $\mathrm{Spec}(A)$ by affine open sub-schemes
$S_{i}=\mathrm{Spec}(A_{i})$, $i\in I$, such that $E\mid_{S_{i}}\simeq\mathbb{A}_{S_{i}}^{n}$
and such that the $\mathbb{G}_{a,S_{i}}$-action $\sigma_{i}:\mathbb{G}_{a,S_{i}}\times_{S_{i}}\mathbb{A}_{S_{i}}^{n}\rightarrow\mathbb{A}_{S_{i}}^{n}$
on $\mathbb{A}_{S_{i}}^{n}$ induced by $\sigma$ is triangulable. 
\end{defn}
A Zariski locally trivial $\mathbb{A}^{1}$-bundle $\pi:E\rightarrow X$
equipped with a fixed point free $\mathbb{G}_{a,X}$-action is nothing
but a principal $\mathbb{G}_{a,X}$-bundle. As mentioned in the introduction,
the nature of fixed point free locally triangulable $\mathbb{G}_{a,X}$-actions
on Zariski locally trivial $\mathbb{A}^{2}$-bundles $\pi:E\rightarrow X$
is classically known. Namely, we have the following generalization
of the main theorem of \cite{Snow88}: 
\begin{prop}
\label{prop:Rank2-bundle} Let $X$ be a n\oe therian normal scheme
defined over a field of characteristic $0$ and let $\pi:E\rightarrow X$
be a Zariski locally trivial $\mathbb{A}^{2}$-bundle equipped with
a fixed point free locally triangulable $\mathbb{G}_{a,X}$-action.
Then the geometric quotient $\mathrm{p}:E/\mathbb{G}_{a,X}\rightarrow X$
has the structure of a Zariski locally trivial $\mathbb{A}^{1}$-bundle
over $X$. \end{prop}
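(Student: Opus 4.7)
The plan is to reduce the proposition to a local computation on affine charts of $X$ that simultaneously trivialize $\pi$ and bring the action into triangular form. Since the conclusion is Zariski local on $X$, the local triviality of $\pi$ and the local triangulability hypothesis allow me to assume $X=\mathrm{Spec}(A)$ with $A$ a noetherian normal domain over a characteristic-zero field, $E=\mathbb{A}^{2}_{A}=\mathrm{Spec}(A[y,z])$, and the action generated by a triangular $A$-derivation $\partial=a\partial_{y}+q(y)\partial_{z}$ with $a\in A$ and $q(y)=q_{0}+q_{1}y+\cdots+q_{n}y^{n}\in A[y]$. The characteristic zero assumption allows me to introduce the canonical invariant $F=az-Q(y)\in A[y,z]$, where $Q(y)=\int q(y)\,dy$ is the formal antiderivative; this will serve as the fiber coordinate of the sought $\mathbb{A}^{1}$-bundle.

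I exploit fixed point freeness through a residue field analysis of the fixed locus $V(a,q(y))\subset\mathbb{A}^{2}_{A}$: its emptiness is equivalent to $(a,q(y))=A[y]$, which unwinds into the two clean conditions $q_{i}\in\sqrt{(a)}$ for every $i\geq 1$ and $(a,q_{0})=A$. The latter provides a Zariski open cover $X=D(a)\cup D(q_{0})$ over which I trivialize the quotient $\mathfrak{X}=E/\mathbb{G}_{a,X}$ separately. Over $D(a)$, the element $y/a$ satisfies $\partial(y/a)=1$ and so is a global slice, giving $\mathfrak{X}|_{D(a)}\simeq\mathrm{Spec}(A[1/a][F])\simeq\mathbb{A}^{1}_{D(a)}$ as a trivial $\mathbb{A}^{1}$-bundle. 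Over $D(q_{0})$, rescaling $z$ by $q_{0}^{-1}$ to normalize $q_{0}=1$, the derivation becomes $\partial=a\partial_{y}+(1+\tilde{q}(y))\partial_{z}$ with $\tilde{q}(y)$ having all coefficients in $\sqrt{(a)}$, and the main claim is that $A[1/q_{0}][y,z]^{\partial}=A[1/q_{0}][F]$, so $\mathfrak{X}|_{D(q_{0})}\simeq\mathbb{A}^{1}_{D(q_{0})}$ is again the trivial $\mathbb{A}^{1}$-bundle.

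To establish this claim, I proceed by induction on the $z$-degree $N$ of an invariant $h=\sum_{j=0}^{N}h_{j}(y)z^{j}$. The vanishing $\partial h=0$ produces the recursion $h_{j}'(y)\,a+(j+1)\,h_{j+1}(y)\,q(y)=0$, and processing it from $j=N-1$ down to $j=0$ yields a cascade of constraints: at step $k$ (that is, $j=N-k$), the condition $h_{N-k+1}(y)\,q(y)\in (a)A[y]$ combined with $q_{i}\in\sqrt{(a)}$ and the invertibility of positive integers in $A$ forces the leading coefficient $h_{N}$ into $(a^{k})$; after $N$ steps one obtains $h_{N}\in a^{N}A$. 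Subtracting $(h_{N}/a^{N})F^{N}$ then strictly decreases the $z$-degree, and induction gives $h\in A[F]$. A fiberwise inspection confirms that the morphism $\mathbb{A}^{2}_{A}\to\mathrm{Spec}(A[F])$ realizes the geometric quotient, its fibers being precisely the $\mathbb{G}_{a}$-orbits on each fiber of $\mathbb{A}^{2}_{A}\to\mathrm{Spec}(A)$. The two local trivializations over $D(a)$ and $D(q_{0})$ are then automatically compatible on the overlap by the universal property of the geometric quotient, and glue to the desired Zariski locally trivial $\mathbb{A}^{1}$-bundle structure on $\mathfrak{X}\to X$. The main technical obstacle is this divisibility cascade forcing $h_{N}\in a^{N}A$ under the radical hypothesis $q_{i}\in\sqrt{(a)}$ rather than the stronger $q_{i}\in(a)$: tracking the successive $y$-coefficients of the identities $h_{N-k+1}(y)\,q(y)\in(a)A[y]$ requires careful bookkeeping of which terms are absorbed by factors of $a$ and which must yield new divisibilities, a step where the normality of $A$ (via reduction to discrete valuation rings at height-one primes) may need to be invoked.
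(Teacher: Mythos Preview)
Your setup coincides with the paper's: both reduce to $X=\mathrm{Spec}(A)$ affine normal, $E=\mathbb{A}^2_A$, $\partial=a\partial_y+q(y)\partial_z$, introduce the invariant $F=az-Q(y)$, and extract from fixed-point-freeness that $q_i\in\sqrt{(a)}$ for $i\geq 1$ while $(a,q_0)=A$. The divergence is in how the quotient is identified. The paper never computes the invariant ring; instead it observes that the Jacobian of $F$ is $(-q(y),a)$, which generates the unit ideal at every point precisely because $q(y)\equiv q_0$ modulo any prime containing $a$. Thus $F:E\to\mathbb{A}^1_X$ is smooth, and combined with the fiberwise orbit check this makes the induced map $\tilde F:\mathfrak{X}\to\mathbb{A}^1_X$ smooth and bijective, hence an isomorphism.

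Your route through the invariant ring has a genuine gap at the end. Knowing that $A[1/q_0][y,z]^\partial=A[1/q_0][F]$ together with ``fibers of $F$ equal orbits'' does \emph{not} by itself force the geometric quotient $\mathfrak{X}$ (an algebraic space) to be $\mathrm{Spec}(A[1/q_0][F])$. The invariant ring computation only identifies $\Gamma(\mathfrak{X},\mathcal{O}_\mathfrak{X})$, and the fiberwise check only gives bijectivity of $\mathfrak{X}\to\mathbb{A}^1$ on points; neither rules out that $\mathfrak{X}$ is a non-affine or non-separated space with the same global sections (think of the affine line with a doubled origin). What is missing is exactly the smoothness or flatness of $F$, which would make $E\to\mathbb{A}^1_A$ a $\mathbb{G}_a$-torsor and hence identify $\mathbb{A}^1_A$ with $\mathfrak{X}$. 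That smoothness follows in one line from your own decomposition via the Jacobian criterion, and once you have it the entire invariant-ring computation becomes superfluous. Your divisibility cascade for $h_N\in a^N A$ is plausible and likely correct (the key point being that products of elements of $(a)$ with elements of $\sqrt{(a)}$ land back in $(a)$), but it is both harder than and logically downstream of the one-line smoothness check you omitted.
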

\begin{proof}
The assertion being local on the base $X$, we may assume that $X=\mathrm{Spec}(A)$
is the spectrum of a normal local domain containing a field of characteristic
$0$ and that $E=\mathbb{A}_{X}^{2}=\mathrm{Spec}(A[y,z])$ is equipped
with the $\mathbb{G}_{a,X}$-action generated by a triangular derivation
$\partial=a\partial_{y}+q(y)\partial_{z}$ of $A[y,z]$, where $a\in A$
and $q(y)\in A[y]$. The fixed point freeness hypothesis is equivalent
to the property that $a$ and $q(y)$ generate the unit ideal in $A[y,z]$.
So $q(y)$ has the form $q(y)=b+c\tilde{q}(y)$ where $b\in A$ is
relatively prime with $a$, $c\in\sqrt{aA}$ and $\tilde{q}(y)\in A[y]$.
Letting $Q(y)=\int_{0}^{y}q(\tau)d\tau=by+c\int_{0}^{y}\tilde{q}(\tau)d\tau$,
the polynomial $v=az-Q(y)\in A[y,z]$ belongs to the kernel $\mathrm{Ker}\partial$
of $\partial$ hence defines a $\mathbb{G}_{a,X}$-invariant morphism
$v:E\rightarrow\mathbb{A}_{X}^{1}=\mathrm{Spec}(A[t])$. Since $a$
and $b$ generate the unit ideal in $A$, it follows from the Jacobian
criterion that $v:E\rightarrow\mathbb{A}_{X}^{1}$ is a smooth morphism.
Furthermore, the fibers of $v$ coincide precisely with the $\mathbb{G}_{a,X}$-orbits
on $E$. Indeed, over the principal open subset $X_{a}=\mathrm{Spec}(A_{a})$
of $X$, $\partial$ admits $a^{-1}y$ as a slice and we have an equivariant
isomorphism $E\mid_{X_{a}}\simeq\mathrm{Spec}(A[a^{-1}v,a^{-1}y])\simeq\mathbb{A}_{X_{a}}^{1}\times_{X}\mathbb{G}_{a,X}$
where $\mathbb{G}_{a,X}$ acts by translations on the second factor.
On the other hand, the restriction $E\mid_{Z}$ of $E$ over the closed
subset $Z\subset X$ with defining ideal $\sqrt{aA}\subset A$ is
equivariantly isomorphic to $\mathbb{A}_{Z}^{2}$ equipped with the
$\mathbb{G}_{a,Z}$-action generated by the derivation $\overline{\partial}=\overline{b}\partial_{z}$
of $(A/\sqrt{aA})[y,z]$, where $\overline{b}\in(A/\sqrt{aA})^{*}$
denotes the residue class of $b$. The restriction of $v$ to $E\mid_{Z}$
coincides via this isomorphism to the morphism $\mathbb{A}_{Z}^{2}\rightarrow\mathbb{A}_{Z}^{1}$
defined by the polynomial $\overline{v}=\overline{b}y\in(A/\sqrt{aA})[y,z]$
which is obviously a geometric quotient. The above properties imply
that the morphism $\tilde{v}:E/\mathbb{G}_{a,X}\rightarrow\mathbb{A}_{X}^{1}$
induced by $v$ is smooth and bijective. Since it admits \'etale
quasi-sections, $\tilde{v}$ is then an isomorphism locally in the
\'etale topology on $\mathbb{A}_{X}^{1}$ whence an isomorphism. 
\end{proof}

\section{preliminary reductions }

\subsection{Reduction to a local base}

The statement of the Main Theorem can be rephrased equivalently as
the fact that a proper locally triangulable $\mathbb{G}_{a,S}$-action
on a Zariski locally trivial $\mathbb{A}^{3}$-bundle $\pi:E\rightarrow S$
is a translation in codimension $1$. This means that for every point
$s\in S$ of codimension $1$ with local ring $\mathcal{O}_{S,s}$,
the fiber product $E\times_{S}S'\simeq\mathbb{A}_{S'}^{3}$ of $E\rightarrow S$
with the canonical immersion $S'=\mathrm{Spec}(\mathcal{O}_{S,s})\hookrightarrow S$
equiped with the induced proper triangular action of $\mathbb{G}_{a,S'}=\mathbb{G}_{a,S}\times_{S}S'$
is equivariantly isomorphic to the trivial bundle $\mathbb{A}_{S'}^{2}\times_{S'}\mathbb{G}_{a,S'}$
over $S'$ equipped with the action of $\mathbb{G}_{a,S'}$ by translations
on the second factor. 

\begin{parn} \label{par:local_notation} So we are reduced to the
case where $S$ is the spectrum of a discrete valuation ring $A$
containing a field of characteristic $0$, say with maximal ideal
$\mathfrak{m}$ and residue field $\kappa=A/\mathfrak{m}$, and where
$\pi=\mathrm{pr}_{S}:E=\mathbb{A}_{S}^{3}=\mathrm{Spec}(A[y,z,u])\rightarrow S=\mathrm{Spec}(A)$
is equipped with a proper triangulable $\mathbb{G}_{a,S}$-action
$\sigma:\mathbb{G}_{a,S}\times_{S}\mathbb{A}_{S}^{3}\rightarrow\mathbb{A}_{S}^{3}$.
Letting $x\in\mathfrak{m}$ be uniformizing parameter, every such
action is equivalent to one generated by an $A$-derivation $\partial$
of $A[y,z,u]$ of the form 
\[
\partial=x^{n}\partial_{y}+q(y)\partial_{z}+p(y,z)\partial_{u}
\]
where $n\geq0$, $q(y)\in A[y]$ and $p(y,z)=\sum_{r=0}^{\ell}p_{r}(y)z^{r}\in A[y,z]$,
the fixed point freeness of $\sigma$ being equivalent to the property
that $x^{n}$, $q(y)$ and $p(y,z)$ generate the unit ideal in $A[y,z,u]$. 

\end{parn}

\subsection{\label{sub:Reduction-to-Affineness} Reduction to proving the affineness
of the geometric quotient}

With the notation of \S \ref{par:local_notation}, we can already
observe that if $n=0$ then $y$ is an obvious global slice for $\partial$
and hence that the action is globally equivariantly trivial with geometric
quotient $\mathfrak{X}=\mathbb{A}_{S}^{3}/\mathbb{G}_{a,S}\simeq\mathbb{A}_{S}^{2}$.
Similarly, if the residue class of $q(y)$ in $\kappa[y]$ is a non
zero constant then the action $\sigma$ is a translation. Indeed,
in this case, the $\mathbb{G}_{a,S}$-action $\overline{\sigma}:\mathbb{G}_{a,S}\times_{S}\mathbb{A}_{S}^{2}\rightarrow\mathbb{A}_{S}^{2}$
on $\mathbb{A}_{S}^{2}=\mathrm{Spec}(A[y,z])$ generated by the $A$-derivation
$\overline{\partial}=x^{n}\partial_{y}+q(y)\partial_{z}$ of $A[y,z]$
is fixed point free hence globally equivariantly trivial with geometric
quotient $\mathbb{A}_{S}^{2}/\mathbb{G}_{a,S}\simeq\mathbb{A}_{S}^{1}$
by virtue of Proposition \ref{prop:Rank2-bundle}. On the other hand,
the $\mathbb{G}_{a,S}$-equivariant projection $\mathrm{pr}_{y,z}:\mathbb{A}_{S}^{3}\rightarrow\mathbb{A}_{S}^{2}$
descends to a locally trivial $\mathbb{A}^{1}$-bundle between the
geometric quotients $\mathbb{A}_{S}^{3}/\mathbb{G}_{a,S}$ and $\mathbb{A}_{S}^{2}/\mathbb{G}_{a,S}$,
and since $\mathbb{A}_{S}^{2}/\mathbb{G}_{a,S}\simeq\mathbb{A}_{S}^{1}$
is affine and factorial, it follows that $\mathbb{A}_{S}^{3}/\mathbb{G}_{a,S}\simeq\mathbb{A}_{S}^{2}/\mathbb{G}_{a,S}\times_{S}\mathbb{A}_{S}^{1}\simeq\mathbb{A}_{S}^{2}.$
The affineness of $\mathbb{A}_{S}^{2}$ implies in turn that the quotient
morphism $\mathbb{A}_{S}^{3}\rightarrow\mathbb{A}_{S}^{3}/\mathbb{G}_{a,S}$
is the trivial $\mathbb{G}_{a,S}$-bundle whence that $\sigma:\mathbb{G}_{a,S}\times_{S}\mathbb{A}_{S}^{3}\rightarrow\mathbb{A}_{S}^{3}$
is a translation. Alternatively, one can observe that a global slice
$s\in A[y,z]$ for the action $\overline{\sigma}$ is also a global
slice for $\sigma$ via the inclusion $A[y,z]\subset A[y,z,u]$ 

More generally, the following Lemma reduces the question of global
equivariant triviality with geometric quotient $\mathfrak{X}=\mathbb{A}_{S}^{3}/\mathbb{G}_{a,S}$
isomorphic to $\mathbb{A}_{S}^{2}$ to showing that $\mathfrak{X}$,
which a priori only exists as an algebraic $S$-space, is an affine
$S$-scheme: 
\begin{lem}
\label{lem:Reduction_to_affineness} A fixed point free triangular
action $\sigma:\mathbb{G}_{a,S}\times_{S}\mathbb{A}_{S}^{3}\rightarrow\mathbb{A}_{S}^{3}$
is a translation if and only if its geometric quotient $\mathfrak{X}=\mathbb{A}_{S}^{3}/\mathbb{G}_{a,S}$
is an affine $S$-scheme. \end{lem}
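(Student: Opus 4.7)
The forward implication is immediate: if $\sigma$ is a translation then $\mathfrak{X}\simeq\mathbb{A}_{S}^{2}=\mathrm{Spec}(A[y,z])$ is manifestly affine. The content of the lemma is the converse, and the plan splits naturally into two parts: first trivialize the $\mathbb{G}_{a,S}$-torsor $\rho:\mathbb{A}_{S}^{3}\rightarrow\mathfrak{X}$, then identify the base $\mathfrak{X}$ with $\mathbb{A}_{S}^{2}$.

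Assume $\mathfrak{X}$ is an affine $S$-scheme. Since $\sigma$ is fixed point free in characteristic zero, the discussion of \S 1.1 shows that $\rho:\mathbb{A}_{S}^{3}\rightarrow\mathfrak{X}$ is an \'etale locally trivial principal $\mathbb{G}_{a,S}$-bundle. The set of isomorphism classes of such torsors is $H^{1}_{\mathrm{et}}(\mathfrak{X},\mathbb{G}_{a})$, which, via the identification of $\mathbb{G}_{a}$ with the additive group scheme underlying $\mathcal{O}_{\mathfrak{X}}$, coincides with the quasi-coherent cohomology $H^{1}(\mathfrak{X},\mathcal{O}_{\mathfrak{X}})$. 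Since $\mathfrak{X}$ is affine this group vanishes, so $\rho$ is trivial. Equivalently, $\partial$ admits a global slice $s\in A[y,z,u]$, and writing $R=\mathrm{Ker}\,\partial=\Gamma(\mathfrak{X},\mathcal{O}_{\mathfrak{X}})$ we obtain an equivariant identification $\mathbb{A}_{S}^{3}\simeq\mathfrak{X}\times_{S}\mathbb{A}_{S}^{1}=\mathrm{Spec}(R[s])$, with $\mathbb{G}_{a,S}$ acting by translation on the second factor. In particular $R$ is smooth and flat over $A$ by faithfully flat descent along $\rho$.

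It then remains to show that the stably polynomial $A$-algebra $R$, satisfying $R[s]\simeq A[y,z,u]$, is itself polynomial in two variables over $A$. Two fiberwise observations reduce this to a cancellation statement. On the generic point, $x^{n}$ is invertible in $K=\mathrm{Frac}(A)$, so $x^{-n}y$ is already a slice for $\partial$ over $K$ and $\mathfrak{X}_{K}\simeq\mathbb{A}_{K}^{2}$. On the closed point, $\mathfrak{X}_{\kappa}$ is a smooth affine $\kappa$-scheme of dimension $2$ with $\mathfrak{X}_{\kappa}\times_{\kappa}\mathbb{A}_{\kappa}^{1}\simeq\mathbb{A}_{\kappa}^{3}$, and the Miyanishi--Sugie cancellation theorem for $\mathbb{A}^{2}$ in characteristic zero gives $\mathfrak{X}_{\kappa}\simeq\mathbb{A}_{\kappa}^{2}$. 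The flat $S$-scheme $\mathfrak{X}$ thus has both fibers isomorphic to affine planes and is stably polynomial over $A$; an $\mathbb{A}^{2}$-fibration / cancellation theorem of Asanuma--Bhatwadekar--Dutta type over a discrete valuation ring then yields $R\simeq A[t_{1},t_{2}]$, equivalently $\mathfrak{X}\simeq\mathbb{A}_{S}^{2}$, so $\sigma$ is a translation.

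The principal obstacle is precisely this final cancellation step over a DVR rather than over a field: everything preceding it is formal sheaf-cohomology and fiber geometry. If one wishes to avoid invoking the general cancellation input, an alternative self-contained route, in the spirit of the Corollaries following the Main Theorem, is to first establish that $\mathrm{p}:\mathfrak{X}\rightarrow S$ carries a Zariski locally trivial $\mathbb{A}^{2}$-bundle structure; then \cite{BCW77} upgrades it to a rank $2$ vector bundle on $\mathfrak{X}$, which is stably trivial via the identification $\mathbb{A}_{S}^{3}\simeq\mathfrak{X}\times_{S}\mathbb{A}_{S}^{1}$ and hence trivial by \cite[IV 3.5]{Bas68}, recovering $\mathfrak{X}\simeq\mathbb{A}_{S}^{2}$.
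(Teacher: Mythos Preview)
Your argument is correct and follows the same overall architecture as the paper: show that $\mathrm{p}:\mathfrak{X}\rightarrow S$ is an $\mathbb{A}^{2}$-fibration, then conclude $\mathfrak{X}\simeq\mathbb{A}_{S}^{2}$ and trivialize the torsor. The ``$\mathbb{A}^{2}$-fibration / cancellation theorem of Asanuma--Bhatwadekar--Dutta type'' you need in the last step is precisely Sathaye's criterion \cite{Sat83}, which the paper cites by name; you should do the same rather than leave the reference vague.

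The one genuine difference is in how you identify the closed fiber. You deduce $\mathfrak{X}_{\kappa}\simeq\mathbb{A}_{\kappa}^{2}$ from $\mathfrak{X}_{\kappa}\times_{\kappa}\mathbb{A}_{\kappa}^{1}\simeq\mathbb{A}_{\kappa}^{3}$ via Miyanishi--Sugie cancellation for $\mathbb{A}^{2}$. The paper instead uses the triangular hypothesis directly: for $n\geq 1$ the induced $\mathbb{G}_{a,\kappa}$-action on $\mathrm{pr}_{S}^{-1}(\mathfrak{m})\simeq\mathbb{A}_{\kappa}^{3}$ is generated by the $\kappa[y]$-derivation $\overline{q}(y)\partial_{z}+\overline{p}(y,z)\partial_{u}$, and Proposition~\ref{prop:Rank2-bundle} then exhibits $\mathbb{A}_{\kappa}^{3}/\mathbb{G}_{a,\kappa}$ as a Zariski locally trivial $\mathbb{A}^{1}$-bundle over $\mathrm{Spec}(\kappa[y])$, hence $\mathbb{A}_{\kappa}^{2}$. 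Your route is slicker to state and makes no use of triangularity at that point, at the price of importing a deeper external input; the paper's route stays within the elementary triangular machinery already developed.

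Your closing ``alternative self-contained route'' should be dropped. Establishing a priori that $\mathrm{p}:\mathfrak{X}\rightarrow S$ is a Zariski locally trivial $\mathbb{A}^{2}$-bundle is exactly the content of the Main Theorem, proved only in Sections~3--4; invoking it inside this lemma is circular. And once you already know $\mathfrak{X}$ is an affine $\mathbb{A}^{2}$-fibration over a DVR, Sathaye gives $\mathfrak{X}\simeq\mathbb{A}_{S}^{2}$ directly, so the detour through \cite{BCW77} and \cite[IV 3.5]{Bas68} buys nothing here.
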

\begin{proof}
One direction is clear, so assume that $\mathfrak{X}$ is an affine
$S$-scheme. It suffices to show that the structure morphism $\mathrm{p}:\mathfrak{X}\rightarrow S$
is an $\mathbb{A}^{2}$-fibration, i.e. a faithfully flat morphism
with all its fibers isomorphic to affine planes over the corresponding
residue fields. Indeed, if so, the affineness of $\mathfrak{X}$ implies
on the one hand that $\mathfrak{X}$ is isomorphic to the trivial
$\mathbb{A}^{2}$-bundle $\mathbb{A}_{S}^{2}$ by virtue of \cite{Sat83}
and on the other hand that $\rho:\mathbb{A}_{S}^{3}\rightarrow\mathfrak{X}$
is isomorphic to the trivial $\mathbb{G}_{a,S}$-bundle $\mathfrak{X}\times_{S}\mathbb{G}_{a,S}$
over $S$, which yields $\mathbb{G}_{a,S}$-equivariant isomorphisms
$\mathbb{A}_{S}^{3}\simeq\mathfrak{X}\times_{S}\mathbb{G}_{a,S}\simeq\mathbb{A}_{S}^{2}\times_{S}\mathbb{G}_{a,S}$. 

To see that $\mathrm{p}:\mathfrak{X}\rightarrow S$ is an $\mathbb{A}^{2}$-fibration,
recall that $\mathrm{pr}_{S}:\mathbb{A}_{S}^{3}\rightarrow S$ and
the quotient morphism $\rho:\mathbb{A}_{S}^{3}\rightarrow\mathfrak{X}=\mathbb{A}_{S}^{3}/\mathbb{G}_{a,S}$
are both faithfully flat, so that $\mathrm{p}:\mathfrak{X}\rightarrow S$
is faithfully flat too (\cite[II.3.2]{Knu71} and \cite[Corollaire 2.2.13(iii)]{EGA4}).
Letting $\mathfrak{m}$ and $\xi$ be the closed and generic points
of $S$ respectively, the fibers $\mathrm{pr}_{S}^{-1}(\mathfrak{m})\simeq\mathbb{A}_{\kappa}^{3}$
and $\mathrm{pr}_{S}^{-1}(\xi)\simeq\mathbb{A}_{\kappa(\xi)}^{3}$
coincide with the total spaces of the restriction of the $\mathbb{G}_{a,S}$-bundle
$\rho:\mathbb{A}_{S}^{3}\rightarrow\mathfrak{X}$ over the fibers
$\mathfrak{X}_{\mathfrak{m}}=\mathrm{p}^{-1}(\mathfrak{m})$ and $\mathfrak{X}_{\xi}=\mathrm{p}^{-1}(\xi)$
respectively. Since the $\mathbb{G}_{a,\kappa(\xi)}$-action induced
by $\sigma$ on $\mathrm{pr}_{S}^{-1}(\xi)$ admits $x^{-n}y$ as
a global slice, it is a translation with geometric quotient $\mathbb{A}_{\kappa(\xi)}^{3}/\mathbb{G}_{a,\kappa(\xi)}\simeq\mathbb{A}_{\kappa(\xi)}^{2}$
and so $\mathfrak{X}_{\xi}\simeq\mathbb{A}_{\kappa(\xi)}^{2}$. On
the other hand, we may assume in view of the above discussion that
$n\geq1$ so that the $\mathbb{G}_{a,\kappa}$-action on $\mathrm{pr}_{S}^{-1}(\mathfrak{m})\simeq\mathbb{A}_{\kappa}^{3}$
induced by $\sigma$ coincides with the fixed point free action generated
by the $\kappa[y]$-derivation $\overline{\partial}=\overline{q}(y)\partial_{z}+\overline{p}(y,z)\partial_{u}$
of $\kappa[y][z,u]$, where $\overline{q}(y)$ and $\overline{p}(y,z)$
denote the respective residue classes of $q(y)$ and $p(y,z)$ modulo
$x$. By virtue of Proposition \ref{prop:Rank2-bundle}, the geometric
quotient $\mathbb{A}_{\kappa}^{3}/\mathbb{G}_{a,\kappa}$ has the
structure of a Zariski locally trivial $\mathbb{A}^{1}$-bundle over
$\mathbb{A}_{\kappa}^{1}=\mathrm{Spec}(\kappa[y])$ hence is isomorphic
to $\mathbb{A}_{\kappa}^{2}$. This implies that $\mathfrak{X}_{\mathfrak{m}}\simeq\mathbb{A}_{\kappa}^{3}/\mathbb{G}_{a,\kappa}\simeq\mathbb{A}_{\kappa}^{2}$
as desired. \end{proof}
\begin{rem}
By exploiting the fact that arbitrary $\mathbb{G}_{a,S}$-actions
on the affine $3$-space $\mathbb{A}_{S}^{3}$ over the spectrum $S$
of a discrete valuation ring $A$ containing a field of characteristic
$0$ have finitely generated rings of invariants \cite{BhaDa09},
one can derive the following stronger characterization: a fixed point
free action $\sigma:\mathbb{G}_{a,S}\times_{S}\mathbb{A}_{S}^{3}\rightarrow\mathbb{A}_{S}^{3}$
is either a translation or its geometric quotient $\mathfrak{X}=\mathbb{A}_{S}^{3}/\mathbb{G}_{a,S}$
is an algebraic space which is not a scheme. 

Indeed, the quotient morphism $\rho:\mathbb{A}_{S}^{3}\rightarrow\mathfrak{X}$
is again an $\mathbb{A}^{2}$-fibration thanks to \cite[Theorem 3.2]{DaiKal09}
which asserts that for every field $\kappa$ of characteristic $0$
a fixed point free action of $\mathbb{G}_{a,\kappa}$-action on $\mathbb{A}_{\kappa}^{3}$
is a translation, and so the assertion is equivalent to the fact that
a Zariski locally equivariantly trivial action $\sigma$ has affine
geometric quotient $\mathfrak{X}$. This can be seen in a similar
way as in the proof of Theorem 2.1 in \cite{DevFinvR04}. Namely,
by hypothesis we can find an open covering of $\mathbb{A}_{S}^{3}$
by finitely many invariant affine open subsets $U_{i}$ on which the
induced $\mathbb{G}_{a,S}$-action is a translation with affine geometric
quotient $U_{i}/\mathbb{G}_{a,S}$, $i=1,\ldots,n$. Since $U_{i}$
and $\mathbb{A}_{S}^{3}$ are affine, $\mathbb{A}_{S}^{3}\setminus U_{i}$
is a $\mathbb{G}_{a,S}$-invariant Weil divisor on $\mathbb{A}_{S}^{3}$
which is in fact principal as $A$, whence $A[y,z,u]$, is factorial.
It follows that there exists invariant regular functions $f_{i}\in A[y,z,u]^{\mathbb{G}_{a}}\simeq\Gamma(\mathfrak{X},\mathcal{O}_{\mathfrak{X}})$
such that $U_{i}=\mathrm{Spec}(A[x,y,z]_{f_{i}})$ coincides with
the inverse image by the quotient morphism $\rho:\mathbb{A}_{S}^{3}\rightarrow\mathfrak{X}$
of the principal open subset $\mathfrak{X}_{f_{i}}$ of $\mathfrak{X}$,
$i=1,\ldots,n$. Since $\rho:\mathbb{A}_{S}^{3}\rightarrow\mathfrak{X}$
is a $\mathbb{G}_{a,S}$-bundle and $U_{i}\simeq U_{i}/\mathbb{G}_{a,S}\times_{S}\mathbb{G}_{a,S}$
by assumption, we conclude that $\mathfrak{X}$ is covered by the
principal affine open subsets $\mathfrak{X}_{f_{i}}\simeq U_{i}/\mathbb{G}_{a,S}$,
$i=1,\ldots,n$, whence is quasi-affine. Now since by the aforementioned
result \cite{BhaDa09}, $A[y,z,u]^{\mathbb{G}_{a}}$ is an integrally
closed finitely generated $A$-algebra, it is enough to check that
the canonical open immersion $j:\mathfrak{X}\rightarrow X=\mathrm{Spec}(\Gamma(\mathfrak{X},\mathcal{O}_{\mathfrak{X}}))\simeq\mathrm{Spec}(A[y,z,u]^{\mathbb{G}_{a}})$
is surjective. The surjectivity over the generic point of $S$ follows
immediately from the fact the kernel of a locally nilpotent derivation
derivation of a polynomial ring in three variables over a field $K$
of characteristic $0$ is isomorphic to a polynomial ring in two variables
over $K$ (see e.g. \cite{Miy85}). So it remains to show that the
induced open immersion $j_{\mathfrak{m}}:\mathfrak{X}_{m}\simeq\mathbb{A}_{\kappa}^{2}\hookrightarrow X_{\mathfrak{m}}=\mathrm{Spec}(A[y,z,u]^{\mathbb{G}_{a}}\otimes_{A}A/\mathfrak{m})$
between the corresponding fibers over the closed point $\mathfrak{m}$
of $S$ is surjective, in fact, an isomorphism. Since $x\in A[y,z,u]^{\mathbb{G}_{a}}$
is prime, $X_{\mathfrak{m}}\simeq\mathrm{Spec}(A[y,z,u]^{\mathbb{G}_{a}}/(x))$
is an integral $\kappa$-scheme of finite type and Corollary 4.10
in \cite{BhaDa09} can be interpreted more precisely as the fact that
$X_{\mathfrak{m}}\simeq C\times_{\kappa}\mathbb{A}_{\kappa}^{1}$
for a certain $1$-dimensional affine $\kappa$-scheme $C$. This
implies in turn that $j_{\mathfrak{m}}$ is an isomorphism. Indeed,
since $C$ is dominated via $j_{\mathfrak{m}}$ by a general affine
line $\mathbb{A}_{\kappa}^{1}\subset\mathbb{A}_{\kappa}^{2}$, its
normalization $\tilde{C}$ is isomorphic to $\mathbb{A}_{\kappa}^{1}$
and so $j_{\mathfrak{m}}$ factors through an open immersion $\tilde{j}_{\mathfrak{m}}:\mathbb{A}_{\kappa}^{2}\hookrightarrow\tilde{C}\times_{\kappa}\mathbb{A}_{\kappa}^{1}\simeq\mathbb{A}_{\kappa}^{2}$.
The latter is surjective for otherwise the complement of its image
would be of pure codimension $1$ hence a principal divisor $\mathrm{div}(f)$
for a non constant regular function $f$ on $\tilde{C}\times_{\kappa}\mathbb{A}_{\kappa}^{1}$.
But then $f$ would restrict to a non constant invertible function
on the image of $\mathbb{A}_{\kappa}^{2}$ which is absurd. Thus $\tilde{j}_{\mathfrak{m}}:\mathbb{A}_{\kappa}^{2}\hookrightarrow\tilde{C}\times_{\kappa}\mathbb{A}_{\kappa}^{1}\simeq\mathbb{A}_{\kappa}^{2}$
is an isomorphism and since the normalization morphism $\tilde{C}\times_{\kappa}\mathbb{A}_{\kappa}^{1}\rightarrow C\times_{\kappa}\mathbb{A}_{\kappa}^{1}$
is finite whence closed it follows that $j_{\mathfrak{m}}:\mathbb{A}_{\kappa}^{2}\hookrightarrow C\times_{\kappa}\mathbb{A}_{\kappa}^{1}$
is an open and closed immersion hence an isomorphism.
\end{rem}

\subsection{Reduction to extensions of irreducible derivations}

In view of the discussion at the beginning of subsection \ref{sub:Reduction-to-Affineness},
we may assume for the $A$-derivation 
\[
\partial=x^{n}\partial_{y}+q(y)\partial_{z}+p(y,z)\partial_{u}
\]
that $n>0$ and that the residue class of $q(y)$ in $\kappa[y]$
is either zero or not constant. In the first case, $q(y)\in\mathfrak{m}A[y]$
has the form $q(y)=x^{\mu}q_{0}(y)$ where $\mu>0$ and where $q_{0}(y)\in A[y]$
has non zero residue class modulo $\mathfrak{m}$, so that the derivation
$\overline{\partial}=x^{n}\partial_{y}+q(y)\partial_{z}$ induced
by $\partial$ on the sub-ring $A[y,z]$ is reducible. On the other
hand, the fixed point freeness of the $\mathbb{G}_{a,S}$-action $\sigma$
generated by $\partial$ implies that up to multiplying $u$ by an
invertible element in $A$, one has $p(y,z)=1+x^{\nu}p_{0}(y,z)$
for some $\nu>0$ and $p_{0}(y,z)\in A[y,z]$. 

If $\mu\geq n$, then letting $Q_{0}(y)=\int_{0}^{y}q_{0}(\tau)d\tau\in A[y]$,
the $\mathbb{G}_{a,S}$-invariant polynomial $z_{1}=z-x^{\mu-n}Q_{0}(y)$
is a variable of $A[y,z,u]$ over $A[y,u]$, and so $\partial$ is
conjugate to the derivation $x^{n}\partial_{y}+p(y,z_{1}+x^{\mu-n}Q_{0}(y))\partial_{u}$
of the polynomial ring in two variables $A[z_{1}][y,u]$ over $A[z_{1}]$.
Since $\sigma$ is fixed point free, Proposition \ref{prop:Rank2-bundle}
implies that it is equivariantly trivial with geometric quotient isomorphic
to the total space of the trivial $\mathbb{A}^{1}$-bundle over $\mathbb{A}_{S}^{1}=\mathrm{Spec}(A[z_{1}])$
whence to $\mathbb{A}_{S}^{2}$. 

Otherwise, if $\mu<n$, then the $\mathbb{G}_{a,S}$-action $\tilde{\sigma}:\mathbb{G}_{a,S}\times_{S}\mathbb{A}_{S}^{3}\rightarrow\mathbb{A}_{S}^{3}$
on $\mathbb{A}_{S}^{3}=\mathrm{Spec}(A[\tilde{y},\tilde{z},\tilde{u}])$
generated by the $A$-derivation 
\[
\tilde{\partial}=x^{n-\mu}\partial_{\tilde{y}}+q_{0}(\tilde{y})\partial_{\tilde{z}}+(1+x^{\nu}p_{0}(\tilde{y},\tilde{z}))\partial_{\tilde{u}}
\]
is again fixed point free, hence admits a geometric quotient $\tilde{\rho}:\mathbb{A}_{S}^{3}\rightarrow\tilde{\mathfrak{X}}=\mathbb{A}_{S}^{3}/\mathbb{G}_{a,S}$
in the form of an \'etale locally trivial $\mathbb{G}_{a,S}$-bundle
over a certain algebraic $S$-space $\tilde{\mathfrak{X}}$. 
\begin{lem}
The quotient spaces $\mathfrak{X}=\mathbb{A}_{S}^{3}/\mathbb{G}_{a,S}$
and $\tilde{\mathfrak{X}}=\mathbb{A}_{S}^{3}/\mathbb{G}_{a,S}$ for
the $\mathbb{G}_{a,S}$-actions $\sigma$ and $\tilde{\sigma}$ on
$\mathbb{A}_{S}^{3}$ generated by $\partial$ and $\tilde{\partial}$
respectively are isomorphic. In particular $\sigma$ is proper (resp.
equivariantly trivial) if and only if $\tilde{\sigma}$ is proper
(resp. equivariantly trivial). \end{lem}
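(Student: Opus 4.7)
My plan is to construct an explicit $S$-morphism $\bar{\Psi}\colon\mathfrak{X}\to\tilde{\mathfrak{X}}$ induced by a non-invertible morphism on the ambient $\mathbb{A}_{S}^{3}$ and then prove it is an isomorphism by checking it fibrewise over $S$. Consider the $A$-algebra homomorphism
\[
\Psi\colon A[\tilde{y},\tilde{z},\tilde{u}]\longrightarrow A[y,z,u],\qquad \tilde{y}\mapsto y,\ \tilde{z}\mapsto z,\ \tilde{u}\mapsto x^{\mu}u,
\]
and write $\Psi^{\#}\colon\mathbb{A}_{S}^{3}\to\mathbb{A}_{S}^{3}$ for the corresponding $S$-morphism. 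A direct check on the three generators, using that $\partial u=1+x^{\nu}p_{0}=p(y,z)$ and $q(y)=x^{\mu}q_{0}(y)$, shows $\partial\circ\Psi=\Psi\circ(x^{\mu}\tilde{\partial})$. Thus $\Psi^{\#}$ is $\mathbb{G}_{a,S}$-equivariant when the source carries $\sigma$ and the target carries the action generated by $x^{\mu}\tilde{\partial}$, which is nothing but $\tilde{\sigma}$ precomposed with the $S$-group scheme endomorphism $t\mapsto x^{\mu}t$ of $\mathbb{G}_{a,S}$. In particular $\Psi^{\#}$ carries every $\sigma$-orbit into a single $\tilde{\sigma}$-orbit, so the composition $\tilde{\rho}\circ\Psi^{\#}\colon\mathbb{A}_{S}^{3}\to\tilde{\mathfrak{X}}$ is $\sigma$-invariant and factors, by the universal property of the geometric quotient, through a unique $S$-morphism $\bar{\Psi}\colon\mathfrak{X}\to\tilde{\mathfrak{X}}$.

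Next I would verify that $\bar{\Psi}$ is an isomorphism by restricting to the two fibres of $\mathrm{p}\colon\mathfrak{X}\to S$. Over the generic point $\xi$ of $S$, $x^{\mu}$ is invertible, so $\Psi$ is an isomorphism of $A[x^{-1}]$-algebras and $t\mapsto x^{\mu}t$ is an automorphism of $\mathbb{G}_{a,S_{\xi}}$; hence $\bar{\Psi}_{\xi}$ is an isomorphism. Over the closed point, $\sigma|_{x=0}$ reduces to the translation $\partial_{u}$ with quotient $\mathfrak{X}_{\kappa}=\mathrm{Spec}(\kappa[y,z])$, while $\tilde{\partial}|_{x=0}=q_{0}(\tilde{y})\partial_{\tilde{z}}+\partial_{\tilde{u}}$ admits $\tilde{u}$ as a global slice and produces $\tilde{\mathfrak{X}}_{\kappa}=\mathrm{Spec}(\kappa[\tilde{y},\tilde{z}-\tilde{u}q_{0}(\tilde{y})])$. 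Since $\Psi^{\#}_{\kappa}$ collapses $u$ to $0$, the induced map $\bar{\Psi}_{\kappa}$ sends $(y,z)$ to $(y,z-0\cdot q_{0}(y))=(y,z)$ under the natural identification of coordinates, which is the identity of $\mathbb{A}_{\kappa}^{2}$ and in particular an isomorphism.

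Finally, I would invoke the standard fibrewise isomorphism criterion: a morphism of flat, finitely presented, separated algebraic $S$-spaces which restricts to an isomorphism on every fibre of $S$ is itself an isomorphism. Both $\mathfrak{X}$ and $\tilde{\mathfrak{X}}$ are étale-locally-trivial $\mathbb{G}_{a,S}$-torsor quotients of the flat, finitely presented scheme $\mathbb{A}_{S}^{3}$, so they satisfy these hypotheses and $\bar{\Psi}$ is an isomorphism. The \emph{in particular} clause then follows automatically: properness of a fixed point free action is equivalent to separatedness of its geometric quotient (\S\ref{par:Properness_charac}), which is an isomorphism invariant, and equivariant triviality reduces by Lemma~\ref{lem:Reduction_to_affineness} to the affineness of the quotient, again preserved by $\bar{\Psi}$.

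The step I expect to be most delicate is the passage from a fibrewise isomorphism to a global one in the algebraic space category, because a direct polynomial inverse of $\Psi$ would have to send $u$ to $x^{-\mu}\tilde{u}$, which is not defined over $A$. If the general fibrewise criterion is felt to be too heavy, an alternative route is to base change along a Seshadri-type cover $S'\to S$ extracting a $\mu$-th root of $x$, on which $\Psi^{\#}$ does become invertible after an obvious change of variables, construct the inverse there, and then descend; this is in line with the Sheshadri cover trick the authors announce for Section~4.
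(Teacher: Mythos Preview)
Your argument is essentially correct and gives a genuinely different, more streamlined proof than the paper's. The paper proceeds by induction on $\mu$: it introduces a chain of intermediate derivations $\tilde\partial_i$, $i=0,\dots,\mu$, and for each step constructs an affine modification $\beta_i:V_{i+1}\to V_i$ (blowing up the ideal $(x,\tilde u_i)$ and removing the proper transform of $\{x=0\}$), then checks by hand, \'etale-locally on $\tilde{\mathfrak X}_i$, that the induced map $\tilde\beta_i:\tilde{\mathfrak X}_{i+1}\to\tilde{\mathfrak X}_i$ is an isomorphism. Your single map $\Psi^\#$ is exactly the composite of these $\mu$ modifications, and instead of the \'etale-local verification you invoke the fibral isomorphism criterion once. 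This is cleaner and avoids the bookkeeping of the inductive step; the price is the appeal to a fairly substantial general result (EGA IV 17.9.5 and its consequences, or the analogue for algebraic spaces), whereas the paper's argument stays close to the explicit geometry of the torsor.

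Two points deserve correction. First, drop ``separated'' from your statement of the fibral criterion: the criterion you need (for $X\to S$ flat and locally of finite presentation, $Y\to S$ locally of finite type, $f_s$ an isomorphism for all $s\in S$, hence $f$ an isomorphism) holds for algebraic spaces without any separatedness hypothesis. This matters, because separatedness of $\mathfrak X$ and $\tilde{\mathfrak X}$ is precisely equivalent to properness of $\sigma$ and $\tilde\sigma$, which is part of what the Lemma is meant to establish; assuming it would make the ``in particular'' clause circular. Second, your closing remark about a Seshadri cover is off target: extracting a $\mu$-th root of $x$ over a ramified cover $S'\to S$ does not make $x^\mu$ invertible, so $\Psi^\#$ is still not an isomorphism after that base change. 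The fibral criterion already does the job, so this fallback is unnecessary and, as stated, would not work.
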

\begin{proof}
Letting $\tilde{\rho}_{i}:V_{i}=\mathbb{A}_{S}^{3}\rightarrow\tilde{\mathfrak{X}}_{i}=V_{i}/\mathbb{G}_{a,S}$,
$i=0,\ldots,\mu$, denote the geometric quotient of $V_{i}=\mathrm{Spec}(A[\tilde{y}_{i},\tilde{z}_{i},\tilde{u}_{i}])$
for the fixed point free $\mathbb{G}_{a,S}$-action $\tilde{\sigma}_{i}$
generated by the $A$-derivation 
\[
\tilde{\partial}_{i}=\left(1+x^{\nu}p_{0}(\tilde{y}_{i},\tilde{z}_{i})\right)\partial_{\tilde{u}_{i}}+x^{\mu-i}q_{0}(\tilde{y}_{i})\partial_{\tilde{z}_{i}}+x^{n-i}\partial_{\tilde{y}_{i}},
\]
the first assertion will follow from the more general fact that $\tilde{\mathfrak{X}}_{i}\simeq\tilde{\mathfrak{X}}_{i+1}$
for every $i=0,\ldots,\mu-1$. Indeed, we first observe that since
$\tilde{u}_{i}$ is a slice for $\tilde{\partial}_{i}$ modulo $x$,
$\tilde{\mathfrak{X}}_{i,\mathfrak{m}}=\tilde{\mathfrak{X}}_{i}\times_{S}\mathrm{Spec}(\kappa)$
is isomorphic to $\mathbb{A}_{\kappa}^{2}=\mathrm{Spec}((A/\mathfrak{m})[\tilde{y}_{i},\tilde{z}_{i}])$
and the restriction of $\tilde{\rho}_{i}$ over $\tilde{\mathfrak{X}}_{i,\mathfrak{m}}$
is isomorphic to the trivial bundle $\mathrm{pr}_{1}:\tilde{\mathfrak{X}}_{i,\mathfrak{m}}\times_{\kappa}\mathrm{Spec}(\kappa[\tilde{u}_{i}])\rightarrow\tilde{\mathfrak{X}}_{i,\mathfrak{m}}$.
Now let $\beta_{i}:V_{i+1}\rightarrow V_{i}$ be the affine modification
of the total space of $\tilde{\rho}_{i}:\mathbb{A}_{S}^{3}\rightarrow\tilde{\mathfrak{X}}_{i}$
with center at the zero section of the induced bundle $\mathrm{pr}_{1}:\tilde{\mathfrak{X}}_{i,\mathfrak{m}}\times_{\kappa}\mathrm{Spec}(\kappa[\tilde{u}_{i}])\rightarrow\tilde{\mathfrak{X}}_{i,\mathfrak{m}}$
and with principal divisor $x$. In view of the previous description,
$\beta_{i}:V_{i+1}\rightarrow V_{i}$ coincides with the affine modification
of $\mathrm{Spec}(A[\tilde{y}_{i},\tilde{z}_{i},\tilde{u}_{i}])$
with center at the ideal $(x,\tilde{u}_{i})$ and principal divisor
$x$, that is, with the birational $S$-morphism induced by the homomorphism
of $A$-algebra 
\[
\beta_{i}^{*}:A[\tilde{y}_{i+1},\tilde{z}_{i+1},\tilde{u}_{i+1}]\rightarrow A[\tilde{y}_{i},\tilde{z}_{i},\tilde{u}_{i}],\;(\tilde{y}_{i+1},\tilde{z}_{i+1},\tilde{u}_{i+1})\mapsto(\tilde{y}_{i},\tilde{z}_{i},x\tilde{u}_{i}).
\]
By construction, $\beta_{i}$ is equivariant for the $\mathbb{G}_{a,S}$-actions
$\tilde{\sigma}_{i+1}$ and $\overline{\sigma}_{i}$ generated respectively
by the locally nilpotent $A$-derivations $\tilde{\partial}_{i+1}$
of $A[\tilde{y}_{i+1},\tilde{z}_{i+1},\tilde{u}_{i+1}]$ and $\overline{\partial}_{i}=x\tilde{\partial}_{i}$
of $A[\tilde{y}_{i},\tilde{z}_{i},\tilde{u}_{i}]$. Furthermore, since
$\tilde{\rho}_{i}:V_{i}\rightarrow\tilde{\mathfrak{X}}_{i}$ is also
$\mathbb{G}_{a,S}$-invariant for the action $\overline{\sigma}_{i}$,
the morphism $\tilde{\rho}_{i}\circ\beta_{i}:V_{i+1}\rightarrow\tilde{\mathfrak{X}}_{i}$
is $\mathbb{G}_{a,S}$-invariant, whence descends to a morphism $\tilde{\beta}_{i}:\tilde{\mathfrak{X}}_{i+1}\rightarrow\tilde{\mathfrak{X}}_{i}$.
Since the latter restricts to an isomorphism over the generic point
of $S$, it remains to check that it is also an isomorphism in a neighborhood
of every point $p\in\tilde{\mathfrak{X}}_{i}$ lying over the closed
point $\mathfrak{m}$ of $S$. Let $f:U=\mathrm{Spec}(B)\rightarrow\tilde{\mathfrak{X}}_{i}$
be an affine \'etale neighborhood of such a point $p\in\tilde{\mathfrak{X}}_{i}$
over which $\tilde{\rho}_{i}:V_{i}\rightarrow\tilde{\mathfrak{X}}_{i}$
becomes trivial, say $V_{i}\times_{\tilde{\mathfrak{X}}_{i}}U\simeq\mathbb{A}_{U}^{1}=\mathrm{Spec}(B[\tilde{v}_{i}])$.
The $\mathbb{G}_{a,S}$-action on $V_{i}$ generated by $\overline{\partial}_{i}$
lifts to the $\mathbb{G}_{a,U}$-action on $\mathbb{A}_{U}^{1}$ generated
by the locally nilpotent $B$-derivation $x\partial_{\tilde{v}_{i}}$
and since $\beta_{i}:V_{i+1}\rightarrow V_{i}$ is the affine modification
of $V_{i}$ with center at the zero section of the restriction of
$\tilde{\rho}_{i}:V_{i}\rightarrow\tilde{\mathfrak{X}}_{i}$ over
the closed point of $S$, we have a commutative diagram \[\xymatrix@R=12pt@C=11pt{ & V_{i+1} \ar'[d][dd]_(.3){\tilde{\rho}_{i+1}} \ar[dl]_{\beta_i} && \mathbb{A}^1_U \ar[ll] \ar[dd]^{\mathrm{pr}_U} \ar[dl]_{\delta_i} \\ V_i \ar[dd]_{\tilde{\rho}_i} && \mathbb{A}^1_U \ar[ll] \ar[dd]^(.3){\mathrm{pr}_U} \\ & \tilde{\mathfrak{X}}_{i+1} \ar[dl]_{\tilde{\beta}_i} && U \ar@{=}[dl] \ar'[l][ll] \\ \tilde{\mathfrak{X}}_i && U \ar[ll]_{f} }\]in
which the top and front squares are cartesian, and where the morphism
$\delta_{i}:\mathbb{A}_{U}^{1}=\mathrm{Spec}(B[\tilde{v}_{i+1}])\rightarrow\mathbb{A}_{U}^{1}=\mathrm{Spec}(B[\tilde{v}_{i}])$
is defined by the $B$-algebras homomorphism $B[\tilde{v}_{i}]\rightarrow B[\tilde{v}_{i+1}]$,
$\tilde{v}_{i}\mapsto x\tilde{v}_{i+1}$. The latter is equivariant
for the action on $\mathrm{Spec}(B[\tilde{v}_{i+1}])$ generated by
the locally nilpotent $B$-derivation $\partial_{\tilde{v}_{i+1}}$
and we conclude that $\mathrm{pr}_{2}:\mathbb{A}_{U}^{1}\simeq\mathbb{A}_{U}^{1}\times_{V_{i}}V_{i+1}\rightarrow V_{i+1}$
is an \'etale trivialization of the $\mathbb{G}_{a,S}$-action induced
by $\tilde{\sigma}_{i+1}$ on the open sub-scheme $(\tilde{\rho}_{i}\circ\beta_{i})^{-1}(f(U))$
of $V_{i+1}$. This implies in turn that $U\times_{\tilde{\mathfrak{X}}_{i}}\tilde{\mathfrak{X}}_{i+1}\simeq U$,
whence that $\tilde{\beta}_{i}:\tilde{\mathfrak{X}}_{i+1}\rightarrow\tilde{\mathfrak{X}}_{i}$
is an isomorphism in a neighborhood of $p\in\tilde{\mathfrak{X}}_{i}$
as desired. 

The second assertion is a direct consequence of the fact that properness
and global equivariant triviality of $\sigma$ and $\tilde{\sigma}$
are respectively equivalent to the separatedness and the affineness
of the geometric quotients $\mathfrak{X}\simeq\tilde{\mathfrak{X}}$.
\end{proof}
\begin{parn} Summing up, we are now reduced to proving that a proper
$\mathbb{G}_{a,S}$-action on $\mathbb{A}_{S}^{3}$ generated by an
$A$-derivation 
\[
\partial=x^{n}\partial_{y}+q(y)\partial_{z}+p(y,z)\partial_{u}
\]
of $A[y,z,u]$, such that $n>0$ and $q(y)\in A[y]$ has non constant
residue class in $\kappa[y]$, has affine geometric quotient $\mathfrak{X}=\mathbb{A}_{S}^{3}/\mathbb{G}_{a,S}$.
This will be done in two steps in the next sections: we will first
establish that a proper $\mathbb{G}_{a,S}$-action as above is conjugate
to one generated by a special type of $A$-derivation called \emph{twin-triangular.}
Then we will prove in section \ref{sec:Twin-Triviality} that proper
twin-triangular $\mathbb{G}_{a,S}$-actions on $\mathbb{A}_{S}^{3}$
do indeed have affine geometric quotients. 

\end{parn}

\section{Reduction to twin-triangular actions}

We keep the same notation as in \S \ref{par:local_notation} above,
namely $A$ is a discrete valuation ring containing a field of characteristic
$0$, with maximal ideal $\mathfrak{m}$, residue field $\kappa=A/\mathfrak{m}$,
and uniformizing parameter $x\in\mathfrak{m}$. We let again $S=\mathrm{Spec}(A)$.

We call an $A$-derivation $\partial$ of $A[y,z,u]$ \emph{twin-triangulable}
if there exists a coordinate system $(y,z_{+},z_{-})$ of $A[y,z,u]$
over $A[y]$ in which the conjugate of $\partial$ is \emph{twin-triangular},
that is, has the form $x^{n}\partial_{y}+p_{+}(y)\partial_{z_{+}}+p_{-}(y)\partial_{z_{-}}$
for certain polynomials $p_{\pm}(y)\in A[y]$. This section is devoted
to the proof of the following intermediate characterization of proper
triangular $\mathbb{G}_{a,S}$-actions: 
\begin{prop}
\label{prop:Main-Local} With the notation above, let $\partial$
by an $A$-derivation of $A[y,z,u]$ of the form 
\[
\partial=x^{n}\partial_{y}+q(y)\partial_{z}+p(y,z)\partial_{u}
\]
where $n>0$ and where $q(y)\in A[y]$ has non constant residue class
in $\kappa[y]$. If the $\mathbb{G}_{a,S}$-action on $\mathbb{A}_{S}^{3}=\mathrm{Spec}(A[y,z,u])$
generated by $\partial$ is proper, then $\partial$ is twin-triangulable.
\end{prop}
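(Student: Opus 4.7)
The plan is to argue by contrapositive via an effective non-properness criterion for triangular $\mathbb{G}_{a,S}$-actions: I will show that if $\partial$ cannot be put into twin-triangular form by any $A$-automorphism of $A[y,z,u]$, then $\sigma$ is not proper. The first move is to translate twin-triangulability into an algebraic obstruction. Any $A[y]$-automorphism of $A[y,z,u]$ preserving the condition $\partial z \in A[y]$ must act on the $u$-coordinate by $u \mapsto u - h(y,z)$ for some $h \in A[y,z]$ up to invertible affine changes which are easily normalised away; the hypothesis that $q(y)$ has non-constant residue in $\kappa[y]$ is used here to rule out nontrivial deformations of $z$ which would still land in triangular form. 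Hence $\partial$ is twin-triangulable precisely when the class of $p(y,z)$ in the $A$-module
\[
M \;=\; A[y,z] \big/ \bigl(A[y] + \overline{\partial}(A[y,z])\bigr), \qquad \overline{\partial} = x^n \partial_y + q(y)\partial_z,
\]
vanishes. Writing $p = \sum_r p_r(y)z^r$ and $h = \sum_{j \ge 1} \alpha_j(y) z^j$, this class is computed by the finite triangular system
\[
x^n \alpha_r'(y) + (r+1)\, q(y)\, \alpha_{r+1}(y) = p_r(y), \qquad r \ge 1,
\]
solved from the top index down. Because $\overline{\partial}$ becomes surjective after inverting $x$, $M$ is $x$-power torsion, so non-twin-triangulability survives as a genuine residue modulo some minimal $x^{\nu}$, and I extract from it a distinguished obstruction monomial $y^m z^r$ with $r \ge 1$ whose coefficient in $\kappa$ is a unit.

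Given this obstruction, non-properness of $\sigma$ is produced by adapting the explicit analytic argument sketched for $\sigma_1$ in the introduction. After base change to a completion of $A$ with uniformizer specialised to $x = x_j \in \mathbb{C}$ tending to $0$, I will choose $\mathbb{C}$-points $P_j = (x_j, y_j, z_j, u_j)$ with $y_j, z_j$ given by prescribed negative fractional powers of $x_j$ and orbit times $t_j \to \infty$ calibrated so that the leading $t_j$-contributions in the first three coordinates of $\sigma(t_j, P_j)$ cancel against the chosen powers of $x_j$, keeping those three coordinates bounded. The exponents are dictated precisely by the invariant $\nu$, the bidegree $(m,r)$ of the obstruction monomial, and the integer $n$; the unit residue class then forces the fourth coordinate of $\sigma(t_j, P_j)$ to converge to a specific nonzero limit which could not equal the fourth coordinate of any orbit point reached by a bounded time. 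The sequence $\{(P_j, \sigma(t_j, P_j))\}$ together with its limit is then a compact subset of $\mathbb{A}_S^3(\mathbb{C}) \times \mathbb{A}_S^3(\mathbb{C})$ whose preimage under $\Phi = (\mathrm{pr}_2, \sigma)$ is unbounded, so $\sigma$ is not proper in the analytic, hence not in the algebraic, category.

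The main obstacle will be aligning these algebraic and analytic pictures. On the algebraic side, one must check that the obstruction class in $M$ is intrinsic: no iterated sequence of triangular coordinate changes can lower the exponent $\nu$ or absorb the monomial $y^m z^r$. The division by $q(y)$ in the recursion is controlled by the hypothesis that $\bar q \in \kappa[y]$ is non-constant, but careful bookkeeping of $x$-adic valuations and $y$-degrees is needed to confirm minimality of $\nu$ and to treat the case where $\bar q$ has multiple or repeated roots. On the analytic side, the choice of fractional exponents for $x_j, y_j, z_j$ and $t_j$ must be made simultaneously for the three lower coordinates, and one must verify that in the fourth coordinate the distinguished obstruction monomial, after the dust of cross-cancellations settles in the Taylor expansion of $\exp(t\partial)$ applied to $u$, produces a strictly dominant finite limit while all other terms vanish; the hypothesis $n > 0$ is essential here, as it is what allows negative powers of $x_j$ to appear in $y_j, z_j$ without driving the first three coordinates of $\sigma(t_j, P_j)$ out to infinity.
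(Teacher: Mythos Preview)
Your overall architecture---contrapositive, reduce to a normal form, then exhibit non-properness---matches the paper's. Your module $M = A[y,z]/(A[y] + \overline{\partial}(A[y,z]))$ is a repackaging of the paper's $\sharp$-reduction: they minimise $\deg_z(\partial u)$ over all $A[y,z]$-coordinate changes of $u$ and show that if the resulting leading coefficient $p_\ell(y)$ (with $\ell \ge 1$) survives, it is \emph{not} congruent modulo $x^n$ to anything of the form $q(y)f(Q(y))$, where $Q = \int q$. This is sharper than your ``distinguished obstruction monomial $y^m z^r$'': the obstruction lives in $A[y]/(x^n, q(y)A[Q(y)])$, not in a single monomial, and that finer structure is exactly what the non-properness step consumes.

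The genuine gap is in your second step. The analytic limit argument does not make sense over an arbitrary discrete valuation ring $A$ containing a field of characteristic $0$: there is no way to ``specialise the uniformiser to $x_j \in \mathbb{C}$'' unless the residue field already embeds in $\mathbb{C}$, and properness of $\Phi$ over $S$ is not something one can test after such pointwise specialisations anyway. The paper avoids this entirely with an algebraic criterion: compactify $\mathbb{G}_{a,S}$ to $\mathbb{P}^1_S$, so that non-properness of $\sigma$ becomes the statement that the closure of the graph of $(\mathrm{pr}_2,\sigma)$ restricted to $\{z=0\}$ meets the hyperplane at infinity $\{w_1=0\}$. Writing out the equations of this closure, the boundary intersection is governed by the polynomial
\[
\Gamma_\ell(y_1,y_2) \;=\; \int_{y_1}^{y_2} p_\ell(y)\,(Q(y)-Q(y_1))^{\ell}\,dy \in A[y_1,y_2],
\]
and the whole argument reduces to two purely algebraic facts: (a) $\Gamma_\ell \notin (x^n,\,Q(y_2)-Q(y_1))$ precisely when $p_\ell$ is not of the forbidden form $q\cdot f(Q) + x^n g$ (proved by iterated integration by parts and faithfully flat descent along $A[Q(y)] \hookrightarrow A[y]$), and (b) $(y_2-y_1)^{-\ell-1}\Gamma_\ell$ is never a unit modulo $(x^n,\,(y_2-y_1)^{-1}(Q(y_2)-Q(y_1)))$ (a B\'ezout count in $\mathbb{P}^2_{\overline\kappa}$). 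Your proposal has no analogue of either ingredient, and the ``dominance of the obstruction monomial after the dust settles'' is precisely where a naive exponent-matching attempt would founder: the cross-terms in $\exp(t\partial)u$ organise themselves into the integrals $\Gamma_r$, and controlling them requires the structural lemma above, not a single monomial.
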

\noindent The proof given below proceeds in two steps: we first construct
a coordinate $\tilde{u}$ of $A[y,z,u]$ over $A[y,z]$ with the property
that $\partial\tilde{u}=\tilde{p}(y,z)$ is either a polynomial in
$y$ only or its leading term $\tilde{p}_{\ell}(y)$ as a polynomial
in $z$ has a very particular form. In the second case, we exploit
the properties of $\tilde{p}_{\ell}(y)$ to show that the $\mathbb{G}_{a,S}$-action
generated by $\partial$ is not proper.

\subsection{The $\sharp$-reduction of a triangular $A$-derivation}

The conjugate of an $A$-derivation $\partial=x^{n}\partial_{y}+q(y)\partial_{z}+p(y,z)\partial_{u}$
of $A[y,z,u]$ as in Proposition \ref{prop:Main-Local} by an isomorphism
of $A[y,z]$-algebras $\psi:A[y,z][\tilde{u}]\stackrel{\sim}{\rightarrow}A[y,z][u]$
is again triangular of the form 
\[
\psi^{-1}\partial\psi=x^{n}\partial_{y}+q(y)\partial_{z}+\tilde{p}(y,z)\partial_{\tilde{u}}
\]
for some polynomial $\tilde{p}(y,z)\in A[y,z]$. In particular, we
may choose from the very beginning a coordinate system of $A[y,z,u]$
over $A[y,z]$ with the property that the degree of $\partial u\in A[y,z]$
with respect to $z$ is minimal among all possible conjugates $\psi^{-1}\partial\psi$
of $\partial$ as above. In what follows, we will say for short that
such a derivation $\partial$ is \emph{$\sharp$-reduced} with respect
to the coordinate system $(y,z,u)$. Letting $Q(y)=\int_{0}^{y}q(\tau)d\tau\in A[y]$,
this property can be characterized effectively as follows: 
\begin{lem}
Let $\partial=x^{n}\partial_{y}+q(y)\partial_{z}+p(y,z)\partial_{u}$
be a $\sharp$-reduced derivation of $A[y,z,u]$ as in Proposition
\ref{prop:Main-Local}. If $\partial$ is not twin-triangular $($i.e.
$p(y,z)=p_{0}(y)\in A[y]$$)$ then the leading term $p_{\ell}(y)$,
$\ell\geq1$, of $p(y,z)$ as a polynomial in $z$ is not congruent
modulo $x^{n}$ to a polynomial of the form $q(y)f(Q(y))$ for some
$f(\tau)\in A[\tau]$. \end{lem}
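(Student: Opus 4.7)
The plan is a proof by contradiction: assume there exists $f(\tau)\in A[\tau]$ with $p_{\ell}(y)\equiv q(y)f(Q(y))\pmod{x^{n}}$, say $p_{\ell}(y)=q(y)f(Q(y))+x^{n}r(y)$ with $r(y)\in A[y]$, and exhibit an $A[y,z]$-algebra automorphism of $A[y,z,u]$ under which $\partial$ becomes a triangular derivation $x^{n}\partial_{y}+q(y)\partial_{z}+\tilde{p}(y,z)\partial_{\tilde{u}}$ with $\deg_{z}\tilde{p}<\ell$, contradicting the $\sharp$-reducedness of $\partial$. I will search for such an automorphism in the form $\tilde{u}\mapsto u-g(y,z)-R(y)z^{\ell}$, so the task reduces to constructing $g\in A[y,z]$ with $\partial g = q(y)f(Q(y))z^{\ell}+(\text{terms of }z\text{-degree}<\ell)$ and then absorbing the residual error $x^{n}r(y)z^{\ell}$ via a suitable $R(y)\in A[y]$.

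The construction of $g$ is the core of the argument. I will use the ansatz
\[
g=\sum_{k=0}^{d}c_{k}\,f^{(k)}(Q(y))\,z^{\ell+1+k}, \qquad d=\deg f,
\]
with $c_{k}\in A$ to be chosen. Using $\partial\!\left(F(Q(y))\right)=x^{n}F'(Q(y))q(y)$ and $\partial z=q(y)$, one computes directly that the $z^{\ell+j+1}$-coefficient of $\partial g$ (for $j\geq 0$) equals $\bigl[x^{n}c_{j}+(\ell+j+2)c_{j+1}\bigr]\,f^{(j+1)}(Q(y))\,q(y)$, while the $z^{\ell}$-coefficient equals $(\ell+1)c_{0}\,f(Q(y))\,q(y)$. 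The recursion
\[
c_{0}=\frac{1}{\ell+1}, \qquad c_{j+1}=-\frac{x^{n}c_{j}}{\ell+j+2}
\]
therefore forces the $z^{\ell}$-coefficient of $\partial g$ to equal $q(y)f(Q(y))$ while making every higher $z$-coefficient vanish. Since $f^{(d+1)}=0$, the ansatz truly terminates at $k=d$ and $g$ is a polynomial.

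With $g$ in hand, $\partial(u-g)=\bigl(p_{\ell}-qf(Q)\bigr)z^{\ell}+(\text{lower})=x^{n}r(y)z^{\ell}+(\text{lower})$. The formal antiderivative $R(y):=\int_{0}^{y}r(\tau)\,d\tau\in A[y]$ satisfies $\partial(R(y)z^{\ell})=x^{n}r(y)z^{\ell}+\ell R(y)q(y)z^{\ell-1}$, so setting $\tilde{u}=u-g-R(y)z^{\ell}$ produces $\partial\tilde{u}$ of $z$-degree at most $\ell-1$, which yields the desired contradiction.

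The main obstacle is ensuring that every ingredient — the $c_{k}$ and the antiderivative $R$ — actually lies in $A[y]$ rather than in some localization. The denominators $(\ell+1)(\ell+2)\cdots(\ell+1+k)$ and the integration constants introduced by $R$ are all integers, and both issues are resolved by the standing hypothesis that $A$ contains a field of characteristic zero, giving $\mathbb{Q}\subset A$. Termination of the ansatz is automatic from $\deg f<\infty$.
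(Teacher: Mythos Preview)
Your proof is correct and is essentially the same as the paper's: both construct the change of variable $\tilde{u}=u-R(y)z^{\ell}-\sum_{k=0}^{\deg f}c_{k}f^{(k)}(Q(y))z^{\ell+1+k}$ with $c_{k}=(-1)^{k}x^{kn}/\prod_{j=0}^{k}(\ell+1+j)$ and verify that $\deg_{z}\partial\tilde{u}\leq\ell-1$. The paper simply writes down the closed-form coefficients whereas you derive them via the recursion, but the substitutions and the resulting contradiction are identical.
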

\begin{proof}
Suppose that $p(y,z)=\sum_{r=0}^{\ell}p_{r}(y)z^{r}$ with $\ell\geq1$
and that $p_{\ell}(y)=q(y)f(Q(y))+x^{n}g(y)$ for some polynomials
$f(\tau),g(\tau)\in A[\tau]$. Then letting $G(y)=\int_{0}^{y}g(\tau)d\tau$
and 
\[
\tilde{u}=u-G(y)z^{\ell}-\sum_{k=0}^{\deg f}\frac{(-1)^{k}}{\prod_{j=0}^{k}(\ell+1+j)}f^{(k)}(Q(y))x^{kn}z^{\ell+1+k},
\]
one checks by direct computation that 
\[
\partial\tilde{u}=\sum_{r=0}^{\ell-2}p_{r}(y)z^{r}+\left(p_{\ell-1}(y)-G(y)q(y)\right)z^{\ell-1}.
\]
Thus $(y,z,\tilde{u}$) is a coordinate system of $A[y,z,u]$ over
$A[y,z]$ in which the image of $\tilde{u}$ by the conjugate of $\partial$
has degree $\leq\ell-1$, a contradiction to the $\sharp$-reducedness
of $\partial$. 
\end{proof}
\noindent To prove Proposition \ref{prop:Main-Local}, it remains
to show that a proper $\mathbb{G}_{a,S}$-action on $\mathbb{A}_{S}^{3}$
generated by $\sharp$-reduced $A$-derivation of $A[y,z,u]$ is twin-triangular.
This is done in the next sub-section.

\subsection{A non-valuative criterion for non-properness}

\indent\newline\noindent To disprove the properness of an algebraic
action $\sigma:\mathbb{G}_{a,S}\times_{S}E\rightarrow E$ of $\mathbb{G}_{a,S}$
on an $S$-scheme $E$, it suffices in principle to check that the
image of $\Phi=(\mathrm{pr}_{2},\sigma):\mathbb{G}_{a}\times_{S}E\rightarrow E\times_{S}E$
is not closed. However, this image turns out to be complicated to
determine in general, and it is more convenient for our purpose to
consider the following auxiliary construction: letting $j:\mathbb{G}_{a,S}\simeq\mathrm{Spec}(\mathcal{O}_{S}[t])\hookrightarrow\mathbb{P}_{S}^{1}=\mathrm{Proj}(\mathcal{O}_{S}[w_{0},w_{1}])$,
$t\mapsto[t:1]$ be the natural open immersion, the properness of
the projection $\mathrm{pr}_{E\times_{S}E}:\mathbb{P}_{S}^{1}\times_{S}E\times_{S}E\rightarrow E\times_{S}E$
implies that $(\mathrm{p}_{2},\sigma)$ is proper if and only if $\varphi=(j\circ\mathrm{pr}_{1},\mathrm{pr}_{2},\sigma):\mathbb{G}_{a,S}\times_{S}E\rightarrow\mathbb{P}_{S}^{1}\times_{S}E\times_{S}E$
is proper, hence a closed immersion. Therefore the non properness
of $\sigma$ is equivalent to the fact that the closure of $\mathrm{Im}(\varphi)$
in $\mathbb{P}_{S}^{1}\times_{S}E\times_{S}E$ intersects the ``boundary''
$\{w_{1}=0\}$ in a nontrivial way.

\begin{parn} Now let $\sigma:\mathbb{G}_{a,S}\times_{S}\mathbb{A}_{S}^{3}\rightarrow\mathbb{A}_{S}^{3}$
be the $\mathbb{G}_{a,S}$-action generated by a non twin-triangular
$\sharp$-reduced $A$-derivation $\partial=x^{n}\partial_{y}+q(y)\partial_{z}+p(y,z)\partial_{u}$
of $A[y,z,u]$ and let 
\[
\varphi=(j\circ\mathrm{pr}_{1},\mathrm{pr}_{2},\mu):\mathbb{G}_{a,S}\times_{S}\mathbb{A}_{S}^{3}=\mathrm{Spec}(A[t][y,z,u])\rightarrow\mathbb{P}_{S}^{1}\times_{S}\mathbb{A}_{S}^{3}\times_{S}\mathbb{A}_{S}^{3}
\]
be the corresponding immersion. To disprove the properness of $\sigma$,
it is enough to check that the image by $\varphi$ of the closed sub-scheme
$H=\left\{ z=0\right\} \simeq\mathrm{Spec}(A[t][y,u])$ of $\mathbb{G}_{a,S}\times_{S}\mathbb{A}_{S}^{3}$
is not closed in $\mathbb{P}_{S}^{1}\times_{S}\mathbb{A}_{S}^{3}\times_{S}\mathbb{A}_{S}^{3}$.
After identifying $A[y,z,u]\otimes_{A}A[y,z,u]$ with the polynomial
ring $A[y_{1},y_{2},z_{1},z_{2},u_{1},u_{2}]$ in the obvious way,
the image of $H$ by $(\mathrm{pr}_{1},\mathrm{pr}_{2},\sigma):\mathbb{G}_{a,S}\times_{S}\mathbb{A}_{S}^{3}\rightarrow\mathbb{A}_{S}^{1}\times_{S}\mathbb{A}_{S}^{3}\times_{S}\mathbb{A}_{S}^{3}$
is equal to the closed sub-scheme of $\mathrm{Spec}(A[t][y_{1},y_{2},z_{1},z_{2},u_{1},u_{2}])$
defined by the following system of equations 
\[
\begin{cases}
y_{2} & =y_{1}+x^{n}t\\
z_{1} & =0\\
z_{2} & =x^{-n}(Q(y_{1}+x^{n}t)-Q(y_{1}))=(y_{1}-y_{2})^{-1}(Q(y_{2})-Q(y_{1}))t\\
u_{2} & =u_{1}+x^{-n}\int_{0}^{t}p(y_{1}+x^{n}\tau)(Q(y_{1}+x^{n}\tau)-Q(y_{1})))d\tau.
\end{cases}
\]
Letting $p(y,z)=\sum_{r=0}^{\ell}p_{r}(y)z^{r}$ with $\ell\geq1$
and 
\[
\Gamma_{r}(y_{1},y_{2})=\int_{y_{1}}^{y_{2}}p_{r}(\xi)(Q(\xi)-Q(y_{1}))^{r}d\xi\in A[y_{1},y_{2}],\quad r=0,\ldots,\ell,
\]
the last equality can be re-written modulo the first ones in the form
\begin{eqnarray*}
u_{2} & = & u_{1}+\sum_{r=0}^{\ell}x^{-nr}\int_{0}^{t}p_{r}(y_{1}+x^{n}\tau)(Q(y_{1}+x^{n}\tau)-Q(y_{1}))^{r}d\tau\\
 & = & u_{1}+t(y_{2}-y_{1})^{-1}\sum_{r=0}^{\ell}x^{-nr}\int_{y_{1}}^{y_{2}}p_{r}(\xi)(Q(\xi)-Q(y_{1}))^{r}d\xi\\
 & = & u_{1}+\sum_{r=0}^{\ell}\left((y_{2}-y_{1})^{-r-1}\Gamma_{r}(y_{1},y_{2})\right)t^{r+1}.
\end{eqnarray*}
It follows that the closure $V$ of $\varphi(H)$ is contained in
the closed sub-scheme $W$ of $\mathbb{P}_{S}^{1}\times_{S}\mathbb{A}_{S}^{3}\times_{S}\mathbb{A}_{S}^{3}$
defined by the equations $z_{1}=0$ and 
\[
\begin{cases}
(y_{2}-y_{1})w_{1}-x^{n}w_{0} & =0\\
w_{1}z_{2}-(y_{2}-y_{1})^{-1}(Q(y_{2})-Q(y_{1}))w_{0} & =0\\
w_{1}^{\ell+1}(u_{2}-u_{1})-\sum_{r=0}^{\ell}\left((y_{2}-y_{1})^{-r-1}\Gamma_{r}(y_{1},y_{2})\right)w_{0}^{r+1}w_{1}^{\ell-r} & =0.
\end{cases}
\]
We further observe that $W$ is irreducible, whence equal to $V$,
provided that $\Gamma_{\ell}(y_{1},y_{2})\in A[y_{1},y_{2}]$ does
not belong to the ideal generated by $x^{n}$ and $Q(y_{2})-Q(y_{1})$.
If so, then $W=V$ intersects $\{w_{1}=0\}$ along a closed sub-scheme
$Z$ isomorphic to the spectrum of the following algebra:

\[
\left(A[y_{1},y_{2}]/(x^{n},(y_{2}-y_{1})^{-1}(Q(y_{2})-Q(y_{1})),(y_{2}-y_{1})^{-\ell-1}\Gamma_{\ell}(y_{1},y_{2}))\right)[z_{2},u_{1},u_{2}].
\]
Since by virtue of the $\sharp$-reducedness assumption $p_{\ell}(y)$
is not of the form $q(y)f(Q(y))+x^{n}g(y)$, the non properness of
$\sigma:\mathbb{G}_{a,S}\times_{S}\mathbb{A}_{S}^{3}\rightarrow\mathbb{A}_{S}^{3}$
is then a consequence of the following Lemma which guarantees precisely
that $\Gamma_{\ell}(y_{1},y_{2})\not\in(x^{n},Q(y_{2})-Q(y_{1}))A[y_{1},y_{2}]$
and that $Z$ is not empty. 

\end{parn} 
\begin{lem}
Let $q(y)\in A[y]$ be a polynomial with non constant residue class
in $\kappa[y]$ and let $Q(y)=\int_{0}^{y}q(\tau)d\tau$. For a polynomial
$p(y)\in A[y]$ and an integer $\ell\geq1$, the following holds:

a) The polynomial $\Gamma_{\ell}(y_{1},y_{2})=\int_{y_{1}}^{y_{2}}p(y)(Q(y)-Q(y_{1}))^{\ell}dy$
belongs to the ideal $(x^{n},Q(y_{2})-Q(y_{1}))$ if and only if $p(y)$
can be written in the form $q(y)f(Q(y))+x^{n}g(y)$ for certain polynomials
$f(\tau),g(\tau)\in A[\tau]$. 

b) The polynomial $(y_{2}-y_{1})^{-\ell-1}\Gamma_{\ell}(y_{1},y_{2})$
is not invertible modulo the ideal $(x^{n},(y_{2}-y_{1})^{-1}(Q(y_{2})-Q(y_{1})))$. \end{lem}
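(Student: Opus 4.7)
The plan is to prove part (a) by a double induction, outer on $\ell$ via integration by parts and inner on $n$ via the UFD structure of $A[y_1,y_2]$, and to prove part (b) by a direct local analysis near $y_2 = y_1$ based on Taylor expansion. The ``if'' direction of (a) is immediate: writing $p = q f(Q) + x^n g$ and substituting $u = Q(y) - Q(y_1)$ in the first summand of $\Gamma_\ell$ gives $\int_0^{Q(y_2) - Q(y_1)} f(u + Q(y_1)) u^\ell\,du$, which vanishes at $Q(y_2) = Q(y_1)$ and hence lies in $(Q(y_2) - Q(y_1))$; the $x^n g$ contribution clearly lies in $(x^n)$.

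\textbf{Converse of part (a).} For the outer induction, integration by parts with $P(y) = \int_0^y p(\tau)\,d\tau$ yields
\[
\Gamma_\ell \;=\; P(y_2)\bigl(Q(y_2)-Q(y_1)\bigr)^\ell \;-\; \ell\,\tilde{\Gamma}_{\ell-1},
\]
where $\tilde{\Gamma}_{\ell-1}$ has the shape of $\Gamma_{\ell-1}$ with $p$ replaced by $Pq$. Modulo $(x^n, Q(y_2)-Q(y_1))$ the first term vanishes, so the induction hypothesis on $\ell$ applied to $Pq$ forces $Pq \in q A[Q] + x^n A[y]$; since $\bar q \in \kappa[y]$ is non-constant, $q$ is a non-zero-divisor in $(A/x^n)[y]$ (shown by successive reductions modulo $x$), and cancelling $q$ gives $P \in A[Q] + x^n A[y]$, whence $p = P' \in q A[Q] + x^n A[y]$. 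The base case $\ell = 0$ asserts that $P(y_2) - P(y_1) \in (x^n, Q(y_2) - Q(y_1))$ implies $P \in A[Q] + x^n A[y]$, and here I induct on $n$: for $n = 1$, the ring $\kappa[y]$ is a free $\kappa[\bar Q]$-module of rank $d = \deg \bar Q$ with basis $\{1, y, \ldots, y^{d-1}\}$, so expanding $\bar P = \sum_{j=0}^{d-1} c_j(\bar Q(y))\,y^j$, the difference $\bar P(y_2) - \bar P(y_1)$ reduces modulo $(\bar Q(y_2)-\bar Q(y_1))$ to $\sum_{j \ge 1} c_j(\bar Q(y_1))(y_2^j - y_1^j)$, whose $y_2$-degree is $< d$ while any nonzero element of that ideal has $y_2$-degree $\ge d$, forcing $c_j = 0$ for $j \ge 1$; for $n > 1$, the $(n-1)$-level conclusion supplies $P = F(Q) + x^{n-1} G$, and using that $x$ is prime in the UFD $A[y_1, y_2]$ and coprime to $Q(y_2) - Q(y_1)$ one cancels $x^{n-1}$ to extract $G(y_2) - G(y_1) \in (x, Q(y_2) - Q(y_1))$, then finishes by the $n = 1$ base applied to $G$.

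\textbf{Part (b).} Setting $T = y_2 - y_1$ and writing $Q(y_1 + T) - Q(y_1) = T\,R(T, y_1)$ with $R(0, y_1) = q(y_1)$, Taylor expansion of the integrand yields $\Gamma_\ell = T^{\ell+1} H(T, y_1)$ for some $H \in A[y_1, T]$ with $H(0, y_1) = p(y_1)\,q(y_1)^\ell/(\ell+1)$; the factor $q(y_1)^\ell$ reflects the $s^\ell$ vanishing order of the integrand at $s = 0$. In the quotient ring $B := (A/x^n)[y_1, T]/(R(T, y_1))$, which after the substitution $y_2 = y_1 + T$ is exactly $A[y_1, y_2]/(x^n, (y_2 - y_1)^{-1}(Q(y_2) - Q(y_1)))$, the defining relation reads $q(y_1) + T\cdot(\text{polynomial}) = 0$, so $q(y_1) \in (T)\cdot B$; hence $H(0, y_1) \in (T)\cdot B$ (using $\ell \ge 1$) and therefore $H(T, y_1) \in (T) \cdot B$. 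Since $B/(T) \cong (A/x^n)[y_1]/(q(y_1))$ is nonzero (the residue of $q$ in $\kappa[y_1]$ is a non-unit), $(T)$ is a proper ideal of $B$, so $(y_2 - y_1)^{-\ell-1}\Gamma_\ell = H$ is not a unit in $B$. The most delicate point of the whole argument will be the $n = 1$ base in part (a): the degree-based vanishing argument there depends critically on the freeness of $\kappa[y]$ over $\kappa[\bar Q]$, which in turn requires the non-constancy of $\bar q$ assumed throughout.
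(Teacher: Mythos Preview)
Your proof is correct, and for both parts it takes a route that differs substantively from the paper's.

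For part (a), both arguments begin with the same integration-by-parts reduction to the case $\ell=0$, i.e.\ to the statement that $P(y_2)-P(y_1)\in(x^n,Q(y_2)-Q(y_1))$ forces $P\in A[Q]+x^nA[y]$. The paper settles this in one stroke by asserting that $A[y]$ is faithfully flat over $A[Q(y)]$ and invoking descent. You instead run an induction on $n$: the $n=1$ base uses the explicit free basis $1,y,\ldots,y^{d-1}$ of $\kappa[y]$ over $\kappa[\bar Q]$ and a $y_2$-degree count, while the inductive step peels off one power of $x$ using that $x$ is prime and coprime to $Q(y_2)-Q(y_1)$ in the UFD $A[y_1,y_2]$. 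Your version is more elementary and, incidentally, is insensitive to whether the leading coefficient of $Q$ over $A$ lies in $\mathfrak m$, a situation in which the flatness of $A[y]$ over $A[Q]$ is not automatic.

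For part (b) the two arguments are genuinely different. The paper reduces modulo $x$, passes to $\bar\kappa$, and shows via a B\'ezout-type count in $\mathbb P^2_{\bar\kappa}$ that the curves $\{(y_2-y_1)^{-\ell-1}\Gamma_\ell=0\}$ and $\{(y_2-y_1)^{-1}(Q(y_2)-Q(y_1))=0\}$ must meet in $\mathbb A^2_{\bar\kappa}$. Your argument is purely local at the diagonal: after setting $T=y_2-y_1$ you observe that the relation $R(T,y_1)=0$ already forces $q(y_1)\in(T)$ in the quotient ring, and since the constant term $H(0,y_1)$ carries the factor $q(y_1)^\ell$ with $\ell\ge1$, the whole of $H$ falls into the proper ideal $(T)$. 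This avoids any intersection theory and works directly over $A/x^n$ without passing to an algebraic closure.
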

\begin{proof}
For the first assertion, a sequence of $\ell$ successive integrations
by parts shows that 
\begin{eqnarray*}
\Gamma_{\ell}(y_{1},y_{2}) & = & \left[E_{1}(y)(Q(y)-Q(y_{1}))^{\ell}\right]_{y_{1}}^{y_{2}}-\ell\int_{y_{1}}^{y_{2}}E_{1}(y)q(y)(Q(y)-Q(y_{1}))^{\ell-1}dy\\
 & = & S(y_{1},y_{2})+(-1)^{\ell}\ell!\int_{y_{1}}^{y_{2}}E_{\ell}(y)q(y)dy\\
 & = & S(y_{1},y_{2})+(-1)^{\ell}\ell!(E_{\ell+1}(y_{2})-E_{\ell+1}(y_{1}))
\end{eqnarray*}
where $E_{k}$ is defined recursively by $E_{1}(y)=\int_{0}^{y}p(\tau)d\tau$
and $E_{k+1}(y)=\int_{0}^{y}E_{k}(\tau)q(\tau)d\tau$, and where $S(y_{1},y_{2})\in(Q(y_{2})-Q(y_{1}))A[y_{1},y_{2}]$.
So $\int_{y_{1}}^{y_{2}}p(y)(Q(y)-Q(y_{1}))^{r}dy$ belongs to $(x^{n},Q(y_{2})-Q(y_{1}))A[y_{1},y_{2}]$
if and only if $E_{\ell+1}(y_{2})-E_{\ell+1}(y_{1})$ belongs to this
ideal. 

Since the residue class of $Q(y)\in A[y]$ in $\kappa[y]$ is not
constant, it follows from the local criterion for flatness that $A[y]$
is a faithfully flat algebra over $A[Q(y)]$. By faithfully flat descent,
this implies in turn that the sequence 
\[
A[Q(y)]\hookrightarrow A[y]\stackrel{\cdot\otimes1-1\otimes\cdot}{\longrightarrow}A[y]\otimes_{A[\tau]}A[y]
\]
is exact whence, using the natural identification $A[y]\otimes_{A[\tau]}A[y]\simeq A[y_{1},y_{2}]/(Q(y_{2})-Q(y_{1}))$,
that a polynomial $F\in A[y]$ with $F(y_{2})-F(y_{1})$ belonging
to the ideal $(Q(y_{2})-Q(y_{1}))A[y_{1},y_{2}]$ has the form $F(y)=G(Q(y))$
for a certain polynomial $G(\tau)\in A[\tau]$. Thus $E_{\ell+1}(y_{2})-E_{\ell+1}(y_{1})$
belongs to $(x^{n},Q(y_{2})-Q(y_{1}))A[y_{1},y_{2}]$, if and only
if $E_{\ell+1}(y)$ is of the form $G(Q(y))+x^{n}R_{\ell+1}(y)$ for
some $G(\tau),R_{\ell+1}(\tau)\in A[\tau]$. This implies in turn
that $E_{\ell}(y)q(y)=G'(Q(y))q(y)+x^{n}R_{\ell+1}'(y)$ whence, since
$q(y)\in A[y]\setminus\mathfrak{m}A[y]$ is not a zero divisor modulo
$x^{n}$, that $E_{\ell}(y)=G'(Q(y))+x^{n}R_{\ell}(y)$ for a certain
$R_{\ell}(\tau)\in A[\tau]$. We conclude by induction that $E_{1}(y)=G^{(\ell+1)}(Q(y))+x^{n}R_{1}(y)$
and finally that $p(y)=G^{(\ell+2)}(Q(y))q(y)+x^{n}R(y)$ for a certain
$R(\tau)\in A[\tau]$. This proves a). 

The second assertion is clear in the case where $p(y)\in\mathfrak{m}A[y]$.
Otherwise, if $p(y)\in A[y]\setminus\mathfrak{m}A[y]$ then reducing
modulo $x$ and passing to the algebraic closure $\overline{\kappa}$
of $\kappa$, it is enough to show that if $q(y)\in\overline{\kappa}[y]$
is not constant and $p(y)\in\overline{\kappa}[y]$ is a nonzero polynomial
then for every $\ell\geq1$, the affine curves $C$ and $D$ in $\mathbb{A}_{\overline{\kappa}}^{2}=\mathrm{Spec}(\overline{\kappa}[y_{1},y_{2}])$
defined by the vanishing of the polynomials $\Theta(y_{1},y_{2})=(y_{2}-y_{1})^{-\ell-1}\int_{y_{1}}^{y_{2}}p(y)(Q(y)-Q(y_{1}))^{\ell}dy$
and $R(y_{1},y_{2})=(y_{2}-y_{1})^{-1}\int_{y_{1}}^{y_{2}}q(y)dy$
respectively always intersect each other. Suppose on the contrary
that $C\cap D=\emptyset$ and let $m=\deg q\geq1$ and $d=\deg p\geq0$.
Then the closures $\overline{C}$ and $\overline{D}$ of $C$ and
$D$ respectively in $\mathbb{P}_{\overline{\kappa}}^{2}=\mathrm{Proj}(\overline{\kappa}[y_{1},y_{2},y_{3}])$
intersect each others along a closed sub-scheme $Y$ of length $\deg\overline{C}\cdot\deg\overline{D}=m(d+\ell m)$
supported on the line $\{y_{3}=0\}\simeq\mathrm{Proj}(\overline{\kappa}[y_{1},y_{2}])$.
By definition, up to multiplication by a nonzero scalar, the top homogeneous
components of $R$ and $\Theta$ have the form $\prod_{i=1}^{m}(y_{2}-\zeta^{i}y_{1})$,
where $\zeta\in\overline{\kappa}$ is a primitive $(m+1)$-th root
of unity, and $(y_{2}-y_{1})^{\ell-1}\int_{y_{1}}^{y_{2}}y^{d}(y^{m+1}-y_{1}^{m+1})^{\ell}dy$
respectively. But on the other hand, we have for every $i=1,\ldots,m$ 

\[
\overline{\kappa}[y_{2}]/(y_{2}-\zeta^{i},(y_{2}-1)^{-r-1}\int_{1}^{y_{2}}y^{d}(y^{m+1}-1)^{r}dy)\simeq\overline{\kappa}[y_{2}]/(y_{2}-\zeta^{i},(\zeta^{i}-1)^{-r-1}\int_{1}^{\zeta^{i}}\tau^{d}(\tau^{m+1}-1)^{r}d\tau),
\]
and hence the length of the above algebra is either $1$ or $0$ depending
on whether $\int_{1}^{\zeta^{i}}\tau^{d}(\tau^{m+1}-1)d\tau\in\overline{\kappa}$
is zero or not. This implies that the length of $Y$ is at most equal
to $m$ and so the only possibility would be that $d=0$ and $\ell=m=1$,
i.e. $C$ and $D$ are parallel lines in $\mathbb{A}_{\overline{\kappa}}^{2}$.
But since $\int_{1}^{-1}(\tau^{2}-1)d\tau\neq0$, this last possibility
is also excluded. 
\end{proof}

\section{\label{sec:Twin-Triviality} Global equivariant triviality of twin-triangular
actions}

By virtue of Proposition \ref{prop:Main-Local}, every proper triangular
$\mathbb{G}_{a,S}$-action on $\mbox{\ensuremath{\sigma}:}\mathbb{G}_{a,S}\times_{S}\mathbb{A}_{S}^{3}\rightarrow\mathbb{A}_{S}^{3}$
on $\mathbb{A}_{S}^{3}$ is conjugate to one generated by a twin-triangular
$A$-derivation $\partial$ of $A[y,z_{+},z_{-}]$ of the form 
\[
\partial=x^{n}\partial_{y}+p_{+}(y)\partial_{z_{+}}+p_{-}(y)\partial_{z_{-}}
\]
for certain polynomials $p_{\pm}(y)\in A[y]$. So to complete the
proof of the Main Theorem, it remains to show the following generalization
of the main result in \cite{DubFin11}:
\begin{prop}
\label{prop:Twin-Loc-trivi} Let $S$ be the spectrum of discrete
valuation $A$ containing a field of characteristic $0$. Then a proper
twin-triangular $\mathbb{G}_{a,S}$-action on $\mathbb{A}_{S}^{3}$
has affine geometric quotient $\mathfrak{X}=\mathbb{A}_{S}^{3}/\mathbb{G}_{a,S}$. 
\end{prop}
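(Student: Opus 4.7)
The strategy is to exploit the two commuting translations $\tau_\pm$ generated by $\partial_{z_\pm}$, which commute with $\partial = x^n\partial_y + p_+(y)\partial_{z_+} + p_-(y)\partial_{z_-}$. These give two $\sigma$-equivariant projections $\pi_\pm : \mathbb{A}^{3}_S \to \mathbb{A}^{2}_S = \mathrm{Spec}(A[y,z_\pm])$, with induced triangular actions $\bar\sigma_\pm$ generated by $\bar\partial_\pm = x^n\partial_y + p_\pm(y)\partial_{z_\pm}$, and the cartesian square of the introduction presents $\mathfrak{X}$ as a $\mathbb{G}_{a,S}$-torsor over each stack $[\mathbb{A}^{2}_S/\bar\sigma_\pm]$. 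The $\sigma$-invariants $v_\pm = x^n z_\pm - P_\pm(y)$, with $P_\pm(y) = \int_0^y p_\pm(\tau)d\tau$, form the backbone: over $\mathrm{Spec}(A[x^{-1}])$ the slice $x^{-n}y$ together with $v_\pm$ trivializes the quotient, so the analysis concentrates on the closed fiber of $\mathrm{p}:\mathfrak{X}\to S$.

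First, I would construct two principal $\sigma$-invariant opens $\Omega_\pm \subset \mathbb{A}^{3}_S$ with affine geometric quotient. Choose polynomials $g_\pm(t)\in A[t]$ whose reductions $\bar g_\pm \in \kappa[t]$ vanish at the finite set of values $-\bar P_\pm(\alpha)$ as $\alpha$ ranges over the roots of $\bar p_\pm$ in $\bar\kappa$, so that $g_\pm(v_\pm)$ vanishes on the fixed-point locus $F_\pm = V(x,\bar p_\pm(y))$ of $\bar\sigma_\pm$ in $\mathbb{A}^{2}_S$. Let $V_\pm = \{g_\pm(v_\pm)\ne 0\}\subset \mathbb{A}^{2}_S$ and $\Omega_\pm = \pi_\mp^{-1}(V_\pm)\subset\mathbb{A}^{3}_S$. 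On $V_\pm$ the action $\bar\sigma_\pm$ is fixed-point free, so the stack quotient $[V_\pm/\bar\sigma_\pm]$ is represented by a locally separated algebraic space, though generally not a scheme: the invariant map to $\mathrm{Spec}(A[v_\pm][g_\pm(v_\pm)^{-1}])$ fails to be injective over the closed fiber whenever $\bar P_\pm$ takes the same value at distinct orbits. Nevertheless, the $\sigma$-invariant open $\Omega_\pm/\sigma$ is itself an affine scheme: this is the role of the Seshadri cover trick of \cite{Sesh72}, which, via a finite flat $\sigma$-equivariant cover (for instance a base-change $A\hookrightarrow A'$ splitting $\bar p_\pm$ in the residue field, combined with a suitable ramified extension), reduces matters to a situation where the induced action admits an explicit global slice and the quotient is visibly affine; descent along the finite cover then yields affineness of $\Omega_\pm/\sigma$.

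Second, I would verify that $\Omega_+/\sigma$, $\Omega_-/\sigma$ and the already-affine open $\mathfrak{X}\times_S\mathrm{Spec}(A[x^{-1}])$ cover $\mathfrak{X}$. This reduces to showing that $V(g_+(v_+))\cap V(g_-(v_-))\cap\{x=0\}$ is empty, i.e.\ no $y\in\bar\kappa$ simultaneously satisfies $\bar P_+(y)\in\bar P_+(\{\alpha:\bar p_+(\alpha)=0\})$ and $\bar P_-(y)\in\bar P_-(\{\beta:\bar p_-(\beta)=0\})$. The coprimality of $\bar p_+$ and $\bar p_-$, which follows from fixed-point freeness of $\sigma$, allows this to be arranged by a generic-position argument, if necessary enlarging the $g_\pm$ by extra factors of the form $t-c$ with generic $c\in A$. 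A further application of the Seshadri descent then upgrades schemeness of $\mathfrak{X}$ to affineness, as required.

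The central obstacle is affineness of the local quotients $\Omega_\pm/\sigma$ despite the base stack quotients $[V_\pm/\bar\sigma_\pm]$ being only locally separated algebraic spaces. The Seshadri cover trick sidesteps this by producing explicit affine covers that descend, and this is the crucial innovation allowing the argument to avoid any appeal to finite generation of the full invariant ring, in contrast with \cite{DubFin11}.
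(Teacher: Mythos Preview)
Your outline is close in spirit to the paper's: one constructs two $\sigma$-invariant principal opens $W_\pm$ cut out by $\Gamma_\pm=\alpha_\pm(\Phi_\pm)$ (your $g_\pm(v_\pm)$, with $\alpha_\pm$ the polynomial of critical values of $\overline P_\pm$), checks that together with $\{x\neq 0\}$ they cover $\mathfrak X$, and proves each $W_\pm/\mathbb{G}_{a,S}$ affine after a finite \'etale base change. But both of your key steps have genuine gaps.

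For the covering: your proposed fix of ``enlarging the $g_\pm$ by extra factors'' goes the wrong way --- it shrinks the opens $\Omega_\pm$, making the covering condition harder, not easier. Coprimality of $\overline p_+,\overline p_-$ alone does not prevent some $y_0$ from lying simultaneously in $\overline P_+^{-1}(\mathrm{critval}(\overline P_+))$ and $\overline P_-^{-1}(\mathrm{critval}(\overline P_-))$. The paper handles this by a coordinate change: after a generic substitution $z_-\mapsto az_-+bz_+$ one can put $\partial$ in \emph{general position} (Definition~\ref{def:general_position}, Lemma~\ref{lem:Bad-Plane-Removal}), meaning the two inverse-branch-locus sets become disjoint. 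The proof is a geometric argument about tangent lines to the image curve $(\overline P_+,\overline P_-)(\mathbb A^1_\kappa)\subset\mathbb P^2_\kappa$, and it also uses the reduction to $A$ containing a field of representatives for $\kappa$.

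For the affineness of $W_\pm/\mathbb{G}_{a,S}$: this is where properness is actually consumed, and your sketch misses it. After the \'etale base change (the paper takes the Galois closure of $\overline P_\pm:\mathbb A^1_\kappa\to\mathbb A^1_\kappa$ over the unramified locus $U_\pm$, then tensors up to $\tilde U_\pm$), the action does \emph{not} acquire a global slice. What happens is that $P_\pm(y)-t$ factors modulo $x^n$, so $\tilde W_\pm$ breaks into $r=\deg\overline P_\pm$ invariant opens $\tilde W_{\overline g}$, each equivariantly isomorphic to $\mathbb A^2_{\tilde U_\pm}$ with its own slice; the quotient $\tilde{\mathfrak X}_\pm$ is then $r$ copies of $\mathbb A^1_{\tilde U_\pm}$ glued over $\{x\neq 0\}$ by translations $v_{\overline g}\mapsto v_{\overline g'}+f_{\overline g,\overline g'}$ with $f_{\overline g,\overline g'}\in\tilde R_x$. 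At this point separatedness of $\tilde{\mathfrak X}_\pm$ (inherited from properness of $\sigma$) is invoked to force each $f_{\overline g,\overline g'}$ to be of the form $x^{-m}F$ with $F$ a unit modulo $x$, and affineness is proved by an explicit induction on $r$, building global functions on $\tilde{\mathfrak X}_\pm$ that generate the unit ideal and whose principal opens lie in unions of fewer charts. Your appeal to an ``explicit global slice'' after a cover collapses precisely this step, which is the substance of the proof; without it there is no mechanism by which properness enters and no reason for $\Omega_\pm/\sigma$ to be affine rather than merely a separated algebraic space.
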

\begin{parn} The principle of the proof given below is the following:
we exploit the twin triangularity to construct two $\mathbb{G}_{a,S}$-invariant
principal open subsets $W_{\Gamma_{+}}$ and $W_{\Gamma_{-}}$ in
$\mathbb{A}_{S}^{3}$ with the property that the union of corresponding
principal open subspaces $\mathfrak{X}_{\Gamma_{\pm}}=W_{\Gamma_{\pm}}/\mathbb{G}_{a,S}$
of $\mathfrak{X}$ covers the closed fiber of the structure morphism
$\mathrm{p}:\mathfrak{X}\rightarrow S$. We then show that $\mathfrak{X}_{\Gamma_{+}}$
and $\mathfrak{X}_{\Gamma_{-}}$ are in fact affine sub-schemes of
$\mathfrak{X}$. On the other hand, since $\partial$ admits $x^{-n}y$
as a global slice over $A_{x}$, the generic fiber of $\mathrm{p}$
is isomorphic to the affine plane over the function field $A_{x}$
of $S$. So it follows that $\mathfrak{X}$ is covered by three principal
affine open sub-schemes $\mathfrak{X}_{\Gamma_{+}}$, $\mathfrak{X}_{\Gamma_{-}}$
and $\mathfrak{X}_{x}$ corresponding to regular functions $x$, $\Gamma_{+}$,
$\Gamma_{-}$ which generate the unit ideal in $\Gamma(\mathfrak{X},\mathcal{O}_{\mathfrak{X}})\simeq A[y,z_{+},z_{-}]^{\mathbb{G}_{a,S}}\subset A[y,z_{+},z_{-}]$,
whence is an affine scheme.

\end{parn}

\begin{parn} The fact that the affineness of $\mathrm{p}:\mathfrak{X}=\mathbb{A}_{S}^{3}/\mathbb{G}_{a,S}\rightarrow S=\mathrm{Spec}(A)$
is a local property with respect to the fpqc topology on $S$ \cite[VIII.5.6]{SGA1}
enables a reduction to the case where the discrete valuation ring
$A$ is Henselian or complete. Since it contains a field of characteristic
zero, an elementary application of Hensel's Lemma implies that a maximal
subfield of such a local ring $A$ is a field of representatives,
i.e. a subfield which is mapped isomorphically by the quotient projection
$A\mapsto A/\mathfrak{m}$ onto the residue field $\kappa=A/\mathfrak{m}$.
This is in fact the only property of $A$ that we will use in the
sequel. So from now on, $(A,\mathfrak{m},\kappa)$ is a discrete valuation
ring containing a field $\kappa$ of characteristic $0$ and with
residue field $A/\mathfrak{m}\simeq\kappa$.

\end{parn}

\subsection{Twin-triangular actions in general position and associated invariant
covering}

Here we construct a pair of principal $\mathbb{G}_{a,S}$-invariant
open subsets $W_{\pm}=W_{\Gamma_{\pm}}$ of $\mathbb{A}_{S}^{3}$
associated with a twin-triangular $A$-derivation of $A[y,z_{+},z_{-}]$
whose geometric quotients will be studied in the next sub-section.
We begin with a technical condition which will be used to guarantee
that the union of $W_{+}$ and $W_{-}$ covers the closed fiber of
the projection $\mathrm{pr}_{S}:\mathbb{A}_{S}^{3}\rightarrow S$. 
\begin{defn}
\label{def:general_position} Let $(A,\mathfrak{m},\kappa)$ be a
discrete valuation valuation ring containing a field of characteristic
$0$ and let $x\in\mathfrak{m}$ be a uniformizing parameter. A twin-triangular
$A$-derivation $\partial=x^{n}\partial_{y}+p_{+}(y)\partial_{z_{+}}+p_{-}(y)\partial_{z_{-}}$
of $A[y,z_{+},z_{-}]$ is said to be in \emph{general position} if
it satisfies the following properties: 

a) The residue classes $\overline{p}_{\pm}\in\kappa[y]$ of the polynomials
$p_{\pm}\in A[y]$ modulo $\mathfrak{m}$ are both non zero and relatively
prime,

b) There exist integrals $\overline{P}_{\pm}\in A[y]$ of $\overline{p}_{\pm}$
with respect to $y$ for which the inverse images of the branch loci
of the morphisms $\overline{P}_{+}:\mathbb{A}_{\kappa}^{1}\rightarrow\mathbb{A}_{\kappa}^{1}$
and $\overline{P}_{-}:\mathbb{A}_{\kappa}^{1}\rightarrow\mathbb{A}_{\kappa}^{1}$
are disjoint. \end{defn}
\begin{lem}
\label{lem:Bad-Plane-Removal} With the notation above, every twin-triangular
$A$-derivation $\partial$ of $A[y,z_{+},z_{-}]$ generating a fixed
point free $\mathbb{G}_{a,S}$-action on $\mathbb{A}_{S}^{3}$ is
conjugate to one in general position. \end{lem}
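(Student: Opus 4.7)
The plan is to achieve general position by conjugating $\partial$ with an $A[y]$-linear automorphism of $A[y][z_+, z_-]$ of the form
\[
(z_+, z_-) \mapsto (\alpha z_+ + \beta z_- + g(y),\ \gamma z_+ + \delta z_- + h(y))
\]
with $\alpha \delta - \beta \gamma \in A^{*}$ and $g, h \in A[y]$. Any such automorphism preserves the twin-triangular shape of $\partial$, since $\partial z'_\pm$ is an $A$-linear combination of $p_+, p_-$ plus an $x^n$-multiple of a derivative in $y$. Reduction modulo $\mathfrak{m}$ induces a $GL_2(\kappa)$-action on the pair $(\bar{p}_+, \bar{p}_-) \in \kappa[y]^2$, with the shifts $g, h$ becoming invisible; both halves of the general-position condition are conditions on this pair alone, so the problem reduces to finding a $GL_2(\kappa)$-translate satisfying (a) and (b).

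Condition (a) is easy: fixed-point-freeness together with $n \geq 1$ forces the ideal $(\bar{p}_+, \bar{p}_-)$ to be all of $\kappa[y]$, so coprimality is automatic, and if one of the $\bar{p}_\pm$ vanishes the other must be a nonzero constant, which the shear $(z_+, z_-) \mapsto (z_+ + z_-, z_-)$ remedies. Whenever one $\bar{p}_\pm$ is a nonzero constant its antiderivative is linear, the branch locus is empty and (b) holds vacuously; we may therefore suppose both $\bar{p}_\pm$ are non-constant coprime polynomials in $\kappa[y]$.

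The substantive step handles (b) in this remaining case by applying the one-parameter family of shears $(z_+, z_-) \mapsto (z_+, \lambda z_+ + z_-)$, $\lambda \in \kappa$, which sends $(\bar{p}_+, \bar{p}_-)$ to $(\bar{p}_+, \bar{p}_- + \lambda \bar{p}_+)$, and thus $(\bar{P}_+, \bar{P}_-)$ to $(\bar{P}_+, \bar{P}_- + \lambda \bar{P}_+)$. The inverse image $S_+ \subset \mathbb{A}^{1}_{\bar{\kappa}}$ of the branch locus of $\bar{P}_+$ is a finite set independent of $\lambda$, while for each $y_0 \in S_+$ the set of $\lambda$ for which $y_0$ also lies in the corresponding inverse image on the $-$ side is the projection to the $\lambda$-line of the affine scheme $C_{y_0} \subset \mathbb{A}^{2}_{\bar{\kappa}}$ cut out by
\[
G(b, \lambda) = \bar{p}_-(b) + \lambda \bar{p}_+(b), \qquad F_{y_0}(b, \lambda) = (\bar{P}_-(b) - \bar{P}_-(y_0)) + \lambda(\bar{P}_+(b) - \bar{P}_+(y_0)).
\]

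The hard part is checking that $C_{y_0}$ is zero-dimensional. The polynomial $G$ is irreducible in $\bar{\kappa}[b, \lambda]$ because it is linear in $\lambda$ with coprime coefficients $\bar{p}_\pm$; a hypothetical divisibility $F_{y_0} = H(b) G$ would force $\bar{P}_\pm(b) - \bar{P}_\pm(y_0) = H(b)\bar{p}_\pm(b)$, and differentiating these two identities in $b$, using $\bar{P}_\pm' = \bar{p}_\pm$, and eliminating $H, H'$ yields $\bar{p}_-' \bar{p}_+ = \bar{p}_+' \bar{p}_-$, forcing $\bar{p}_-/\bar{p}_+$ to be a constant in $\bar{\kappa}(b)$ and contradicting the coprimality of the non-constant pair. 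Thus $F_{y_0}$ and $G$ are coprime, $C_{y_0}$ is zero-dimensional, and its image in the $\lambda$-line is finite. Taking the union over the finite set $S_+$ only finitely many $\lambda \in \kappa$ are bad, and since $\kappa$ is infinite in characteristic zero a good $\lambda$ exists, producing the desired conjugate in general position (coprimality and non-vanishing of the $\bar{p}_\pm$ being easily seen to persist under this shear).
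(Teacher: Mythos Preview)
Your proof is correct. Both you and the paper fix $\bar P_+$ and vary the $-$ coordinate by a linear change, showing that for a generic choice the two ramification pullbacks become disjoint; the treatments of condition~(a) are identical. Where you diverge is in establishing the genericity for~(b). The paper argues geometrically: it forms the closure $C\subset\mathbb{P}^2_\kappa$ of the image of $j:y\mapsto(\bar P_+(y),\bar P_-(y))$, notes that the finite set $Z=j(S_+)$ lies on $C$, and observes that only finitely many lines through a given point of $Z$ are tangent to a local analytic branch of $C$; avoiding the corresponding points on $L_\infty$ gives a generic center of projection, hence a generic combination $az_-+bz_+$. Your argument is purely algebraic: for each $y_0\in S_+$ you write down explicit equations $G,F_{y_0}$ in $(b,\lambda)$ and verify their coprimality via a Wronskian-type identity. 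The two computations are really the same---$G(b,\lambda)=0$ says the tangent to $C$ at $j(b)$ points toward $[1:-\lambda:0]\in L_\infty$, and $F_{y_0}(b,\lambda)=0$ says $j(b)$ and $j(y_0)$ lie on a common line through that point---so you are reproving the finiteness of tangent lines through a point by hand. What your route buys is self-containedness: no appeal to local analytic branches or projective geometry, just coprimality of two explicit polynomials in $\bar\kappa[b,\lambda]$.
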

\begin{proof}
A twin-triangular derivation $\partial=x^{n}\partial_{y}+p_{+}(y)\partial_{z_{+}}+p_{-}(y)\partial_{z_{-}}$
generates a fixed point free $\mathbb{G}_{a,S}$-action if and only
if $x^{n}$, $p_{+}(y)$ and $p_{-}(y)$ generate the unit ideal in
$A[y,z_{+},z_{-}]$. So the residue classes $\overline{p}_{+}$ and
$\overline{p}_{-}$ of $p_{+}$ and $p_{-}$ are relatively prime
and at least one of them, say $\overline{p}_{-}$, is nonzero. If
$\overline{p}_{+}=0$ then $p_{-}$ is necessarily of the form $p_{-}(y)=c+x\tilde{p}_{-}(y)$
for some $c\in A^{*}$ and so changing $z_{+}$ for $z_{+}+z_{-}$
yields a twin-triangular derivation conjugate to $\partial$ for which
the corresponding polynomials $p_{\pm}(y)$ both have non zero residue
classes modulo $x$. More generally, changing $z_{-}$ for $az_{-}+bz_{+}$
for general $a\in A^{*}$ and $b\in A$ yields a twin-triangular derivation
conjugate to $\partial$ and still satisfying condition a) in Definition
\ref{def:general_position}. So it remains to show that up to such
a coordinate change, condition b) in the Definition can be achieved. 

This can be seen as follows : we consider $\mathbb{A}_{\kappa}^{2}$
embedded in $\mathbb{P}_{\kappa}^{2}={\rm Proj}(\kappa[u,v,w])$ as
the complement of the line $L_{\infty}=\left\{ w=0\right\} $ so that
the coordinate system $\left(u,v\right)$ on $\mathbb{A}^{2}$ is
induced by the projections from the $\kappa$-rational points $\left[0:1:0\right]$
and $\left[1:0:0\right]$ respectively. We let $C$ be the closure
in $\mathbb{P}^{2}$ of the image of the morphism $j=(\overline{P}_{+},\overline{P}_{-}):\mathbb{A}_{\kappa}^{1}={\rm Spec}(\kappa[y])\rightarrow\mathbb{A}_{\kappa}^{2}$
defined by the residue classes $\overline{P}_{+}$ and $\overline{P}_{-}$
in $\kappa[y]$ of integrals $P_{\pm}(y)\in A[y]$ of $p_{\pm}(y)$,
and we denote by $Z\subset C$ the image by $j$ of the inverse image
of the branch locus of $\overline{P}_{+}:\mathbb{A}_{\kappa}^{1}\rightarrow\mathbb{A}_{\kappa}^{1}$.
Note that $Z$ is a finite subset of $C$ defined over $\kappa$.
Since the condition that a line through a fixed point in $\mathbb{P}_{\kappa}^{2}$
intersects transversally a fixed curve is Zariski open, the set of
lines in $\mathbb{P}_{\kappa}^{2}$ passing through a point of $Z$
and tangent to a local analytic branch of $C$ at some point is finite.
Therefore, the complement of the finitely many intersection points
of these lines with $L_{\infty}$ is a Zariski open subset $U$ of
$L_{\infty}$ with the property that for every $q\in U$, the line
through $q$ and every arbitrary point of $Z$ intersects every local
analytic branch of $C$ transversally at every point. By construction,
the rational projections from $\left[0:1:0\right]$ and an arbitrary
$\kappa$-rational point in $U\setminus\{\left[0:1:0\right]\}$ induce
a new coordinate system on $\mathbb{A}_{\kappa}^{2}$ of the form
$\left(u,av+bu\right)$, $a\neq0$, with the property that $Z$ is
not contained in the inverse image of the branch locus of the morphism
$a\overline{P}_{-}+b\overline{P}_{+}:\mathbb{A}_{\kappa}^{1}\rightarrow\mathbb{A}_{\kappa}^{1}$.
Changing $z_{-}$ for $az_{-}+bz_{+}$ for a pair $(a,b)\in\kappa^{*}\times\kappa\subset A^{*}\times A$
corresponding to a general point in $U$ yields a twin-triangular
derivation conjugate to $\partial$ and satisfying simultaneously
conditions a) and b) in Definition \ref{def:general_position}. 
\end{proof}
\begin{parn} \label{par:finite_etale_restriction} Now let $\partial=x^{n}\partial_{y}+p_{+}(y)\partial_{z_{+}}+p_{-}(y)\partial_{z_{-}}$
be a twin-triangular $A$-derivation of $A[y,z_{+},z_{-}]$ generating
a proper whence fixed point free $\mathbb{G}_{a,S}$-action $\sigma:\mathbb{G}_{a,S}\times_{S}\mathbb{A}_{S}^{3}\rightarrow\mathbb{A}_{S}^{3}$.
By virtue of Lemma \ref{lem:Bad-Plane-Removal} above, we may assume
up to a coordinate change preserving twin-triangularity that $\partial$
is in general position. Property a) in Definition \ref{def:general_position}
then guarantees in particular that the triangular derivations $\partial_{\pm}=x^{n}\partial_{y}+p_{\pm}(y)\partial_{z_{\pm}}$
of $A[y,z_{\pm}]$ are both irreducible. Furthermore, given any integral
$P_{\pm}(y)\in A[y]$ of $p_{\pm}(y)$, the morphism $\overline{P}_{\pm}:\mathbb{A}_{\kappa}^{1}\rightarrow\mathbb{A}_{\kappa}^{1}$
obtained by restricting $P_{\pm}:\mathbb{A}_{S}^{1}={\rm Spec}(A[y])\rightarrow\mathbb{A}_{S}^{1}={\rm Spec}(A[t])$
to the closed fiber of $\mathrm{pr}_{S}:\mathbb{A}_{S}^{3}\rightarrow S$
is not constant. The branch locus of $\overline{P}_{\pm}$ is then
a principal divisor $\mathrm{div}(\alpha_{\pm}(t))$ for a certain
polynomial $\alpha_{\pm}(t)\in\kappa[t]\subset A[t]$ generating the
kernel of the homomorphism $\kappa[t]\rightarrow\kappa[y]/(\overline{p}_{\pm}(y))$,
$t\mapsto\overline{P}_{\pm}(y)+(\overline{p}_{\pm}(y))$. Property
b) in Definition \ref{def:general_position} guarantees that we can
choose $P_{+}$ and $P_{-}$ in such a way that the polynomial $\alpha_{+}(\overline{P}_{+}(y))$
and $\alpha_{-}(\overline{P}_{-}(y))$ generate the unit ideal in
$\kappa[y]$. Up to a diagonal change of coordinates on $A[y,z_{+},z_{-}]$,
we may further assume without loss of generality that $\overline{P}_{+}$
and $\overline{P}_{-}$ are monic. 

\end{parn}

\begin{parn} \label{par:open_cover_def} We let $R_{\pm}=A[t]_{\alpha_{\pm}}$
and we let $U_{\pm}=\mathrm{Spec}(R_{\pm})$ be the principal open
subset of $\mathbb{A}_{S}^{1}=\mathrm{Spec}(A[t])$ where $\alpha_{\pm}$
does not vanish. The polynomial $\Phi_{\pm}=-x^{n}z_{\pm}+P_{\pm}(y)\in A[y,z_{+},z_{-}]$
belongs to the kernel of $\partial$ hence defines a $\mathbb{G}_{a,S}$-invariant
morphism $\Phi_{\pm}:\mathbb{A}_{S}^{3}=\mathrm{Spec}(A[y,z_{+},z_{-}])\rightarrow\mathbb{A}_{S}^{1}=\mathrm{Spec}(A[t])$.
We let 
\begin{eqnarray*}
W_{\pm} & = & \Phi_{\pm}^{-1}(U_{\pm})\simeq\mathrm{Spec}(R_{\pm}[y,z_{+},z_{-}]/(-x^{n}z_{\pm}+P_{\pm}(y)-t))
\end{eqnarray*}
Note that $W_{\pm}$ is a $\mathbb{G}_{a,S}$-invariant open subset
of $\mathbb{A}_{S}^{3}$ which can be identified with the principal
open subset where the $\mathbb{G}_{a,S}$-invariant regular function
$\Gamma_{\pm}=\alpha_{\pm}\circ\Phi_{\pm}$ does not vanish. Since
$\alpha_{+}(\overline{P}_{+}(y))$ and $\alpha_{-}(\overline{P}_{-}(y))$
generate the unit ideal in $\kappa[y]$, it follows that the union
of $W_{+}$ and $W_{-}$ covers the closed fiber of the projection
$\mathrm{pr}_{S}:\mathbb{A}_{S}^{3}\rightarrow S$. 

\end{parn}

\subsection{Affineness of geometric quotients}

With the notation of \S \ref{par:open_cover_def} above, the geometric
quotient $\mathfrak{X}_{\pm}=W_{\pm}/\mathbb{G}_{a,S}$ for the action
induced by $\sigma:\mathbb{G}_{a,S}\times_{S}\mathbb{A}_{S}^{3}\rightarrow\mathbb{A}_{S}^{3}$
can be identified with the principal open sub-space $\mathfrak{X}_{\Gamma_{\pm}}$
of $\mathfrak{X}=\mathbb{A}_{S}^{3}/\mathbb{G}_{a,S}$ where the invariant
function $\Gamma_{\pm}\in A[y,z_{+},z_{-}]^{\mathbb{G}_{a,S}}\simeq\Gamma(\mathfrak{X},\mathcal{O}_{\mathfrak{X}})$
does not vanish. The properness of $\sigma$ implies that $\mathfrak{X}$,
whence $\mathfrak{X}_{+}$ and $\mathfrak{X}_{-}$, are separated
algebraic spaces, and the construction of $W_{+}$ and $W_{-}$ guarantees
that the closed fiber of the structure morphism $\mathrm{p}:\mathfrak{X}\rightarrow S$
is contained in the union of $\mathfrak{X}_{+}$ and $\mathfrak{X}_{-}$.
So to complete the proof of Proposition \ref{prop:Twin-Loc-trivi},
it remains to show that $\mathfrak{X}_{\pm}$ is an affine scheme.
In fact, since $\mathfrak{X}_{\pm}$ is by construction an algebraic
space over the affine scheme $U_{\pm}=\mathrm{Spec}(R_{\pm})$, its
affineness is equivalent to that of the structure morphism $q_{\pm}:\mathfrak{X}_{\pm}\rightarrow U_{\pm}$,
a property which can be checked locally with respect to the \'etale
topology on $U_{\pm}$. 

\begin{parn} In our situation, there is a natural finite \'etale
base change $\varphi_{\pm}:\tilde{U}_{\pm}\rightarrow U_{\pm}$ which
is obtained as follows: By construction, see \S \ref{par:finite_etale_restriction}
above, the morphism $\overline{P}_{\pm}:\mathbb{A}_{\kappa}^{1}=\mathrm{Spec}(\kappa[y])\rightarrow\mathrm{Spec}(\kappa[t])$,
restricts to a finite \'etale covering $h_{0,\pm}:C_{1,\pm}={\rm Spec}(\kappa[y]_{\alpha_{\pm}(\overline{P}_{\pm}(y))})\rightarrow C_{\pm}={\rm Spec}(\kappa[t]_{\alpha_{\pm}(t)})$
of degree $r_{\pm}=\deg_{y}(\overline{P}_{\pm}(y))$. Letting $\tilde{C}_{\pm}=\mathrm{Spec}(B_{\pm})$
be the normalization of $C_{\pm}$ in the Galois closure $L_{\pm}$
of the field extension $i_{\pm}:\kappa(t)\hookrightarrow\kappa(y)$,
the induced morphism $h_{\pm}:\tilde{C}_{\pm}\rightarrow C_{\pm}$
is an \'etale Galois cover with Galois group $G_{\pm}=\mathrm{Gal}(L_{\pm}/\kappa(t))$,
which factors as 
\[
h_{\pm}:\tilde{C}_{\pm}=\mathrm{Spec}(B_{\pm})\stackrel{h_{1,\pm}}{\longrightarrow}C_{1,\pm}={\rm Spec}(\kappa[y]_{\alpha_{\pm}(\overline{P}_{\pm}(y))})\stackrel{h_{0,\pm}}{\longrightarrow}C_{\pm}={\rm Spec}(\kappa[t]_{\alpha_{\pm}(t)})
\]
where $h_{1,\pm}:\tilde{C}_{\pm}\rightarrow C_{1,\pm}$ is an \'etale
Galois cover for a certain subgroup $H_{\pm}$ of $G_{\pm}$ of index
$r_{\pm}$. Letting $\tilde{R}_{\pm}=A\otimes_{\kappa}B_{\pm}\simeq A[t]_{\alpha_{\pm}(t)}\otimes_{\kappa[t]_{\alpha_{\pm}(t)}}B_{\pm}$
and $\tilde{U}_{\pm}=\mathrm{Spec}(\tilde{R}_{\pm})$, the morphism
$\varphi_{\pm}=\mathrm{pr}_{1}:\tilde{U}_{\pm}\simeq U_{\pm}\times_{C_{\pm}}\tilde{C}_{\pm}\rightarrow U_{\pm}$
is an \'etale Galois cover with Galois group $G_{\pm}$, in particular
a finite morphism. Since $\mathfrak{X}_{\pm}$ is separated, the algebraic
space $\tilde{\mathfrak{X}}_{\pm}=\mathfrak{X}_{\pm}\times_{U_{\pm}}\tilde{U}_{\pm}$
is separated and, by construction, isomorphic to the geometric quotient
of the scheme
\begin{eqnarray*}
\tilde{W}_{\pm}=W_{\pm}\times_{U_{\pm}}\tilde{U}_{\pm} & \simeq & \mathrm{Spec}(\tilde{R}_{\pm}[y,z_{+},z_{-}]/(-x^{n}z_{\pm}+P_{\pm}(y)-t))
\end{eqnarray*}
by the proper $\mathbb{G}_{a,\tilde{U}_{\pm}}$-action generated by
the locally nilpotent $\tilde{R}_{\pm}$-derivation $x^{n}\partial_{y}+p_{+}(y)\partial_{z_{+}}+p_{-}(y)\partial_{z_{-}}$
of $\tilde{R}_{\pm}[y,z_{+},z_{-}]//(-x^{n}z_{\pm}+P_{\pm}(y)-t)$,
which commutes with the action of $G_{\pm}$. The following Lemma
completes the proof of Proposition \ref{prop:Twin-Loc-trivi} whence
of the Main Theorem. 

\end{parn}
\begin{lem}
The geometric quotient $\tilde{\mathfrak{X}}_{\pm}=\tilde{W}_{\pm}/\mathbb{G}_{a,\tilde{U}_{\pm}}$
is an affine $\tilde{U}_{\pm}$-scheme. \end{lem}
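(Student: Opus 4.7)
The plan is to exhibit a global slice on $\tilde{W}_{\pm}$ for the $\mathbb{G}_{a,\tilde{U}_{\pm}}$-action, which will identify $\tilde{\mathfrak{X}}_{\pm}$ with a closed subscheme of the affine scheme $\tilde{W}_{\pm}$ and hence prove affineness. The key enabling fact is the complete splitting of $\overline{P}_{\pm}(y)-t$ that the Galois cover $\varphi_{\pm}$ was built to produce.

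First I would exploit this splitting. Since $\tilde{C}_{\pm}$ is the normalization of $C_{\pm}$ in the Galois closure of $\kappa(t)\hookrightarrow\kappa(y)$, one has a complete factorization $\overline{P}_{\pm}(y)-t=\prod_{i=1}^{r_{\pm}}(y-\overline{\lambda}_{i})$ in $B_{\pm}[y]$, with $\overline{\lambda}_{i}-\overline{\lambda}_{j}\in B_{\pm}^{\times}$ for $i\neq j$ by the étaleness of $h_{\pm}$, and $\overline{p}_{\pm}(\overline{\lambda}_{i})\in B_{\pm}^{\times}$ by the construction of $\alpha_{\pm}$ (which was chosen so that inverting it inverts $\overline{p}_{\pm}(y)$ on $C_{1,\pm}$). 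Because $A$ contains $\kappa$ as a field of representatives, $B_{\pm}\hookrightarrow\tilde{R}_{\pm}=A\otimes_{\kappa}B_{\pm}$, so each $\overline{\lambda}_{i}$ lives in $\tilde{R}_{\pm}$, and since $P_{\pm}-\overline{P}_{\pm}\in xA[y]$ one obtains the congruence $P_{\pm}(y)-t\equiv\prod_{i}(y-\overline{\lambda}_{i})\pmod{x}$ in $\tilde{R}_{\pm}[y]$. In particular the closed fiber of $\tilde{W}_{\pm}\to\tilde{U}_{\pm}$ decomposes as $\bigsqcup_{i=1}^{r_{\pm}}\mathbb{A}_{B_{\pm}}^{2}$, and the reduction of $\partial$ on the $i$-th component is $\overline{p}_{\pm}(\overline{\lambda}_{i})\partial_{z_{\pm}}+\overline{p}_{\mp}(\overline{\lambda}_{i})\partial_{z_{\mp}}$, admitting the exact slice $z_{\pm}/\overline{p}_{\pm}(\overline{\lambda}_{i})$.

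Next I would produce an approximate global slice by Lagrange interpolation. Using the idempotents $e_{i}(y)=\prod_{j\neq i}(y-\overline{\lambda}_{j})/\prod_{j\neq i}(\overline{\lambda}_{i}-\overline{\lambda}_{j})\in B_{\pm}[y]$ I form
\[
s_{0}\;=\;z_{\pm}\sum_{i=1}^{r_{\pm}}\frac{e_{i}(y)}{\overline{p}_{\pm}(\overline{\lambda}_{i})}\;\in\;\tilde{R}_{\pm}[y,z_{\pm}],
\]
which, viewed as a regular function on $\tilde{W}_{\pm}$, restricts to the correct slice on each closed fiber component and therefore satisfies $\partial s_{0}\equiv1\pmod{x}$. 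Because the closed-fiber reduction of $\partial$ on each $\mathbb{A}_{B_{\pm}}^{2}$ is surjective (having a slice), one can then iteratively correct: given $\partial s_{k}=1+x^{k+1}g_{k}$ in $\Gamma(\tilde{W}_{\pm},\mathcal{O})$, solve $\partial h_{k}\equiv g_{k}\pmod{x}$ on the closed fiber, lift $h_{k}$ arbitrarily to $\tilde{R}_{\pm}[y,z_{+},z_{-}]$, and set $s_{k+1}=s_{k}-x^{k+1}h_{k}$.

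The main obstacle is promoting this formal iteration to an honest algebraic slice, since the $s_{k}$ a priori only converge in the $x$-adic completion of the coordinate ring of $\tilde{W}_{\pm}$. I expect this to be handled in the spirit of the Seshadri cover trick alluded to in the introduction: the formal slice trivializes the $\mathbb{G}_{a}$-bundle $\tilde{W}_{\pm}\to\tilde{\mathfrak{X}}_{\pm}$ on $\mathrm{Spec}(\hat{\tilde{R}}_{\pm})$, yielding the affineness of $\tilde{\mathfrak{X}}_{\pm}\times_{\tilde{U}_{\pm}}\mathrm{Spec}(\hat{\tilde{R}}_{\pm})$; combined with the already established affineness of $\tilde{\mathfrak{X}}_{\pm}$ over $\tilde{U}_{\pm}\cap\{x\neq0\}$ (a slice being given there by $x^{-n}y$), faithfully flat descent along the cover $\mathrm{Spec}(\hat{\tilde{R}}_{\pm})\sqcup\tilde{U}_{\pm,x}\to\tilde{U}_{\pm}$ promotes this to affineness of $\tilde{\mathfrak{X}}_{\pm}$ itself. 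Equivalently, one can verify by direct inspection that $\Gamma(\tilde{W}_{\pm},\mathcal{O})^{\mathbb{G}_{a}}$ is generated over $\tilde{R}_{\pm}$ by the tautological invariant $t'=P_{\mp}(y)-x^{n}z_{\mp}$ together with the polynomial-counterpart of the Lagrange expression above, giving an explicit affine presentation of $\tilde{\mathfrak{X}}_{\pm}$.
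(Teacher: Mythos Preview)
Your argument has a genuine gap: it never invokes the properness hypothesis, yet that hypothesis is indispensable. There exist fixed point free twin-triangular $\mathbb{G}_{a,S}$-actions in general position that are \emph{not} proper; for those the quotients $\tilde{\mathfrak{X}}_{\pm}$ are non-separated and in particular not affine. Any correct proof must therefore use separatedness of $\tilde{\mathfrak{X}}_{\pm}$ (equivalently, properness of $\sigma$) at some point, and yours does not.

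Concretely, the iterative correction $s_{k+1}=s_{k}-x^{k+1}h_{k}$ raises the $z_{\pm}$-degree at each step, so the limit lives only in a power series completion along the closed fiber of $\tilde{W}_{\pm}$, not in $\hat{\tilde{R}}_{\pm}[y,z_{+},z_{-}]/(\ldots)=\Gamma\bigl(\tilde{W}_{\pm}\times_{\tilde{U}_{\pm}}\mathrm{Spec}(\hat{\tilde{R}}_{\pm}),\mathcal{O}\bigr)$. Hence you do not obtain a slice after base change to $\mathrm{Spec}(\hat{\tilde{R}}_{\pm})$, and the fpqc descent step is unjustified. (In fact, passing to the $x$-adic completion of the base changes nothing essential: the same gluing obstruction persists.) Your ``alternative'' of exhibiting generators for $\Gamma(\tilde{W}_{\pm},\mathcal{O})^{\mathbb{G}_{a}}$ does not help either: knowing the invariant ring is finitely generated does not show that the canonical map from the geometric quotient $\tilde{\mathfrak{X}}_{\pm}$ to $\mathrm{Spec}$ of that ring is an isomorphism --- that is precisely the affineness statement you are trying to prove.

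The paper's proof proceeds quite differently. Rather than seeking a global slice, it lifts the factorization $\overline{P}_{\pm}(y)-t=\prod_{\overline{g}}(y-t_{\overline{g}})$ only modulo $x^{n}$, uses it to produce a Zariski open cover $\tilde{W}_{\pm}=\bigcup_{\overline{g}}\tilde{W}_{\overline{g}}$ by $\mathbb{G}_{a}$-invariant opens on each of which $u_{\overline{g}}=(y-\sigma_{\overline{g}})/x^{n}$ is an honest slice, and thereby realizes $\tilde{\mathfrak{X}}_{\pm}$ as $r_{\pm}$ copies of $\mathbb{A}^{1}_{\tilde{U}_{\pm}}$ glued over $\{x\neq 0\}$ by translations $v_{\overline{g}}\mapsto v_{\overline{g}'}+f_{\overline{g},\overline{g}'}$ with $f_{\overline{g},\overline{g}'}\in\tilde{R}_{\pm,x}$. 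It is exactly at this point that properness enters: separatedness of $\tilde{\mathfrak{X}}_{\pm}$ forces each $f_{\overline{g},\overline{g}'}$ to have the form $x^{-m_{\overline{g},\overline{g}'}}F_{\overline{g},\overline{g}'}$ with $F_{\overline{g},\overline{g}'}$ a unit modulo $x$. With this constraint in hand, an induction on $r_{\pm}$ (constructing an explicit global regular function $\psi$ and showing that $x$, $\psi$, $\psi-\theta_{\overline{g}'}$ generate the unit ideal) finishes the proof of affineness.
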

\begin{proof}
Since $\tilde{U}_{\pm}$ is affine, the assertion is equivalent to
the affineness of $\tilde{\mathfrak{X}}_{\pm}$. From now on, we only
consider the case of $\tilde{\mathfrak{X}}_{+}=\tilde{W}_{+}/\mathbb{G}_{a,\tilde{U}_{+}}$,
the case of $\tilde{\mathfrak{X}}_{-}$ being similar. To simplify
the notation, we drop the corresponding subscript ``$+$'', writing
simply $\tilde{W}=\mathrm{Spec}(\tilde{R}[y,z,z_{-}]/(-x^{n}z+P(y)-t))$.
We denote $x\otimes1\in\tilde{R}=A\otimes_{\kappa}B$ by $x$ and
we further identify $B$ with a sub-$\kappa$-algebra of $\tilde{R}$
via the homomorphism $1\otimes\mathrm{id}_{B}:B\rightarrow\tilde{R}$
and with the quotient $\tilde{R}/x\tilde{R}$ via the composition
$1\otimes\mathrm{id}_{B}:B\rightarrow A\otimes_{\kappa}B\rightarrow A\otimes_{\kappa}B/((x\otimes1)A\otimes_{\kappa}B)=\kappa\otimes_{\kappa}B\simeq B$. 

By construction of $B$, the monic polynomial $\overline{P}(y)-t\in B\left[y\right]$
splits as $\overline{P}(y)-t=\prod_{\overline{g}\in G/H}(y-t_{\overline{g}})$
for certain elements $t_{\overline{g}}\in B$, $\overline{g}\in G/H$,
on which the Galois group $G$ acts by permutation $g'\cdot t_{\overline{g}}=t_{\overline{(g')^{-1}\cdot g}}$.
Furthermore, since $h_{0}:C_{1}\rightarrow C$ is \'etale, it follows
that for distinct $\overline{g},\overline{g}'\in G/H$, $t_{\overline{g}}-t_{\overline{g'}}\in B$
is an invertible regular function on $\tilde{C}$ whence on $\tilde{U}=S\times_{\mathrm{Spec}(\kappa)}\tilde{C}$
via the identifications made above. This implies in turn that there
exists a collection of elements $\sigma_{\overline{g}}\in\tilde{R}$
with respective residue classes $t_{\overline{g}}\in B=\tilde{R}/x\tilde{R}$
modulo $x$, $\overline{g}\in G/H$, on which $G$ acts by permutation,
a $G$-invariant polynomial $S_{1}\in\tilde{R}\left[y\right]$ with
invertible residue class modulo $x$ and a $G$-invariant polynomial
$S_{2}\in\tilde{R}\left[y\right]$ such that in $\tilde{R}\left[y\right]$
one can write 
\[
P(y)-t=S_{1}(y)\prod_{\overline{g}\in G/H}(y-\sigma_{\overline{g}})+x^{n}S_{2}(y).
\]
Concretely, the elements $\sigma_{\overline{g}}=\sigma_{\overline{g},n-1}\in\tilde{R}$,
$\overline{g}\in G/H$, can be constructed by induction via a sequence
of elements $\sigma_{\overline{g},m}\in\tilde{R}$, $\overline{g}\in G/H$,
$m=0,\ldots,n-1$, starting with $\sigma_{\overline{g},0}=t_{\overline{g}}\in B\subset\tilde{R}$
and culminating in $\sigma_{\overline{g},n-1}=\sigma_{\overline{g}}$,
and characterized by the property that for every $m=0,\ldots,n-1$,
there exists $\mu_{\overline{g},m}\in\tilde{R}$ such that $P(\sigma_{\overline{g},m})-t=x^{m+1}\mu_{\overline{g},m}$,
$\overline{g}\in G/H$. Indeed, writing $P(y)-t=\prod_{\overline{g}\in G/H}(y-t_{\overline{g}})+x\tilde{P}(y)$
for a certain $\tilde{P}(y)\in\tilde{R}[y]$ and assuming that the
$\sigma_{\overline{g},m}$, $\overline{g}\in G/H$, have been constructed
up to a certain index $m<n-1$, we look for elements $\sigma_{\overline{g},m+1}\in\tilde{R}$
written in the form $\sigma_{\overline{g},m}+x^{m+1}\lambda_{\overline{g}}$
for some $\lambda_{\overline{g}}\in\tilde{R}$. For a fixed $\overline{g}_{0}\in G/H$,
the conditions impose that 

\begin{eqnarray*}
P(\sigma_{\overline{g}_{0},m+1})-t & = & \prod_{\overline{g}\in G/H}(\sigma_{\overline{g}_{0},m}+x^{m+1}\lambda_{\overline{g}_{0}}-t_{\overline{g}})+x\tilde{P}(\sigma_{\overline{g}_{0},m}+x^{m+1}\lambda_{\overline{g}_{0}})\\
 & = & x^{m+1}\lambda_{\overline{g}_{0}}\prod_{\overline{g}\in(G/H)\setminus\{\overline{g}_{0}\}}(t_{\overline{g}_{0}}-t_{\overline{g}})+P(\sigma_{\overline{g}_{0},m})-t+x^{m+2}\nu_{\overline{g}_{0},m}\\
 & = & x^{m+1}\lambda_{\overline{g}_{0}}\prod_{\overline{g}\in(G/H)\setminus\{\overline{g}_{0}\}}(t_{\overline{g}_{0}}-t_{\overline{g}})+x^{m+1}\mu_{\overline{g}_{0,}m}+x^{m+2}\nu_{\overline{g}_{0},m}
\end{eqnarray*}
for some $\nu_{\overline{g}_{0},m}\in\tilde{R}$, and since $\prod_{\overline{g}\in(G/H)\setminus\{\overline{g}_{0}\}}(t_{\overline{g}_{0}}-t_{\overline{g}})\in\tilde{R}^{*}$,
we conclude that 
\[
\lambda_{\overline{g}_{0}}=\frac{\mu_{\overline{g}_{0},m}}{\prod_{\overline{g}\in(G/H)\setminus\{\overline{g}_{0}\}}(t_{\overline{g}_{0}}-t_{\overline{g}})}\quad\textrm{and}\quad\mu_{\overline{g}_{0},m+1}=\nu_{\overline{g}_{0},m}.
\]
A direct computation shows further that $g'\cdot\sigma_{\overline{g},m+1}=\sigma_{\overline{(g')^{-1}\cdot g},m+1}$
and that $g'\cdot\mu_{\overline{g},m+1}=\mu_{\overline{(g')^{-1}\cdot g},m+1}$.
Iterating this procedure $n-1$ times yields the desired collection
of elements $\sigma_{\overline{g}}=\sigma_{\overline{g},n-1}\in\tilde{R}$.
By construction, $\prod_{\overline{g}\in G/H}(y-\sigma_{\overline{g}})\in\tilde{R}[y]$
is then an invariant polynomial which divides $P(y)-t$ modulo $x^{n}\tilde{R}$,
which implies in turn the existence of the $G$-invariant polynomials
$S_{1}(y),S_{2}(y)\in\tilde{R}[y]$. 

The closed fiber of the induced morphism $\tilde{W}\rightarrow S$
consists of a disjoint union of closed sub-schemes $D_{\overline{g}}\simeq\mathrm{Spec}(\tilde{R}[z,z_{-}])\simeq\mathbb{A}_{\tilde{C}}^{2}$
with defining ideals $(x,y-\sigma_{\overline{g}})$, $\overline{g}\in G/H$.
The open sub-scheme $\tilde{W}_{\overline{g}}=\tilde{W}\setminus\bigcup_{\overline{g}'\in(G/H)\setminus\{\overline{g}\}}D_{\overline{g}'}$
of $\tilde{W}$ is $\mathbb{G}_{a,\tilde{U}}$-invariant and one checks
using the above expression for $P(y)-t$ that the rational map 
\[
\tilde{W}\dashrightarrow\mathrm{Spec}(\tilde{R}[u_{\overline{g}},z_{-}]),\quad(y,z,z_{-})\mapsto(u_{\overline{g}},z_{-})=(\frac{y-\sigma_{\overline{g}}}{x^{n}}=\frac{z-S_{2}(y)}{S_{1}(y)\prod_{\overline{g}'\in(G/H)\setminus\{\overline{g}\}}(y-\sigma_{\overline{g}'})},z_{-})
\]
induces a $\mathbb{G}_{a,\tilde{U}}$-equivariant isomorphism $\tau_{g}:\tilde{W}_{\overline{g}}\stackrel{\sim}{\rightarrow}\mathbb{A}_{\tilde{U}}^{2}=\mathrm{Spec}(\tilde{R}[u_{\overline{g}},z_{-}])$
for the $\mathbb{G}_{a,\tilde{U}}$-action on $\mathbb{A}_{\tilde{U}}^{2}$
generated by the locally nilpotent $\tilde{R}$-derivation $\partial_{u_{\overline{g}}}+p_{-}(x^{n}u_{\overline{g}}+\sigma_{\overline{g}})\partial_{z_{-}}$
of $\tilde{R}[u_{\overline{g}},z_{-}]$. The latter is a translation
with $u_{\overline{g}}$ as a global slice and with geometric quotient
$\tilde{W}_{\overline{g}}/\mathbb{G}_{a,\tilde{U}}$ isomorphic to
$\mathrm{Spec}(\tilde{R}[v_{\overline{g}}])$ where 
\[
v_{\overline{g}}=z_{-}-x^{-n}(P_{-}(x^{n}u_{\overline{g}}+\sigma_{\overline{g}})-P_{-}(\sigma_{\overline{g}}))\in\tilde{R}[u_{\overline{g}},z_{-}]^{\mathbb{G}_{a,\tilde{U}}}.
\]
By construction, for distinct $\overline{g},\overline{g}'\in G/H$,
the rational functions $\tau_{\overline{g}}^{*}v_{\overline{g}}$
and $\tau_{\overline{g}'}^{*}v_{\overline{g}'}$ on $\tilde{W}$ differ
by the addition of the element 
\[
f_{\overline{g},\overline{g}'}=x^{-n}(P_{-}(\sigma_{\overline{g}})-P_{-}(\sigma_{\overline{g}'}))\in\tilde{R}_{x}\in\Gamma(\tilde{W}_{\overline{g}}\cap\tilde{W}_{\overline{g}'},\mathcal{O}_{\tilde{W}}).
\]
This implies that $\tilde{\mathfrak{X}}=\tilde{W}/\mathbb{G}_{a,\tilde{U}}$
is isomorphic to the $\tilde{U}$-scheme obtained by gluing $r$ copies
$\tilde{\mathfrak{X}}_{g}=\mathrm{Spec}(\tilde{R}[v_{\overline{g}}])$
of $\mathbb{A}_{\tilde{U}}^{1}$ along the principal open subsets
$\tilde{\mathfrak{X}}_{\overline{g},x}\simeq\mathrm{Spec}(\tilde{R}_{x}[v_{\overline{g}}])$
via the isomorphisms induced by the $\tilde{R}_{x}$-algebra isomorphisms
\[
\xi_{\overline{g},\overline{g}'}^{*}:\tilde{R}_{x}[v_{\overline{g}}]\rightarrow\tilde{R}_{x}[v_{\overline{g}'}],v_{\overline{g}}\mapsto v_{\overline{g}'}+f_{\overline{g},\overline{g}'},\quad\overline{g},\overline{g}'\in G/H,\;\overline{g}\neq\overline{g}'.
\]
Since by assumption $\tilde{\mathfrak{X}}$ is separated, it follows
from \cite[I.5.5.6]{EGA1} that for every pair of distinct elements
$\overline{g},\overline{g}'\in G/H$, the sub-ring $\tilde{R}[v_{\overline{g}'},f_{\overline{g},\overline{g}'}]$
of $\tilde{R}_{x}[v_{\overline{g}'}]$ generated by the union of $\tilde{R}[v_{\overline{g}'}]$
and $\xi_{\overline{g},\overline{g}'}^{*}(\tilde{R}[v_{\overline{g}}])$
is equal to $\tilde{R}_{x}[v_{\overline{g}'}]$. This holds if and
only if $\tilde{R}[f_{\overline{g},\overline{g}'}]=\tilde{R}_{x}$
whence if and only if $f_{\overline{g},\overline{g}'}\in\tilde{R}_{x}$
has the form $f_{\overline{g},\overline{g}'}=x^{-m_{\overline{g},\overline{g}'}}F_{\overline{g},\overline{g}'}$
for a certain $m_{\overline{g},\overline{g}'}>1$ and an element $F_{\overline{g},\overline{g}'}\in\tilde{R}$
with invertible residue class modulo $x$. 

This additional information enables a proof of the affineness of $\tilde{\mathfrak{X}}$
by induction on $r$ as follows: given a pair of distinct elements
$\overline{g},\overline{g}'\in G/H$ such that $m_{\overline{g},\overline{g}'}=m>0$
is maximal, we let $\theta_{\overline{g}}=0$ and $\theta_{\overline{g}''}=x^{m-m_{\overline{g},\overline{g}''}}F_{\overline{g},\overline{g}''}\in\tilde{R}$
for every $\overline{g}''\in(G/H)\setminus\{\overline{g}\}$. The
choice of the elements $\theta_{\overline{g}''}\in\tilde{R}$ guarantees
that the local sections 
\[
\psi_{\overline{g}''}=x^{m}v_{\overline{g}''}+\theta_{\overline{g}''}\in\Gamma(\tilde{\mathfrak{X}}_{\overline{g}''},\mathcal{O}_{\tilde{\mathfrak{X}}}),\quad\overline{g}''\in G/H
\]
glue to a global regular function $\psi\in\Gamma(\tilde{\mathfrak{X}},\mathcal{O}_{\tilde{\mathfrak{X}}})$.
Since $\theta_{\overline{g}'}=F_{\overline{g},\overline{g}'}$ is
invertible modulo $x$, the regular functions $x$, $\psi$ and $\psi-\theta_{\overline{g}'}$
generate the unit ideal in $\Gamma(\tilde{\mathfrak{X}},\mathcal{O}_{\tilde{\mathfrak{X}}})$.
The principal open subset $\tilde{\mathfrak{X}}_{x}$ of $\tilde{\mathfrak{X}}$
is isomorphic to $\tilde{\mathfrak{X}}_{\overline{g},x}\simeq\mathrm{Spec}(\tilde{R}_{x}[v_{\overline{g}}])$
for every $\overline{g}\in G/H$, hence is affine. On the other hand,
$\tilde{\mathfrak{X}}_{\psi}$ and $\tilde{\mathfrak{X}}_{\psi-\theta_{\overline{g}'}}$
are contained respectively in the open sub-schemes $\tilde{\mathfrak{X}}(\overline{g})$
and $\tilde{\mathfrak{X}}(\overline{g}')$ obtained by gluing only
the $r-1$ open subsets $\tilde{\mathfrak{X}}_{\overline{g}''}$ corresponding
to the elements $\overline{g}''$ in $\left(G/H\right)\setminus\{\overline{g}\}$
and $\left(G/H\right)\setminus\{\overline{g}'\}$ respectively. By
the induction hypothesis, the latter are both affine and hence $\tilde{\mathfrak{X}}_{\psi}$
and $\tilde{\mathfrak{X}}_{\psi-\theta_{\overline{g}'}}$ are affine
as well. This shows that $\tilde{\mathfrak{X}}$ is an affine scheme
and completes the proof. 
\end{proof}
\bibliographystyle{amsplain}

\end{document}